\theoremstyle{plain}
\newtheorem{thm}{Thm}[section]
\newtheorem{theorem}[thm]{Theorem}
\newtheorem{lemma}[thm]{Lemma}
\newtheorem{corollary}[thm]{Corollary}
\newtheorem{proposition}[thm]{Proposition}
\newtheorem{problem}[thm]{Problem}
\newtheorem{observation}[thm]{Observation}
\definecolor{lightgreen}{RGB}{153,255,153}
\definecolor{lightyellow}{RGB}{255,252,187}
\definecolor{brightred}{RGB}{255,80,102}
\definecolor{lightpurple}{RGB}{255,204,255}
\newenvironment{proof*}{\noindent\emph{Proof of the claim:}}{\hfill$\Diamond$}
\newcommand{\spto}{\ensuremath{\stackrel{s.p.}{\longrightarrow}}}
\newcommand{\mad}{{\rm mad}}
\tikzstyle{vertex}=[circle, draw, fill=black!50,
\tikzset{->-/.style={decoration={
			markings,
			mark=at position .5 with {\arrow{>}}},postaction={decorate}}}
\tikzstyle{bigblue}=[color=blue, very thick, >=stealth]
\tikzstyle{lightblue}=[color=blue, thin, >=stealth]
\tikzstyle{bigred}=[color=red, very thick, >=stealth]
\tikzstyle{lightred}=[color=red, thin, >=stealth]
\tikzstyle{biggreen}=[color=black!30!green, very thick, >=stealth]
\tikzstyle{lightgreen}=[color=black!30!green,  thin, >=stealth]
\begin{document}
	
	\title{Mapping sparse signed graphs to  $(K_{2k}, M)$}

	\author[1]{Reza Naserasr} 
	\author[2]{Riste \v{S}krekovski}
	\author[1]{Zhouningxin Wang}
	\author[1,3]{Rongxing Xu} 
	
	\affil[1]{Universit\'{e} de Paris, CNRS, IRIF, F-75006, Paris, France. E-mail addresses: \{reza, wangzhou4\}@irif.fr.}
	\affil[2]{Faculty of Mathematics and Physics, University of Ljubljana, Ljubljana, Slovenia. Email address: skrekovski@gmail.com.}
	\affil[3]{Department of Mathematics, Zhejiang Normal University, Jinhua, China. Email address: xurongxing@yeah.net.}
	
	
	
	\maketitle
	
\begin{abstract}
A homomorphism of a signed graph $(G, \sigma)$ to $(H, \pi)$ is a mapping of vertices and edges of $G$ to (respectively) vertices and edges of $H$ such that adjacencies, incidences and the product of signs of closed walks are preserved.
Motivated by reformulations of the $k$-coloring problem in this language, and specially in connection with results on $3$-coloring of planar graphs, such as  Gr\"otzsch's theorem, in this work we consider bounds on maximum average degree which are sufficient for mapping to the signed graph $(K_{2k}, \sigma_m)$ ($k\geq 3$) where $\sigma_m$ assigns to edges of a perfect matching the negative sign. For $k=3$, we show that the maximum average degree strictly less than $\frac{14}{5}$ is sufficient and that this bound is tight. For all values of $k\geq 4$, we find the best maximum average degree bound to be 3. 

While the homomorphisms of signed graphs is relatively new subject, through the connection with the homomorphisms of $2$-edge-colored graphs, which are largely studied, some earlier bounds are already given. In particular, it is implied from Theorem 2.5 of ``Borodin, O. V., Kim, S.-J., Kostochka, A. V., and West, D. B., Homomorphisms from sparse graphs with large girth. J. Combin. Theory Ser. B (2004)" that if $G$ is a graph of girth at least 7 and maximum average degree $\frac{28}{11}$, then for any signature $\sigma$ the signed graph $(G,\sigma)$ maps to $(K_6, \sigma_m)$. 

We discuss applications of our work to signed planar graphs and, among others, we propose questions similar to Steinberg's conjecture for the class of signed bipartite planar graphs.

	\end{abstract}
	
	\section{Introduction}\label{Int}

	A \emph{signed graph} $(G, \sigma)$ is a graph $G$ together with an assignment $\sigma$ which assigns to each edge one of the two signs: positive $(+)$ or negative $(-)$. Given a signed graph on $G$ where $E^-$ is the set of negative edges, we may equivalently use $(G, E^-)$ to denote this signed graph. When presenting signed graph in figures, we use solid or blue lines to represent positive edges and dashed or red lines to represent negative edges. For (underlying) graphs, we use gray color. A \emph{subgraph} of signed graph $(G, \sigma)$ is a subgraph $H$ of $G$ together with the signature induced by $\sigma$ on $H$. For simplicity, such a subgraph will be denoted by $(H, \sigma)$. 
	
	Given a signed graph $(G, \sigma)$ and an edge cut $(X, V(G)\setminus X)$, a \emph{switching} at $X$ is to multiply the sign of each edge in the cut by a $-$. 
	When $X=\{v\}$, we may simply say a switching at $v$. A switching at $X$ is the result as switching at all vertices of $X$.
    Two signed graphs $(G, \sigma)$ and $(G, \sigma')$ are said to be \emph{switching equivalent} if one is obtained from the other by a switching.
	It is easily observed that this is an equivalence relation on the set of all signatures on a given graph $G$. 
	A cycle or a closed walk of $G$ is said to be \emph{positive} in $(G, \sigma)$ if the product of signs of all its edges (considering multiplicity) is positive, it is said to be \emph{negative} otherwise. Observe that the sign of a cycle or a closed walk after a switching (on a set of vertices) remains the same. The converse is also true in the following sense.
	
	\begin{theorem}{\em \cite{Za82}}\label{thm:Zaslavsky}
		Signed graphs $(G, \sigma)$ and $(G, \sigma')$ are switching equivalent if and only if they have a same set of positive cycles.
	\end{theorem}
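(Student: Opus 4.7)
The forward direction (switching equivalent implies the same family of positive cycles) is essentially immediate and is already observed in the excerpt: a switching at a set $X$ flips the sign of an edge $uv$ exactly when $|\{u,v\} \cap X| = 1$, and every cycle crosses the cut $(X, V(G)\setminus X)$ an even number of times, so the product of signs along the cycle is invariant. The real content is the converse, which I would handle as follows.

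My plan is to reduce to showing a single useful claim: if every cycle of a signed graph $(G,\tau)$ is positive, then $(G,\tau)$ is switching equivalent to the all-positive signed graph on $G$. Given this, define the \emph{difference signature} $\tau(e):= \sigma(e)\sigma'(e)$; the hypothesis that $(G,\sigma)$ and $(G,\sigma')$ have the same positive cycles translates precisely to the statement that every cycle of $(G,\tau)$ is positive. A switching set $X$ that turns $\tau$ into the all-positive signature also turns $\sigma'$ into $\sigma$, since the switching operation multiplies the signature of each edge in the cut by $-1$, and $\sigma'(e)\tau(e)=\sigma(e)$ by construction.

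To prove the reduction claim, I would assume $G$ is connected (handling components separately), fix a root $v_0 \in V(G)$, and define a potential function $s: V(G) \to \{+1,-1\}$ by declaring $s(u)$ to be the sign (under $\tau$) of any walk from $v_0$ to $u$. The crucial step is to verify $s$ is well-defined; this amounts to proving that \emph{every closed walk in $(G,\tau)$ is positive}, since the ratio of the signs of two $v_0u$-walks is the sign of the closed walk formed by concatenating one with the reverse of the other. I would establish this by induction on the length of the closed walk $W$: if $W$ is a simple cycle, we are done by hypothesis, and otherwise $W$ has a repeated vertex that splits it into two shorter closed walks whose signs multiply to give the sign of $W$. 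Once $s$ is well-defined, for any edge $uv$ we obtain $\tau(uv) = s(u)s(v)$ by extending a $v_0u$-walk through the edge $uv$, so switching at $X:=\{u: s(u)=-1\}$ flips $\tau(uv)$ precisely when $s(u)s(v)=-1$, i.e. precisely when $\tau(uv)=-1$, producing the all-positive signature.

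The main obstacle I anticipate is a clean execution of the well-definedness argument for $s$, since closed walks may repeat edges and thus do not literally decompose into edge-disjoint cycles; the inductive splitting at a repeated vertex is the right tool but must be stated carefully so that the base case (a closed walk with no repeated internal vertex being a cycle) really applies. Everything else is bookkeeping.
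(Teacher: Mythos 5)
Your argument is correct. Note that the paper does not prove this statement at all---it is quoted as Zaslavsky's theorem with a citation to \cite{Za82}---so there is no in-paper proof to compare against. Your route (pass to the difference signature $\tau=\sigma\sigma'$, show that an all-positive-cycle signature admits a well-defined vertex potential $s$ via the closed-walk induction, and switch at $\{u: s(u)=-1\}$) is essentially the standard proof; the only common variant is to fix a spanning tree, switch so that all tree edges become positive, and then read off the sign of each non-tree edge from its fundamental cycle, which sidesteps the well-definedness-of-$s$ bookkeeping you flag as the delicate step. Your handling of that step is sound provided the base case explicitly covers the degenerate closed walk $u$-$v$-$u$ traversing one edge twice (automatically positive), which you already anticipate.
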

	
	Considering the parity of the length of a closed walk and the sign of it, there are four possible types of closed walks: type $00$ is a closed walk which is positive and of even length, type $01$ is positive and odd, type $10$ is negative and even, finally type $11$ is negative and odd. The length of a shortest nontrivial closed walk in $(G, \sigma)$ of type $ij$, ($ij\in \mathbb{Z}_2^2$), is denoted by $g_{ij}(G, \sigma)$. We write $g_{ij}(G, \sigma) =\infty$ when there is no closed walk of type $ij$ in $(G,\sigma)$.
	
	A \emph{homomorphism} of a signed graph $(G, \sigma)$ to a signed graph $(H, \pi)$, also referred to as \emph{switch homomorphism}, is a mapping of vertices and edges of $G$ to the vertices and edges of $H$ such that adjacencies, incidences and signs of closed walks are preserved. When there exists such a homomorphism, we write $(G, \sigma) \to (H, \pi)$.
	An immediate corollary of the definition is the following no-homomorphism lemma.
	
	\begin{lemma}\label{lem:Nohom}
		If $(G, \sigma) \to (H, \pi)$, then $g_{ij}(G, \sigma) \geq g_{ij}(H, \pi)$ for $ij\in \mathbb{Z}_2^2$.
	\end{lemma}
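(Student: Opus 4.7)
The plan is to verify the inequality directly from the three preservation properties in the definition of a homomorphism. Fix $ij \in \mathbb{Z}_2^2$. If $g_{ij}(G, \sigma) = \infty$, there is nothing to prove, so assume there exists a nontrivial closed walk $W = v_0 e_1 v_1 e_2 v_2 \cdots e_\ell v_\ell$ in $(G, \sigma)$ of type $ij$ whose length $\ell$ equals $g_{ij}(G, \sigma)$.

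Let $\varphi$ be a homomorphism from $(G, \sigma)$ to $(H, \pi)$. Since $\varphi$ preserves adjacencies and incidences, the sequence
\[
\varphi(v_0)\, \varphi(e_1)\, \varphi(v_1)\, \varphi(e_2)\, \varphi(v_2) \cdots \varphi(e_\ell)\, \varphi(v_\ell)
\]
is a closed walk $W'$ in $H$ of the \emph{same} length $\ell$, because each edge of $W$ contributes exactly one edge to $W'$ and $\varphi(v_\ell) = \varphi(v_0)$. In particular, the parities of the lengths of $W$ and $W'$ coincide, which fixes the first coordinate of the type. Because $\varphi$ preserves the signs of closed walks (the third stipulation in the definition), the sign of $W'$ in $(H, \pi)$ equals the sign of $W$ in $(G, \sigma)$, which fixes the second coordinate of the type. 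Hence $W'$ is a closed walk of type $ij$ in $(H, \pi)$, which gives
\[
g_{ij}(H, \pi) \;\leq\; \ell \;=\; g_{ij}(G, \sigma).
\]

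There is no real obstacle here: the entire content of the lemma is that each of the four invariants $g_{ij}$ is monotone under homomorphism, and this monotonicity is built into the definition. The only small subtlety worth making explicit is that $W'$ is indeed \emph{nontrivial} as a closed walk of the desired type (i.e. has positive length $\ell \geq 1$), which is automatic since $\ell = g_{ij}(G, \sigma) \geq 1$ and $\varphi$ does not shrink walks. Consequently the lemma follows by applying this argument to each of the four types $ij \in \mathbb{Z}_2^2$ separately.
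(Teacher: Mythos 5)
Your argument is correct and is precisely the "immediate corollary of the definition" that the paper invokes without writing out: push a shortest closed walk of type $ij$ through the homomorphism, note that length (hence parity) and sign are preserved, and conclude $g_{ij}(H,\pi)\leq g_{ij}(G,\sigma)$. Since the paper gives no explicit proof, your write-up simply makes the intended one-line argument explicit; there is nothing to add or correct.
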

	
	Thus $g_{ij}(G, \sigma) \geq g_{ij}(H, \pi)$ is a necessary condition for the existence of a homomorphism of $(G, \sigma)$ to $(H, \pi)$. The following question then is one of the most central questions in graph theory.
	
	\begin{problem}\label{prob:NecessarySufficient?}
		Given a signed graph $(H, \pi)$, under which structural condition on $G$ the necessary conditions of Lemma~\ref{lem:Nohom} becomes sufficient?
	\end{problem}
	
	For example, the Four-Color Theorem states that for $(K_4, \emptyset)$ with structural condition of ``planarity", the necessary conditions of the no-homomorphism lemma are also sufficient. The conditions of no-homomorphism lemma hold as long as $(G, \sigma)$ has no loop and no negative cycle, in which case any proper $4$-coloring of $G$ (which for the planar graphs is provided by the Four-Color Theorem) can be regarded as a homomorphism of $(G, \sigma)$ to $(K_4, \emptyset)$. We will discuss a strengthening of this particular case which is the main motivation of our work.
	On the other hand, for $(K_3, \emptyset)$, even with the extra condition of planarity, not only the necessary conditions of the no-homomorphism lemma are not sufficient but it is expected to be far from it, as $3$-coloring of planar graphs is known to be an NP-hard problem in \cite{GJS76}. For such cases, two closely related conditions are considered, the first is having high girth for planar graphs and the second is to have low \emph{maximum average degree}. Recall that maximum average degree of a graph $G$, denoted by $\mad(G)$, is the maximum average degrees taken over all subgraphs of $G$. This approach is considered in \cite{CNS20} where it is shown that:
	
	\begin{theorem}
		Given a signed graph $(H, \pi)$, there exists an $\epsilon>0$ such that every signed graph $(G, \sigma)$, satisfying $g_{ij}(G, \sigma) \geq g_{ij}(H, \pi)$ and $\mad(G)< 2+\epsilon$, admits a homomorphism to $(H, \pi)$.
	\end{theorem}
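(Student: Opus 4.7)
The plan is to proceed by induction on the number of vertices of $(G, \sigma)$ via a minimum counterexample, using the sparseness hypothesis $\mad(G) < 2 + \epsilon$ to locate a reducible configuration, and extending a smaller homomorphism via a walk-existence property of the target $(H, \pi)$. The heart of the argument is the following \emph{walk lemma}: there is an integer $L = L(H, \pi)$ such that for any two vertices $x, y \in V(H)$ and any type $(a, b) \in \mathbb{Z}_2^2$ for which \emph{some} $(x,y)$-walk of type $(a, b)$ exists in $(H, \pi)$, every integer $\ell \geq L$ of the corresponding parity is realized as the length of some such $(x, y)$-walk of type $(a, b)$. I would establish this by considering the auxiliary graph on $V(H) \times \mathbb{Z}_2^2$ where each edge of $H$ induces a transition updating the parity-sign label, observing that closed walks through any vertex of this auxiliary graph exist of all sufficiently large lengths (via a standard Schur/Frobenius gcd argument once both feasible parities are available), and uniformizing $L$ over the finitely many pairs $(x, y)$ and types.

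Having fixed $L$, I would choose $\epsilon > 0$ small enough that $\mad(G) < 2 + \epsilon$ forces $G$ to contain either a vertex of degree at most $1$ or a thread (maximal path whose internal vertices all have degree $2$) of length at least $L$. This follows from a standard discharging argument: assign each vertex $v$ initial charge $d(v) - 2 - \epsilon$; if the minimum degree is at least $2$ and every thread has length less than $L$, redistributing charge from high-degree vertices outward along the short threads produces a nonnegative total charge, contradicting $\mad(G) < 2 + \epsilon$ provided $\epsilon < c/L$ for a suitable constant $c$. Thus in a minimum counterexample, one of the two reducible configurations must appear.

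In the pendant case, remove $v$, apply induction to $G - v$ (which inherits all the hypotheses), and extend a homomorphism $\phi$ to $v$ by choosing any neighbor of $\phi(u)$ in $(H, \pi)$ with edge sign equal to $\sigma(uv)$, switching at $v$ first if necessary (this changes only the sign of the single edge $uv$). In the thread case, remove the interior vertices $v_1, \ldots, v_k$ with $k \geq L$, obtain a homomorphism $\phi : G' \to (H, \pi)$ on $G' := G - \{v_1, \ldots, v_k\}$ by induction---$\mad$ and each $g_{ij}$ are monotone under taking subgraphs, so the hypotheses survive---and invoke the walk lemma with $x = \phi(u)$, $y = \phi(w)$, length $\ell = k+1$, and type determined by the parity and sign of the removed thread in $(G, \sigma)$. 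The main obstacle is verifying that this type is \emph{feasible} between $\phi(u)$ and $\phi(w)$: here the girth hypothesis $g_{ij}(G, \sigma) \geq g_{ij}(H, \pi)$ enters decisively, since a type forbidden in $(H, \pi)$ is automatically forbidden in $(G, \sigma)$ and so cannot occur along our thread. Additional care is required when $G'$ is disconnected (treat components separately) and in degenerate situations where $(H, \pi)$ has type-components that are trivial; these cases are handled by choosing $\phi$ component-by-component and appealing to the walk lemma within the relevant component of the auxiliary graph.
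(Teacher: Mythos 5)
You should first be aware that the paper does not prove this statement: it is quoted from \cite{CNS20}, so there is no internal proof to compare yours against. Your architecture --- a walk-length realizability lemma for the target, proved on a sign-tracking auxiliary graph via a gcd/Frobenius argument, combined with a discharging step showing that $\mad(G)<2+\epsilon$ forces either a vertex of degree at most $1$ or a thread of length at least $L$ --- is the standard route for theorems of this ``$2+\epsilon$'' type and is very plausibly the one taken in the cited source. The walk lemma itself is correct as you state it.

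Two steps, however, need repair before the sketch is a proof. First, your structural dichotomy is incomplete: a connected graph with minimum degree $2$ and no vertex of degree at least $3$ is a cycle, which contains no thread in your sense, so cycle components must be added as a third reducible configuration (easily handled: a cycle of type $ij$ and length $\ell$ satisfies $\ell\ge g_{ij}(G,\sigma)\ge g_{ij}(H,\pi)$, so take a shortest closed walk of type $ij$ in $(H,\pi)$ and pad it to length $\ell$ by back-and-forth steps of length $2$). Second, the feasibility step is where the real work hides, and your justification --- ``a type forbidden in $(H,\pi)$ is automatically forbidden in $(G,\sigma)$ and so cannot occur along our thread'' --- is not literally meaningful, because the hypotheses $g_{ij}(G,\sigma)\ge g_{ij}(H,\pi)$ constrain types of \emph{closed} walks, not of walks between two prescribed vertices. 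The correct argument closes the thread $P$ up with a $u$--$w$ path $Q$ inside $G'$, notes that the resulting cycle of $G$ has some type $ij$ with $g_{ij}(H,\pi)\le g_{ij}(G,\sigma)\le\ell(P)+\ell(Q)<\infty$, so $(H,\pi)$ contains a closed walk of type $ij$ \emph{somewhere}, then conjugates that closed walk by a connecting path to base it at $\phi(w)$ (the conjugation preserves both sign and parity), and finally concatenates with the image of $Q$ to witness the required type between $\phi(u)$ and $\phi(w)$. This is exactly where the cases you defer in one sentence become substantive: the conjugation needs $\phi(u)$, $\phi(w)$ and the witnessing closed walk to lie in a common component of $H$; when $u$ and $w$ lie in different components of $G'$ (the thread is a bridge) there is no path $Q$ and you may need to recolor one side; and for disconnected $H$ the statement only holds component by component. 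Finally, since the sign of a path is not a switching invariant, the phrase ``type determined by the parity and sign of the removed thread'' should be made precise by first passing to an edge-sign-preserving representative via Theorem~\ref{thm:Sign-preserving} (or the ${\rm DSG}$ construction of Section~\ref{sec:DSG}) and only then reading off the required sign of the connecting walk.
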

	
	A main question then is to find the best value of $\epsilon$ for a given signed graph $(H, \pi)$. In \cite{CNS20}, the best value of $\epsilon=\frac{4}{7}$ is proved for $(K_4, e)$ where only one edge is negative. In this work, then we find the best value for two classes of signed graphs: $(K_{2n}, M)$ and $(K_{n, n}, M)$ where $M$ is a perfect matching of the graph under consideration. 
	
	In Section~\ref{sec:HomomorphismToKkk}, we will explain our motivation of choosing these two families of signed graphs, noting that $n=3$ is of special importance and is the main case of the difficulty in this work.
	
	\subsection{Edge-sign preserving homomorphism and Double Switching Graph}\label{sec:DSG}
	
	A homomorphism $\phi$ of $(G, \sigma)$ to $(H, \pi)$ is said to be \emph{edge-sign preserving} if edge $\phi(u)\phi(v)$ of $(H, \pi)$ has the same sign as $uv$ for each edge $uv$ of $(G, \sigma)$. We write $(G, \sigma) \spto (H, \pi)$ to denote the existence of an edge-sign preserving homomorphism of $(G, \sigma)$ to $(H, \pi)$. When we consider the edge-sign preserving homomorphism, we also say a path is \emph{positive} if the product of signs of all its edges is positive, it is \emph{negative} otherwise.

	\begin{theorem}\emph{\cite{NSZ21}}\label{thm:Sign-preserving}
		For signed graphs $(G, \sigma)$ and $(H,\pi)$, $(G, \sigma)\to (H, \pi)$ if and only if there exists a switching equivalent $(G, \sigma')$ of $(G, \sigma)$ such that $(G, \sigma')\spto (H, \pi)$.   
	\end{theorem}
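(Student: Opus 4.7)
The plan is to prove both directions directly from the definitions, with the forward direction relying crucially on Zaslavsky's theorem (Theorem~\ref{thm:Zaslavsky}).

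For the easy (``if'') direction, I will first observe that an edge-sign preserving homomorphism is, in particular, a homomorphism in the sense of preserving closed-walk signs: if every edge maps to an edge of the same sign, then the product of edge signs along any closed walk is preserved. Thus $(G,\sigma')\spto(H,\pi)$ gives $(G,\sigma')\to(H,\pi)$. It then suffices to argue that the relation $\to$ respects switching equivalence classes on the source, i.e., if $(G,\sigma)$ and $(G,\sigma')$ are switching equivalent then $(G,\sigma)\to(H,\pi) \Leftrightarrow (G,\sigma')\to(H,\pi)$. This is a standard observation since switching equivalent signatures share the same set of positive (hence also negative) cycles by Theorem~\ref{thm:Zaslavsky}, and a homomorphism cares only about signs of closed walks.

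For the nontrivial (``only if'') direction, the construction is natural. Starting from a homomorphism $\phi:(G,\sigma)\to(H,\pi)$ (vertex- and edge-map), I will define a new signature $\sigma'$ on $G$ by pulling back $\pi$ through $\phi$: for each edge $e$ of $G$, set $\sigma'(e):=\pi(\phi(e))$. By construction, $\phi$ regarded as a map from $(G,\sigma')$ to $(H,\pi)$ is edge-sign preserving, so $(G,\sigma')\spto(H,\pi)$ is immediate. The entire content of the proof is therefore to show $(G,\sigma')$ is switching equivalent to $(G,\sigma)$.

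To show this, I will invoke Theorem~\ref{thm:Zaslavsky} and verify that $(G,\sigma)$ and $(G,\sigma')$ have the same positive cycles. Fix any cycle $C=v_1v_2\dots v_\ell v_1$ of $G$. On the one hand, the sign of $C$ in $(G,\sigma')$ equals $\prod_i \sigma'(v_iv_{i+1})=\prod_i \pi(\phi(v_iv_{i+1}))$, which is precisely the sign of the closed walk $\phi(C)$ in $(H,\pi)$. On the other hand, because $\phi$ is a homomorphism of signed graphs, the sign of $\phi(C)$ in $(H,\pi)$ equals the sign of $C$ in $(G,\sigma)$. Combining these two identities gives equality of the signs, hence equality of the sets of positive cycles, and Theorem~\ref{thm:Zaslavsky} yields $\sigma\sim\sigma'$.

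The only subtle point—where one must be slightly careful—is that ``homomorphism'' here is defined to preserve signs of \emph{closed walks}, not merely of cycles; but this is the stronger hypothesis, so it is exactly what is needed to identify the sign of the image of a cycle with the sign of that cycle. No obstacle remains beyond this bookkeeping, and no structural assumption on $G$ or $H$ is used.
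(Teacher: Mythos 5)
Your proof is correct. The paper itself imports this statement from \cite{NSZ21} without proof, but your argument---pulling back $\pi$ through $\phi$ to define $\sigma'$, noting the map is then trivially edge-sign preserving, and invoking Theorem~\ref{thm:Zaslavsky} on the signs of cycles to get switching equivalence---is precisely the argument the paper uses for the converse direction of the closely related Theorem~\ref{thm:Hom->Edge-SignPres}, so this is essentially the same approach.
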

	
	A strong relation between homomorphism of signed graphs and edge-sign preserving homomorphism of signed graphs is provided based on the following notion.
	
	Given a signed graph $(G,\sigma)$, the \emph{Double Switching Graph}, denoted ${\rm DSG}(G,\sigma)$, is a signed graph built as follows. If $V=\{x_1, x_2, \ldots, x_n\}$ is the vertex set of $G$, then we have two disjoint copies of it, $V^{+}=\{x_1^{+}, x_2^{+}, \ldots, x_n^{+}\}$ and $V^{-}=\{x_1^{-}, x_2^{-}, \ldots, x_n^{-}\}$ in ${\rm DSG}(G,\sigma)$.
	Each set of vertices then induces a copy of $(G, \sigma)$, furthermore, a vertex $x_i^{-}$ connects to vertices in $V^{+}$ as it is obtained from a switching on $x_i$, more precisely, if $x_ix_j$ is a positive (negative) edge in $(G, \sigma)$, then $x_i^{+}x_j^{+}$ and $x_i^{-}x_j^{-}$ are positive (negative) edges in ${\rm DSG}(G, \sigma)$, and $x_i^{+}x_j^{-}$ and $x_i^{-}x_j^{+}$ are negative (positive) edges in ${\rm DSG}(G, \sigma)$.
	
	The connection, shown in Theorem~\ref{thm:Hom->Edge-SignPres}, was originally proved in \cite{BG09}. This connection comes through an intermediary definition of switching homomorphism. The terminology is from \cite{NSZ21}. For the sake of completeness, we give a direct proof from the definition we work with.
	
	\begin{theorem}\label{thm:Hom->Edge-SignPres}
		Given signed graphs $(G, \sigma)$ and $(H, \pi)$, we have $(G, \sigma) \to (H, \pi)$ if and only if  $(G, \sigma) \spto {\rm DSG}(H,\pi)$.
	\end{theorem}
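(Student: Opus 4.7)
The strategy is to exploit Theorem~\ref{thm:Sign-preserving} together with the combinatorial structure of ${\rm DSG}(H,\pi)$: a vertex of ${\rm DSG}(H,\pi)$ is a pair $(u,\varepsilon)$ with $u\in V(H)$ and $\varepsilon\in\{+,-\}$, and by construction the edge between $(u,\varepsilon_1)$ and $(v,\varepsilon_2)$ is positive in ${\rm DSG}(H,\pi)$ if and only if $\pi(uv)\cdot \varepsilon_1\varepsilon_2=+$. In other words, the second coordinate behaves exactly like a ``switched/not switched'' marker on top of $(H,\pi)$. This observation is what converts switching on the domain into a choice of copy on the codomain.

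For the forward direction, assume $(G,\sigma)\to(H,\pi)$. By Theorem~\ref{thm:Sign-preserving} there is a switching set $X\subseteq V(G)$ producing $(G,\sigma')$ with $(G,\sigma')\spto(H,\pi)$; let $\phi$ be such an edge-sign preserving homomorphism. Define $\psi:V(G)\to V({\rm DSG}(H,\pi))$ by
\[
\psi(v)=\phi(v)^{+} \text{ if } v\notin X, \qquad \psi(v)=\phi(v)^{-} \text{ if } v\in X.
\]
For an edge $uv$ of $(G,\sigma)$, one checks by splitting on $|\{u,v\}\cap X|\in\{0,1,2\}$ that $\sigma'(uv)=\sigma(uv)$ when $|\{u,v\}\cap X|$ is even and $\sigma'(uv)=-\sigma(uv)$ otherwise, and that the sign of $\psi(u)\psi(v)$ in ${\rm DSG}(H,\pi)$ equals $\pi(\phi(u)\phi(v))$ times the sign flip induced by the second coordinates. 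Combining these two identities with the fact that $\phi$ preserves edge signs from $(G,\sigma')$ to $(H,\pi)$ yields that $\psi$ is edge-sign preserving from $(G,\sigma)$ to ${\rm DSG}(H,\pi)$.

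For the backward direction, suppose $\psi:(G,\sigma)\spto{\rm DSG}(H,\pi)$, and write $\psi(v)=\phi(v)^{s(v)}$ with $s(v)\in\{+,-\}$. Let $X=\{v\in V(G):s(v)=-\}$ and let $(G,\sigma')$ be obtained from $(G,\sigma)$ by switching at $X$. The same case analysis as above, read in reverse, shows that $\phi:V(G)\to V(H)$ is edge-sign preserving from $(G,\sigma')$ to $(H,\pi)$: the sign of $\psi(u)\psi(v)$ in ${\rm DSG}(H,\pi)$ is $\sigma(uv)$ by hypothesis, and it also equals $\pi(\phi(u)\phi(v))$ multiplied by $-1$ exactly when precisely one of $u,v$ is in $X$, which is the same as $\sigma'(uv)/\sigma(uv)$. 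Hence $(G,\sigma')\spto(H,\pi)$, and Theorem~\ref{thm:Sign-preserving} gives $(G,\sigma)\to(H,\pi)$.

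The only nontrivial step is the bookkeeping of signs, which is a mechanical three-case check; the main conceptual point to get right is that the $\pm$-marker on the codomain and the switching set on the domain are two encodings of the same piece of information, which makes the two constructions above mutual inverses.
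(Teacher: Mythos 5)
Your proof is correct, but it routes both implications through Theorem~\ref{thm:Sign-preserving}, whereas the paper's proof is deliberately more self-contained. In the direction $(G,\sigma)\to(H,\pi)\Rightarrow(G,\sigma)\spto{\rm DSG}(H,\pi)$, your construction of $\psi$ from the switching set $X$ is essentially identical to the paper's; the difference is how $X$ is produced. You quote Theorem~\ref{thm:Sign-preserving} to obtain a switching-equivalent $\sigma'$ admitting an edge-sign preserving homomorphism, while the paper pulls back $\pi$ along the given homomorphism $\phi$ to define $\sigma'$ directly and then invokes Zaslavsky's theorem (Theorem~\ref{thm:Zaslavsky}) to certify that $\sigma'$ and $\sigma$ differ by a switching --- in effect re-deriving the piece of Theorem~\ref{thm:Sign-preserving} it needs, consistent with the paper's stated aim of giving ``a direct proof from the definition we work with.'' In the converse direction your arguments genuinely diverge: you decode the $\pm$-marker of $\psi$ into a switching set and again appeal to Theorem~\ref{thm:Sign-preserving}, whereas the paper simply exhibits an explicit homomorphism ${\rm DSG}(H,\pi)\to(H,\pi)$ (folding $V^-$ onto $V^+$) and composes, which needs nothing beyond the fact that homomorphisms compose. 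Your version buys uniformity --- both directions are the same three-case sign computation read forwards and backwards, and the key identity $m^*\bigl(x_i^{\varepsilon_1}x_j^{\varepsilon_2}\bigr)=\pi(x_ix_j)\cdot\varepsilon_1\varepsilon_2$ is made explicit, which the paper leaves implicit --- at the cost of making the theorem logically dependent on the imported Theorem~\ref{thm:Sign-preserving} in both directions rather than only where it is unavoidable.
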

	
	\begin{proof}
		An identity mapping of the subgraph of ${\rm DSG}(H,\pi)$ induced on $V^{+}$ can be extended to a mapping of ${\rm DSG}(H,\pi)$ to $(H,\pi)$ as follows. Each vertex $x_i^{-}$ is mapped to $x_i^{+}$, each edge $x_i^{+}x_j^{-}$ is mapped to an edge $x_i^{+}x_j^{+}$ of opposite sign and each edge $x_i^{-}x_j^{-}$ is mapped to an $x_i^{+}x_j^{+}$ of a same sign. It is easily verified that this mapping is a homomorphism of ${\rm DSG}(H,\pi)$ to $(H,\pi)$. Thus any edge-sign preserving homomorphism of $(G,\sigma)$ to ${\rm DSG}(H,\pi)$ induces a homomorphism of $(G, \sigma)$ to $(H, \pi)$.
		
		For the inverse, assume $(G, \sigma)$ maps to $(H, \pi)$ and let $\phi$ be such a mapping. Let $\sigma'$ be a signature on $G$ where the sign of each edge $uv$ is the same as the sign of $\phi(u)\phi(v)$.  Then, clearly, the image of each cycle $C$ of $G$ in $(H, \phi)$ is a closed walk whose sign (with respect to $\pi$) is the same as the sign of $C$ in $(G, \sigma')$. As this is also the case for $(G, \sigma)$ (by our definition of homomorphism), it follows from Theorem~\ref{thm:Zaslavsky} that $\sigma'$ is switching equivalent to $\sigma$. Thus there is a set $X$ of vertices such that $\sigma'$ is obtained from $\sigma$ by a switching at $X$. We modify $\phi$ to a mapping $\psi$ of $(G, \sigma)$ to ${\rm DSG}(H,\pi)$ as follows. If $v\notin X$, and $\phi(v)=x_i$, then $\psi(v)=x_i^{+}$. If $v\in X$, and $\phi(v)=x_i$, then $\psi(v)=x_i^{-}$. One may now easily verify that $\psi$ is an edge-sign preserving homomorphism of $(G, \sigma)$ to ${\rm DSG}(H,\pi)$.	
	\end{proof}

	\section{Homomorphism to $(K_{k,k}, M)$ and $(K_{2k}, M)$}\label{sec:HomomorphismToKkk}
	
	\subsection{Chromatic number and homomorphism to $(K_{k,k}, M)$}\label{sec:Kkk}
	
	The following construction is introduced in \cite{NRS15} to connect the notion of chromatic number of graphs to the homomorphism of signed bipartite graphs.
	
	Given a graph $G$, a signed graph $S(G)$ is built as follows: starting from the vertex set of $G$, for each edge $uv$ we add two more vertices $x_{uv}$ and $y_{uv}$ and join them to $u$ and $v$ (noting that the original edge $uv$ is not an edge of $S(G)$), finally for each $4$-cycle $ux_{uv}vy_{uv}$ we assign a negative sign to one of the edges. It is then proved in \cite{NRS15} that:
	
	\begin{theorem}\label{thm:X->S(G)}
		Given a graph $G$, it is bipartite if and only if $S(G)$ maps to $(K_{2,2}, e)$ (one negative edge) and it is $k$-colorable, $k\geq 3$, if and only if $S(G)\to (K_{k,k},M)$.  	
	\end{theorem}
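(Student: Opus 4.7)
The plan is to exploit two structural facts: $S(G)$ is bipartite with $V(G)$ as one part and the subdivision vertices $\{x_{uv},y_{uv}:uv\in E(G)\}$ as the other, while $(K_{k,k},M)$ and $(K_{2,2},e)$ are bipartite with parts $A=\{a_1,\dots,a_k\}$ and $B=\{b_1,\dots,b_k\}$. Any homomorphism must send parts to parts; by the $A\leftrightarrow B$ symmetry of the target we may assume $V(G)$ is mapped into $A$, and the coloring we extract will be $c(v)=i$ whenever $\phi(v)=a_i$.

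For the forward direction in the case $k\geq 3$, I would start from a proper $k$-coloring $c$ and set $\phi(v)=a_{c(v)}$. Since the substituted 4-cycles of $S(G)$ are pairwise edge-disjoint, Theorem~\ref{thm:Sign-preserving} allows me to switch the signature of $S(G)$ so that the unique negative edge of each 4-cycle $ux_{uv}vy_{uv}$ is $uy_{uv}$. For each $uv\in E(G)$ I then pick $j\notin\{c(u),c(v)\}$ (possible precisely because $k\geq 3$) and set $\phi(y_{uv})=b_{c(u)}$, $\phi(x_{uv})=b_j$. A direct check of the four edges of the 4-cycle shows that $uy_{uv}$ maps to the matching edge $a_{c(u)}b_{c(u)}$ (negative), while the three remaining edges map to non-matching edges of $K_{k,k}$ (positive), so $\phi$ is edge-sign-preserving and hence, by Theorem~\ref{thm:Sign-preserving}, a homomorphism. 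The $k=2$ case is identical, except that the unique negative edge of the target is a single edge $e$ of $K_{2,2}$ rather than a matching and no auxiliary index $j$ is needed.

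For the backward direction, suppose $\phi\colon S(G)\to (K_{k,k},M)$ is a homomorphism with $\phi(V(G))\subseteq A$, and set $c(v)=i$ whenever $\phi(v)=a_i$. If, for contradiction, $c(u)=c(v)=i$ for some edge $uv\in E(G)$, then the negative 4-cycle $ux_{uv}vy_{uv}$ would map to the closed walk $a_i\,\phi(x_{uv})\,a_i\,\phi(y_{uv})\,a_i$ of length $4$ in which every edge used is traversed exactly twice. Its sign is therefore a product of squared edge signs, hence positive, contradicting the definition of a homomorphism that signs of closed walks are preserved. So $c$ is a proper $k$-coloring of $G$ (a bipartition when $k=2$).

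The main obstacle is the need for a third color index $j$ in the forward direction, which forces $k\geq 3$ and pinpoints why $(K_{2,2},e)$ is the correct target in the bipartite case: the alternative $(K_{2,2},M)$ is switching-equivalent to the all-positive $K_{2,2}$ and therefore carries no sign information. Everything else reduces to routine sign-chasing once the placement of the negative edge in each 4-cycle has been normalized via Theorem~\ref{thm:Sign-preserving}.
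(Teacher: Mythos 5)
The paper gives no proof of Theorem~\ref{thm:X->S(G)} (it is quoted from \cite{NRS15}), so I can only judge your argument on its own terms. Your backward direction is correct, and the architecture of the forward direction (normalize the signature, exhibit an explicit edge-sign-preserving map, invoke Theorem~\ref{thm:Sign-preserving}) is the right one. The gap is the normalization step: it is \emph{not} true that one can switch $S(G)$ so that the unique negative edge of every $4$-cycle becomes $uy_{uv}$, and edge-disjointness of the $4$-cycles is not the relevant point. Whether the negative edge of the $4$-cycle on $uv$ lies on the $x$-path $ux_{uv}v$ or on the $y$-path $uy_{uv}v$ carries switching-invariant information: for a cycle $v_1v_2\cdots v_mv_1$ of $G$, the sign of the lifted $2m$-cycle $v_1x_{v_1v_2}v_2\cdots v_mx_{v_mv_1}v_1$ equals $(-1)^{t}$, where $t$ is the number of edges of the cycle whose negative edge sits on the $x$-path, and by Theorem~\ref{thm:Zaslavsky} this sign cannot be changed by switching. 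Concretely, take $G=K_3$ and choose the negative edges of $S(K_3)$ to be $1x_{12}$, $2x_{23}$, $1x_{13}$: the $6$-cycle $1x_{12}2x_{23}3x_{13}1$ is negative, whereas after your intended normalization it would be all-positive, so no such switching exists. Worse, running your assignment $\phi(y_{uv})=b_{c(u)}$, $\phi(x_{uv})=b_j$ on this example sends that $6$-cycle to an all-positive closed walk, so the sign of a closed walk is not preserved and $\phi$ is not a homomorphism.

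The repair is small but necessary. Either observe that the construction of $S(G)$ does not distinguish $x_{uv}$ from $y_{uv}$, so you may relabel the two degree-$2$ vertices of each $4$-cycle so that the negative edge is incident to $y_{uv}$ (after which a switching at the degree-$2$ vertex $y_{uv}$ alone places it at $uy_{uv}$); or, dispensing with normalization, put the ``matching'' colour on whichever of the two degree-$2$ vertices is incident to the negative edge, i.e.\ set $\phi(z)=b_{c(u)}$ or $b_{c(v)}$ for the degree-$2$ vertex $z$ carrying the negative edge (choosing the endpoint so that this negative edge maps onto a matching edge) and $\phi(z')=b_j$ with $j\notin\{c(u),c(v)\}$ for the other one. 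With that change each $4$-cycle maps edge-sign-preservingly, Theorem~\ref{thm:Sign-preserving} applies, and the rest of your argument, including the $k=2$ case and the backward direction, goes through.
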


	Thus the signed graphs $(K_{k,k},M)$ are of special importance in the study of homomorphism of signed graphs. 
	
	Using Theorem~\ref{thm:X->S(G)}, a restatement of the Four-Color Theorem is as follow: 
	
	\begin{theorem}{\rm [The Four-Color Theorem restated]} 
		For every planar (simple) graph $G$, $S(G)\to (K_{4,4},M)$. 
	\end{theorem}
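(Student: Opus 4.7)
The plan is to observe that this ``theorem'' is, as its label indicates, merely a repackaging: it follows immediately by combining Theorem~\ref{thm:X->S(G)} (the case $k=4$) with the classical Four-Color Theorem. Concretely, Theorem~\ref{thm:X->S(G)} asserts that for any graph $G$, we have the equivalence
\[
G \text{ is } 4\text{-colorable} \iff S(G) \to (K_{4,4}, M).
\]
The Four-Color Theorem supplies the left-hand side for every planar $G$, and so the right-hand side follows for every planar $G$.

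To make the implication concrete, given a proper $4$-coloring $c:V(G)\to\{1,2,3,4\}$ of a planar graph $G$, I would build the homomorphism $\psi:S(G) \to (K_{4,4}, M)$ as follows. Label the two sides of $K_{4,4}$ by $A=\{a_1,\dots,a_4\}$ and $B=\{b_1,\dots,b_4\}$, with $M=\{a_ib_i:i\in[4]\}$ negative. For each original vertex $v\in V(G)$ set $\psi(v)=a_{c(v)}$. For each edge $uv$ of $G$, the two subdivision vertices $x_{uv},y_{uv}$ must be sent to vertices of $B$ adjacent (in the appropriate sign pattern) to both $a_{c(u)}$ and $a_{c(v)}$; since $c(u)\neq c(v)$, any $b_j$ with $j\notin\{c(u),c(v)\}$ works on the positive-edge side, and the negative edge of the $4$-cycle $ux_{uv}vy_{uv}$ can be routed through a matching edge $a_ib_i$. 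A small case check confirms that the sign of each closed walk is preserved, which is all that is required.

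Thus the only genuine obstacle is the Four-Color Theorem itself; the purely combinatorial glue between ``$4$-colorability of $G$'' and ``$S(G)\to(K_{4,4},M)$'' has already been absorbed into Theorem~\ref{thm:X->S(G)}. In writing this up I would keep the proof to a single line — ``Immediate from Theorem~\ref{thm:X->S(G)} and the Four-Color Theorem.'' — because the point of the statement is rhetorical: it displays the Four-Color Theorem as an instance of the homomorphism-to-$(K_{k,k},M)$ framework, motivating the study of $(K_{2k},M)$ and the sparse-graph analogues pursued in the remainder of the paper.
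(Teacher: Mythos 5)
Your proposal is correct and is exactly the paper's (implicit) argument: the paper offers no separate proof, presenting the statement as an immediate consequence of Theorem~\ref{thm:X->S(G)} with $k=4$ together with the classical Four-Color Theorem. The explicit construction you sketch is just an unpacking of the forward direction of Theorem~\ref{thm:X->S(G)}, which the paper imports from \cite{NRS15}, so the one-line citation you propose matches the paper.
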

	
	Observing that when $G$ is planar, $S(G)$ is a signed bipartite planar graph (where in one part all vertices are of degree 2). The following strengthening of the Four-Color Theorem is given in \cite{NRS15} (proof of which is based on an edge-coloring result of B. Guenin which in turn is based on the Four-Color Theorem).

	\begin{theorem}\label{thm:4CTStrengthened}
		Any signed planar graph $(G, \sigma)$ satisfying that $g_{ij}(G, \sigma)\geq g_{ij} (K_{4,4}, M)$ for $ij\in \mathbb{Z}^2_2$ maps to $(K_{4,4},M)$. 
	\end{theorem}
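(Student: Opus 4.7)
The plan is to reduce Theorem~\ref{thm:4CTStrengthened} to a signed strengthening of the Four-Color Theorem due to Guenin. First I would observe that the girth hypothesis actually forces $G$ to be bipartite: since $K_{4,4}$ is bipartite we have $g_{01}(K_{4,4},M)=g_{11}(K_{4,4},M)=\infty$, so the inequality $g_{ij}(G,\sigma)\geq g_{ij}(K_{4,4},M)$ for $ij\in\{01,11\}$ forbids any odd closed walk in $(G,\sigma)$, while $g_{00},g_{10}\geq 4$ are automatic in any simple loopless graph. Thus the real content of the statement is that every signed bipartite planar graph admits a homomorphism to $(K_{4,4},M)$.

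Next I would reformulate the problem combinatorially. By Theorem~\ref{thm:Sign-preserving}, producing a homomorphism $(G,\sigma)\to(K_{4,4},M)$ is equivalent to producing a switching $\sigma'$ of $\sigma$ and an edge-sign preserving homomorphism $(G,\sigma')\spto(K_{4,4},M)$. Label $V(K_{4,4})=\{a_1,\dots,a_4,b_1,\dots,b_4\}$ with $M=\{a_ib_i\}$, and let $A\cup B$ be the bipartition of $G$. Any such homomorphism has the form $v\mapsto a_{c(v)}$ for $v\in A$ and $v\mapsto b_{c(v)}$ for $v\in B$, determined by a $4$-coloring $c:V(G)\to\{1,2,3,4\}$; it is edge-sign preserving exactly when the set $F$ of $c$-monochromatic edges coincides with the $\sigma'$-negative edge set. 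Hence the task is to find an edge subset $F\subseteq E(G)$ that lies, as a signature, in the switching class of $\sigma$, and for which the contracted graph $G/F$ is loopless and admits a proper $4$-coloring.

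The production of such an $F$ is the content of Guenin's edge-coloring theorem for signed bipartite planar graphs, which in turn is built on the Four-Color Theorem. Once $F$ is in hand, one contracts it and applies the Four-Color Theorem to the underlying simple planar graph $G/F$; lifting the resulting proper $4$-coloring back to a $4$-coloring $c$ of $V(G)$ that is constant on each connected component of $F$ and distinct on the endpoints of all other edges, one checks that the set of $c$-monochromatic edges is exactly $F$. The induced map $\phi$ is then the desired edge-sign preserving homomorphism $(G,\sigma')\spto(K_{4,4},M)$, and Theorem~\ref{thm:Sign-preserving} converts it into a homomorphism $(G,\sigma)\to(K_{4,4},M)$.

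The main obstacle is invoking Guenin's theorem with the required switching-class constraint on $F$: without that control, contracting an arbitrary edge set and $4$-colouring the quotient would bear no relation to the given signature. A secondary subtlety is that $G/F$ must be loopless—equivalently, each connected component of $F$ must induce a subgraph of $G$—so that the Four-Color Theorem applies directly; Guenin's construction is arranged so as to meet this condition, which is why the passage from Guenin's result to Theorem~\ref{thm:4CTStrengthened} is as smooth as it is.
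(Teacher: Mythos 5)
First, a point of calibration: the paper does not actually prove Theorem~\ref{thm:4CTStrengthened}; it is quoted from \cite{NRS15}, with the remark that the proof there rests on an edge-colouring result of B.~Guenin, which in turn rests on the Four-Color Theorem. Your proposal should therefore be judged as a reconstruction of that external argument. Your preliminary reductions are correct and match the paper's own framing of the statement: the no-homomorphism conditions do force $G$ to be simple and bipartite, and via Theorem~\ref{thm:Sign-preserving} the problem becomes that of finding a $4$-colouring $c$ of $V(G)$ whose monochromatic edge set $F$ is a signature in the switching class of $\sigma$; equivalently, a signature $F$ equivalent to $\sigma$ such that $G/F$ is loopless, after which the Four-Color Theorem applied to the planar minor $G/F$ supplies $c$. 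That reformulation is sound, and your verification that the monochromatic set of the lifted colouring is exactly $F$ is correct.

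The gap is that the entire mathematical content of the theorem now sits in the sentence ``the production of such an $F$ is the content of Guenin's edge-coloring theorem,'' and that is not what Guenin's theorem says. Guenin's result, in the form used in \cite{NRS15}, is a min--max/packing statement about pairwise disjoint signatures (equivalently, $T$-joins versus $T$-cuts) in planar graphs; it does not directly hand you a single equivalent signature whose contraction is loopless, and extracting the homomorphism to $(K_{4,4},M)$ from the packing is a genuine argument carried out in \cite{NRS15}, not a formality. As written, your proof replaces the hard step by an appeal to a theorem under a description it does not have, and the assertion that ``Guenin's construction is arranged so as to meet this condition'' is claimed rather than verified. Note also that the statement you need cannot be expected to come cheaply: together with the Four-Color Theorem it is equivalent to Theorem~\ref{thm:4CTStrengthened}, which itself implies the Four-Color Theorem through the construction $S(G)$ of Theorem~\ref{thm:X->S(G)}. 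To close the gap you would need to state Guenin's theorem precisely and show how the required signature $F$ --- or directly the partition of $V(G)$ into four classes with ``monochromatic $=$ negative'' --- is derived from it.
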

	
	The conditions of no-homomorphism lemma for $(K_{4,4,} M)$, i.e., that $g_{ij}(G, \sigma)\geq g_{ij} (K_{4,4}, M)$, imply that $G$ is a simple bipartite graph. Thus this statement is a reformulation of the claim that every signed bipartite planar (simple) graph maps to $(K_{4,4,} M)$. 
	
	Viewing the Four-Color Theorem as the mapping of planar graphs to $K_4$, one may consider the only three core (proper) subgraphs $K_1$, $K_2$, $K_3$ of $K_4$ and for each of them, asks: which planar graphs map to these subgraphs?
	While the homomorphism problems to $K_1$ is a matter of triviality and to $K_2$ is rather easy, the homomorphism problem from planar graphs to $K_3$ has been a subject of great study. On the one hand, it is proved to be an NP-complete problem. On the other hard, starting with Gr\"otzsch's theorem, an extensive family of planar graphs are proved to be $3$-colorable. We refer to \cite{DKT11}, \cite{DKT20}, \cite{D13}, \cite{KY14}, \cite{T03} and to the references there for some of the work on this subject.
	
	It is then natural to ask for each core subgraphs of $(K_{4,4},M)$ which families of planar graphs map to. Of such subgraphs of $(K_{4,4},M)$ there are two notable one to consider: 1. the negative $4$-cycle. We refer to \cite{NPW20} for recent progress on this problem. 2. $(K_{3,3}, M)$.
 Considering Theorem~\ref{thm:X->S(G)}, the question of mapping signed bipartite planar graphs captures $3$-coloring problem of planar graphs. For example, Gr\"otzsch's theorem could be restated as: for any triangle-free planar graph $G$, $S(G)\to (K_{3,3,}, M)$. This work then is motivated by the study of mapping signed bipartite planar graphs to $(K_{3,3}, M)$. Noting that this is an NP-complete problem, we search for (polynomial-time verifiable) conditions that are sufficient for such mappings. Among other results, we prove that if $G$ has maximum average degree less than $\frac{14}{5}$, and it satisfies the conditions of the no-homomorphism lemma with respect to $(K_{3,3},M)$, then $(G, \sigma)$ maps to $(K_{3,3}, M)$. Application to planar graphs then is considered in the last section.

	\subsection{Chromatic number and homomorphism to $(K_{2k}, M)$} \label{sec:K2kIntro}
	
	Motivated by the following theorem, we will change our target graph to be $(K_6, M)$ and more generally $(K_{2k}, M)$. 
	
	\begin{theorem}\label{thm:K_{k,k}toK_{2k}}
		A signed bipartite graph $(G, \sigma)$ maps to $(K_{k,k}, M)$ if and only if it maps to $(K_{2k}, M)$.
	\end{theorem}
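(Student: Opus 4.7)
The plan is to prove the two implications separately. The direction $(G,\sigma) \to (K_{k,k}, M) \Rightarrow (G,\sigma) \to (K_{2k}, M)$ holds even without bipartiteness: label the vertices of $K_{2k}$ as $u_1,\ldots,u_k,v_1,\ldots,v_k$ so that the negative matching is $\{u_iv_i\}$; the complete bipartite subgraph between $U=\{u_i\}$ and $V=\{v_i\}$ is a copy of $K_{k,k}$, and the signature inherited from $(K_{2k}, M)$ is precisely $M$ (the $u_iv_i$ are negative, the other bipartite edges positive). Composing a homomorphism $(G,\sigma) \to (K_{k,k},M)$ with this signed inclusion gives the desired map.

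For the reverse direction, assume $(G,\sigma) \to (K_{2k}, M)$ with $G$ bipartite, having parts $A,B$. By Theorem~\ref{thm:Sign-preserving}, I may replace $\sigma$ by a switching-equivalent signature $\sigma^*$ admitting an edge-sign preserving homomorphism $\phi: (G,\sigma^*) \spto (K_{2k}, M)$. Let $\iota$ be the involution on $V(K_{2k})$ swapping each matched pair $u_i \leftrightarrow v_i$. I would first check that $\iota$ is a sign-preserving automorphism of $(K_{2k}, M)$: the matching is fixed setwise, and the remaining edges (of the forms $u_iu_j,\ v_iv_j,\ u_iv_j$ with $i\neq j$) are all positive and are permuted among themselves by $\iota$.

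Next, define $\psi:V(G)\to U\cup V=V(K_{k,k})$ by forcing $\psi(A)\subseteq U$ and $\psi(B)\subseteq V$: for $x\in A$, set $\psi(x)=\phi(x)$ if $\phi(x)\in U$ and $\psi(x)=\iota(\phi(x))$ otherwise; symmetrically for $y\in B$. The decisive step is verifying that $\psi$ is still edge-sign preserving, now as a map into $(K_{k,k}, M)$. For each edge $xy$ (with $x\in A$, $y\in B$), I would split into four cases based on the parts of $K_{2k}$ containing $\phi(x)$ and $\phi(y)$. When $\phi(x)$ and $\phi(y)$ lie on opposite sides, either neither or both endpoints are moved by $\iota$, so sign preservation is immediate from $\iota$ being a sign-preserving automorphism. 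When they lie on the same side, exactly one endpoint is swapped; the original edge is then of the form $u_iu_j$ or $v_iv_j$ with $i\neq j$ (forced by the absence of loops under $\phi$), hence positive and non-matching, and the modified edge $u_iv_j$ is likewise positive and non-matching because $i\neq j$. Once $\psi$ is confirmed to be edge-sign preserving from $(G,\sigma^*)$ into $(K_{k,k}, M)$, a second application of Theorem~\ref{thm:Sign-preserving} yields $(G,\sigma)\to (K_{k,k}, M)$.

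The only potential obstacle is the case analysis in the last step, but it is short: bipartiteness of $G$ lets me decide a priori which side of $K_{k,k}$ each vertex of $G$ should land on, and the non-existence of loops in the image of $\phi$ kills the only subcase (namely $i=j$ in the same-side cases) that could have produced a spurious matching edge.
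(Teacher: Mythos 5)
Your proof is correct and uses the same key idea as the paper: redirect any vertex of $G$ whose image lands on the ``wrong'' side of $K_{2k}$ to the matched partner of that image, which is well defined precisely because $G$ is bipartite. The only difference is one of bookkeeping --- the paper verifies the new map directly at the level of signs of closed walks, while you first pass to an edge-sign preserving homomorphism via Theorem~\ref{thm:Sign-preserving} and check signs edge by edge, which is an equivalent (and if anything slightly more explicit) verification of the same construction.
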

	
	\begin{proof}
	We will view $(K_{k,k}, M)$ as a subgraph of $(K_{2k}, M)$. If $(G, \sigma)$ maps to $(K_{k,k}, M)$, then it, obviously, maps to $(K_{2k}, M)$ as well. 
For the other direction, assume that $\phi$ is a mapping of $(G, \sigma)$ to $(K_{2k}, M)$, and let $(X,Y)$ be a bipartition of $G$ and $(A,B)$ be a bipartition of $K_{k,k}$. If $\phi$ maps each vertex of $X$ to a vertex in $A$ and each vertex of $Y$ to a vertex in $B$, then we are done. Otherwise, we define a mapping $\phi'$ as follows: 
		\[
		\phi'(v) = \begin{cases} \phi(v), &\text{ if $v\in X, \phi(v)\in A$ or $v\in Y,\phi(v)\in B$ } \cr
		m(\phi(v)), &\text{ otherwise}. \cr
		\end{cases}
		\] 
		where $m(u)$ is the match of $u$ by $M$. Since $G$ is bipartite, $\phi'(v)$ preserves incidences and adjacencies. The images of closed walks then are the same as in $\phi$. Thus $\phi'$ is a homomorphism of $(G,\sigma)$ to $(K_{k,k}, M)$.
	\end{proof}
	
	Combining Theorems~\ref{thm:K_{k,k}toK_{2k}} and \ref{thm:X->S(G)} we have the following corollary.
	
	\begin{corollary}\label{coro:X->K_{2k}}
		Given an integer $k\geq 3$ and a graph $G$, we have $\chi(G)\leq k$ if and only if $S(G)\to (K_{2k}, M)$.
	\end{corollary}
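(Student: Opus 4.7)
The plan is to chain together the two theorems stated immediately above the corollary. The key observation that makes this work is that the construction $S(G)$ produces a signed bipartite graph: by definition $S(G)$ is obtained from $G$ by removing each edge $uv$ and inserting two length-two paths through new vertices $x_{uv}$ and $y_{uv}$; hence the bipartition $\bigl(V(G),\ \{x_{uv}, y_{uv} : uv \in E(G)\}\bigr)$ witnesses that every cycle of $S(G)$ has even length. Once bipartiteness is in hand, Theorem~\ref{thm:K_{k,k}toK_{2k}} applies to $S(G)$ and yields the equivalence $S(G)\to (K_{k,k}, M) \iff S(G)\to (K_{2k}, M)$.

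I would then invoke Theorem~\ref{thm:X->S(G)}, which (for $k\geq 3$) gives $\chi(G)\leq k \iff S(G)\to (K_{k,k}, M)$. Combining this with the equivalence from the previous paragraph immediately produces the desired statement $\chi(G)\leq k \iff S(G)\to (K_{2k}, M)$.

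There is essentially no obstacle here, since the corollary is a two-step chaining of stated results; the only thing to check is the hypothesis of Theorem~\ref{thm:K_{k,k}toK_{2k}}, namely that $S(G)$ is bipartite, which is immediate from the construction. I would write the proof in one or two sentences: state the bipartition of $S(G)$ explicitly, then write the chain of equivalences
\[
\chi(G)\leq k \;\stackrel{\text{Thm }\ref{thm:X->S(G)}}{\Longleftrightarrow}\; S(G)\to (K_{k,k}, M) \;\stackrel{\text{Thm }\ref{thm:K_{k,k}toK_{2k}}}{\Longleftrightarrow}\; S(G)\to (K_{2k}, M),
\]
and conclude.
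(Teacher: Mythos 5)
Your proposal is correct and is exactly the paper's argument: the authors derive the corollary by combining Theorem~\ref{thm:X->S(G)} with Theorem~\ref{thm:K_{k,k}toK_{2k}}, and your explicit check that $S(G)$ is bipartite (so that Theorem~\ref{thm:K_{k,k}toK_{2k}} applies) is the only hypothesis that needs verifying. Nothing further is required.
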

	
	However, in contrast to Theorem~\ref{thm:4CTStrengthened}, we will show that the conditions of no-homomorphism lemma are not sufficient for mapping signed planar graphs to $(K_8, M)$. In Section~\ref{sec:examples}, we will give a series of signed planar graphs of girth $3$ that do not map to $(K_8, M)$. (see Theorem~\ref{thm:K8tight})

	\subsection{Mapping to $(K_6, M)$}\label{sec:main}
	
	A main focus of this work is the following theorem. We note that since we only consider simple signed graphs, the conditions of no-homomorphism lemma with respect to $(K_{6}, M)$ is always satisfied.
	
	\begin{theorem}\label{main-theorem}
		Every signed graph with maximum average degree less than $\frac{14}{5}$ admits a homomorphism to $(K_6, M)$. Moreover, the bound $\frac{14}{5}$ is the best possible.
	\end{theorem}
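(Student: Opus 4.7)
The plan is to prove sufficiency by a discharging argument on a minimum counterexample, and tightness by an explicit construction. Suppose towards contradiction that $(G,\sigma)$ is a counterexample minimizing $|V(G)|+|E(G)|$: $\mad(G)<\frac{14}{5}$, $(G,\sigma)$ admits no homomorphism to $(K_6,M)$, but every proper subgraph does (with the induced signature). By Theorem~\ref{thm:Hom->Edge-SignPres}, equivalently we seek an edge-sign preserving homomorphism into ${\rm DSG}(K_6,M)$, a highly transitive signed graph on $12$ vertices in which every vertex has exactly $5$ positive and $5$ negative neighbors, with one positive and one negative edge to each of the five other pairs $\{y^+,y^-\}$. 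This abundance is what powers the reducibility step.

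Next I would compile a list of reducible configurations, that is, local patterns whose absence is what the discharging step will exploit. Each reducibility proof follows the same recipe: delete part of the configuration from $G$, apply minimality to obtain an edge-sign preserving homomorphism $\phi$ of the rest into ${\rm DSG}(K_6,M)$, then count how many images remain admissible at each deleted vertex after imposing the sign constraints from its surviving neighbors. Since each surviving neighbor pins the image of a deleted vertex to $5$ of the $10$ available candidates, and consecutive constraints intersect predictably via the matching $M$, one expects conclusions such as: $(G,\sigma)$ has no degree-$1$ vertex; no two degree-$2$ vertices are adjacent; and $3$-vertices with too many low-degree neighbors are excluded. The delicate cases lie at degree-$3$ vertices whose entire neighborhood is low-degree, where the count of joint admissible images tightens and the matching structure of $M$ in $(K_6,M)$ has to be used explicitly.

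With these configurations ruled out, one runs discharging. Assign each vertex initial charge $d(v)-\frac{14}{5}$, so the total charge is $\bigl(\text{avg.\ deg}(G)-\tfrac{14}{5}\bigr)|V(G)|<0$. The rules send charge from vertices of degree $\geq 3$ to their adjacent low-degree vertices, with weights chosen to match the arithmetic of $\frac{14}{5}$ (typically fractions with denominator $5$). Using the forbidden configurations, one verifies that the final charge at every vertex is nonnegative, contradicting the negativity of the total and completing the sufficiency direction.

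For tightness, one needs a signed graph with $\mad=\frac{14}{5}$ that does not admit a homomorphism to $(K_6,M)$. A natural candidate uses Theorems~\ref{thm:X->S(G)} and~\ref{thm:K_{k,k}toK_{2k}}: starting from a graph $H$ with $\chi(H)\geq 4$, the signed bipartite graph $S(H)$ maps to $(K_6,M)$ if and only if $H$ is $3$-colorable, so $S(H)$ is a non-example; a careful choice of $H$, or suitable subdivision of it, calibrates $\mad(S(H))$ to exactly $\frac{14}{5}$. The principal obstacle across the whole argument is the reducibility analysis: to make the discharging tight at the threshold $\frac{14}{5}$ one needs a carefully balanced, not-too-long catalog of forbidden configurations, and each entry must be verified by a delicate count of admissible images in ${\rm DSG}(K_6,M)$ under the combined sign constraints imposed by its already-colored neighbors.
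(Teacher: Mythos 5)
Your overall architecture for the sufficiency direction (minimum counterexample, pass to edge-sign preserving homomorphisms into ${\rm DSG}(K_6,M)$ via Theorem~\ref{thm:Hom->Edge-SignPres}, reducible configurations verified by counting admissible images, then discharging from initial charge $d(v)-\frac{14}{5}$) is exactly the paper's, but there are two concrete gaps. First, the reducibility catalog you anticipate is purely local, and at the threshold $\frac{14}{5}$ a local catalog does not close the discharging. The paper must forbid, in addition to the local patterns you list (no $2_1$-, $3_2$-, $4_4$-, $5_5$-vertices, no $3_1$-vertex adjacent to a $4_3$-vertex, etc.), an \emph{unbounded} family of configurations, the ``poor paths'' of Section~\ref{sec:badchain}: arbitrarily long paths alternating between $3_0$- and $3_1$-vertices with prescribed ends. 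Ruling these out requires the list-coloring lemmas for long paths and cycles (Lemmas~\ref{5-10path}, \ref{lem:evencycle}, \ref{lem:oddcycle}), and the discharging itself is then two-staged: a secondary discharging inside each component of the $3^-$-subgraph (Lemma~\ref{lem:n_0-n_1}, proving $n_0(H)\geq n_1(H)$ using the absence of poor paths) feeds into the global rule. Your plan, as stated, would stall at vertices of degree $3$ all of whose neighbors are again poor degree-$3$ vertices, which is precisely where the charge cannot be balanced locally. (A small arithmetic slip: ${\rm DSG}(K_6,M)$ has $12$ vertices, so a single precolored neighbor pins a vertex to $5$ of $12$ candidates, not $5$ of $10$.)

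Second, your tightness construction fails. If $\chi(H)\geq 4$ then $H$ contains a $4$-critical subgraph $H'$ with minimum degree at least $3$, hence $m'\geq \frac{3n'}{2}$, and $S(H')$ is a subgraph of $S(H)$ with average degree $\frac{8m'}{n'+2m'}\geq 3$; thus $\mad(S(H))\geq 3>\frac{14}{5}$ for \emph{every} admissible $H$, and no calibration is possible. Subdividing $H$ does not help, since subdivision destroys non-$3$-colorability. The paper instead exhibits a direct $5$-vertex example (Figure~\ref{fig:sharp6}) with $7$ edges, hence $\mad=\frac{14}{5}$ exactly, and shows it does not map to $(K_6,M)$ by the elementary observation that $(K_6,M)$ contains no triangle with two negative edges: after switching, the two positive triangles force an all-positive signature on them, the image of the remaining negative edge occupies a matching pair, and the leftover four vertices of $K_6$ induce no positive triangle. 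You need some such ad hoc construction; the $S(\cdot)$ functor is structurally too dense to witness tightness at $\frac{14}{5}$.
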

	
	In light of Theorem~\ref{thm:Hom->Edge-SignPres}, we will study edge-sign preserving homomorphism to ${\rm DSG}(K_6, M)$ in place of the homomorphism to $(K_6, M)$. Our proof is based on discharging technique. To provide a set of forbidden configuration, we first develop some ${\rm DSG}(K_6, M)$-list-coloring tools in Section~\ref{sec:precoloring}. Assuming $(G, \sigma)$ is a minimum counterexample to Theorem~\ref{main-theorem}, our set of forbidden configurations are provided in Section~\ref{sec:forbidden-configurations}. Then in Section~\ref{sec:discharging}, the discharging technique is employed to prove that $(G, \sigma)$ cannot have maximum average degree less than $\frac{14}{5}$. Generalization of the theorem to $(K_{2k},M)$ is considered in Section~\ref{sec:K2k} and examples toward the tightness of results are provided in Section~\ref{sec:examples}.

    We shall note that, while the notion of homomorphism of signed graphs is relatively new, the notion of edge-sign preserving homomorphism is a renaming of the notion of homomorphism of $2$-edge-colored graphs which are largely studies since 1980's. In particular, in relation to our work, we may apply Theorem 2.5 of \cite{BKKW04} to ${\rm DSG}(K_6,M)$ with $t=3$ to obtain the following:

\begin{theorem}[Special case of Theorem 2.5 of \cite{BKKW04}]
If $G$ is a graph of girth at least $7$ and maximum average degree at most $\frac{28}{11}$, then $(G, \sigma)\to (K_6,M)$ for any signature $\sigma$.
\end{theorem}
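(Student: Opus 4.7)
The plan is to reduce the statement to an edge-sign preserving (equivalently, $2$-edge-colored) homomorphism problem and then invoke the cited theorem of Borodin, Kim, Kostochka, and West directly. By Theorem~\ref{thm:Hom->Edge-SignPres}, proving that $(G,\sigma) \to (K_6, M)$ for every signature $\sigma$ is equivalent to proving that $(G,\sigma) \spto H$ for every $\sigma$, where $H := {\rm DSG}(K_6, M)$. Thus I would work with the fixed target $H$, which has $12$ vertices (two copies of $V(K_6)$ joined according to the switching rule), every vertex of degree $10$, and a highly symmetric distribution of positive and negative edges. In particular, each vertex of $H$ is incident to $5$ positive and $5$ negative edges, and the automorphism group of $H$ acts transitively on vertices and preserves edge-signs, so any ``local'' absorbing property one asks of $H$ only needs to be verified up to this symmetry.

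Next, I would quote Theorem 2.5 of \cite{BKKW04} in its general form: it says that once the target $H$ satisfies a structural extendability condition indexed by a positive integer $t$, every $2$-edge-colored graph $G$ whose girth is at least $2t+1$ and whose maximum average degree is bounded by the function $m(t)$ appearing in the statement admits an edge-sign preserving homomorphism into $H$. With $t=3$, the girth threshold becomes $7$ and the maximum average degree threshold becomes $\frac{28}{11}$. So the only thing that needs to be established is that $H = {\rm DSG}(K_6, M)$ satisfies the extendability hypothesis of \cite{BKKW04} at $t=3$: concretely, for any short signed path of length at most $3$ one wishes to extend from $G$ into $H$, the set of admissible images in $H$ has to be large enough (in the precise sense demanded by the theorem). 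This is a finite verification on the fixed graph $H$, greatly simplified by its vertex-transitivity.

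Once the extendability check is made, the conclusion is immediate: by Theorem 2.5 of \cite{BKKW04} the hypotheses $g(G)\geq 7$ and $\mad(G) \leq \frac{28}{11}$ imply $(G,\sigma) \spto H$ for each fixed $\sigma$, and then Theorem~\ref{thm:Hom->Edge-SignPres} translates this into the desired $(G,\sigma) \to (K_6, M)$. The main obstacle, and really the only nontrivial step, is the extendability verification for $H$ at parameter $t=3$; everything else is a direct quotation. Since $H$ has only $12$ vertices and a transitive automorphism group, this reduces to a very small case check on short signed walks emanating from a single vertex, which I would carry out by fixing a base vertex $x_1^{+}$ and enumerating the options for signed neighbors at distance $1,2,3$, using the balanced sign distribution of $H$ to guarantee the required number of common extenders at each step.
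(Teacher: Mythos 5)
Your proposal matches the paper's own treatment: the paper likewise obtains this statement by translating $(G,\sigma)\to(K_6,M)$ into an edge-sign preserving ($2$-edge-colored) homomorphism to ${\rm DSG}(K_6,M)$ via Theorem~\ref{thm:Hom->Edge-SignPres} and then directly applying Theorem 2.5 of \cite{BKKW04} with $t=3$, which yields the girth-$7$ and $\mad\leq\frac{28}{11}$ hypotheses. The only substance beyond quotation is checking that ${\rm DSG}(K_6,M)$ satisfies the extendability hypothesis of that theorem at $t=3$, which you correctly identify as a finite, symmetry-reduced verification (the paper asserts this applicability without spelling it out).
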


	\section{Preliminary}\label{sec:Pre}
		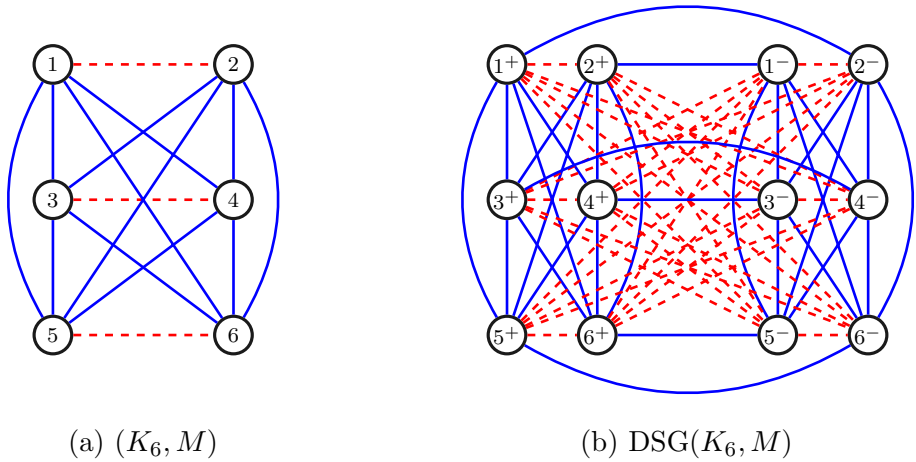
\begin{figure}[htpb]
		\begin{minipage}[t]{.5\textwidth}
			\centering
			\begin{tikzpicture}[>=latex,
			roundnode/.style={circle, draw=black!90, very thick, minimum size=5mm, inner sep=0pt},
			scale=1.2]
			
			\node[roundnode](a1) at (1,3){\scriptsize $1$};
			\node[roundnode](b1) at (3,3) {\scriptsize $2$};
			\node[roundnode](c1) at (1,1.5) {\scriptsize $3$};
			\node[roundnode](d1) at (3,1.5) {\scriptsize $4$};
			\node[roundnode](e1) at (1,0){\scriptsize $5$};
			\node[roundnode](f1) at (3,0){\scriptsize $6$};
			\draw[red, dashed, line width =1pt] (a1)--(b1);
			\draw[red, dashed, line width =1pt](c1)--(d1);
			\draw[red, dashed, line width =1pt] (e1)--(f1);
			\draw[blue, line width =1pt](a1)--(c1)--(e1)--(b1)--(d1)--(f1)--(a1)--(d1)--(e1);
			\draw[blue, line width =1pt](f1)--(c1)--(b1);
			\draw[blue, line width =1pt](a1) edge[bend right] (e1);
			\draw[blue, line width =1pt](b1) edge[bend left] (f1);
			
			\node[roundnode, white](e3) at (0,0){\scriptsize $5^+$};
			\node[roundnode, white](f2) at (4,0){\scriptsize $6^-$};
			\draw[line width =1pt, white](e3) edge[bend right] (f2);
			
			\end{tikzpicture}
			\subcaption{$(K_6,M)$}
			\label{fig:K6}
		\end{minipage} 
		\begin{minipage}[t]{.4\textwidth}
			\centering
			\begin{tikzpicture}[>=latex,
			roundnode/.style={circle, draw=black!90, very thick, minimum size=5mm, inner sep=0pt},  scale=1.2
			]
			\node[roundnode](a1) at (1,3){\scriptsize $1^+$};
			\node[roundnode](b1) at (2,3) {\scriptsize $2^+$};
			\node[roundnode](c1) at (1,1.5) {\scriptsize $3^+$};
			\node[roundnode](d1) at (2,1.5) {\scriptsize $4^+$};
			\node[roundnode](e1) at (1,0){\scriptsize $5^+$};
			\node[roundnode](f1) at (2,0){\scriptsize $6^+$};
			\draw[red, dashed, line width =1pt] (a1)--(b1);
			\draw[red, dashed, line width =1pt](c1)--(d1);
			\draw[red, dashed, line width =1pt] (e1)--(f1);
			\draw[blue, line width =1pt](a1)--(c1)--(e1)--(b1)--(d1)--(f1)--(a1)--(d1)--(e1);
			\draw[blue, line width =1pt](f1)--(c1)--(b1);
			\draw[blue, line width =1pt](a1) edge[bend right] (e1);
			\draw[blue, line width =1pt](b1) edge[bend left] (f1);
			
			\node[roundnode](a2) at (4,3){\scriptsize $1^-$};
			\node[roundnode](b2) at (5,3) {\scriptsize $2^-$};
			\node[roundnode](c2) at (4,1.5) {\scriptsize $3^-$};
			\node[roundnode](d2) at (5,1.5) {\scriptsize $4^-$};
			\node[roundnode](e2) at (4,0){\scriptsize $5^-$};
			\node[roundnode](f2) at (5,0){\scriptsize $6^-$};
			
			\draw[red, dashed, line width =1pt] (a2)--(b2);
			\draw[red, dashed, line width =1pt](c2)--(d2);
			\draw[red, dashed, line width =1pt] (e2)--(f2);
			\draw[blue, line width =1pt](a2)--(c2)--(e2)--(b2)--(d2)--(f2)--(a2)--(d2)--(e2);
			\draw[blue, line width =1pt](f2)--(c2)--(b2);
			\draw[blue, line width =1pt](a2) edge[bend right] (e2);
			\draw[blue, line width =1pt](b2) edge[bend left] (f2);
			
			\foreach \i in {c,d,e,f}{
				\draw[red, dashed, line width =1pt](a1)--(\i2);
				\draw[red, dashed, line width =1pt](b1)--(\i2);}
			
			\foreach \i in {a,b,e,f}{
				\draw[red, dashed, line width = 1pt](c1)--(\i2);
				\draw[red, dashed, line width = 1pt](d1)--(\i2);}
			
			\foreach \i in {a,b,c,d}{
				\draw[red, dashed, line width = 1pt](e1)--(\i2);
				\draw[red, dashed, line width = 1pt](f1)--(\i2);}
			
			\draw[blue, line width=1pt](a1) edge[bend left] (b2);
			\draw[blue, line width=1pt](a2)--(b1);
			
			\draw[blue, line width=1pt](c1) edge[bend left] (d2);
			\draw[blue, line width=1pt](c2)--(d1);
			
			\draw[blue, line width=1pt](e1) edge[bend right] (f2);
			\draw[blue, line width=1pt](e2)--(f1);
			\end{tikzpicture}
			\subcaption{${\rm DSG}(K_6,M)$}
		\end{minipage} 
		\caption{Signed graphs $(K_{6},M)$ and ${\rm DSG}(K_6,M)$}
		\label{fig:DSG(K6M)}
	\end{figure}

    Vertices of $(K_{6}, M)$ and its Double Switching Graph ${\rm DSG}(K_{6}, M)$ are labeled as in Figure~\ref{fig:DSG(K6M)}.  The signature of ${\rm DSG}(K_{6}, M)$ will be denoted by $m^{*}$. Thus in $(K_{6}, M)$, vertices $2i-1$ and $2i$ are connected by a negative edge ($1\leq i \leq 3$) where all other edges are positive.  The vertex set of ${\rm DSG}(K_{6}, M)$, which will be denoted by $C$, is partitioned into two sets: $C^+$ and $C^-$. Each vertex $x$ in $C^{\alpha}$, $\alpha \in \{+, -\}$, is connected to a unique vertex in $C^{\alpha}$ by a negative edge, this vertex is called the \emph{pair} of $x$, denoted by \emph{Pair$(x)$}. In the rest of this work, a pair of colors would refer to a vertex of ${\rm DSG}(K_6,M)$ and its pair. Given a vertex $x^{\alpha}$ and its pair $y^{\alpha}$, the vertices $x^{-\alpha}$ and $y^{-\alpha}$ form another pair and the four vertices together form a \emph{layer} in ${\rm DSG}(K_{6}, M)$ (a horizontal line in Figure~\ref{fig:DSG(K6M)}). Given a vertex $x\in C$, the only vertex not adjacent to it is called \emph{inverse} of $x$. Moreover, given a subset $X$ of $C$, the inverse of $X$, denoted $X^{-}$, is the set of of inverses of the elements of $X$.
	
	One of the main ideas of this work is to extend a partial mapping of a signed graph $(G,\sigma)$ to ${\rm DSG}(K_{6}, M)$ to a mapping of the full graph. This is captured in our figures with the following setting: vertices presented in square are precolored and circular vertices are yet to be colored. Such extension problems lead to the idea of $(K_6,M)$-list coloring or ${\rm DSG}(K_6,M)$-list coloring. We refer to \cite{BBFHJ20} and references there for a general study of list-homomorphism problem of graphs and signed graphs. List assignments considered in this work, however, are rather restricted. To better describe lists of available colors on vertices, we introduce the following terminology.
	
	A set $L$ of colors ($L\subseteq C$) is said to be \emph{paired} if for all but at most one $x\in L$ we have $Pair(x)\in L$. For example, $L_1=\{1^+, 2^+, 3^+, 4^+, 5^+\}$ and $L_2=\{1^+, 2^+, 3^+, 5^-, 6^-\}$ are paired sets of colors while  $L_3=\{1^+, 3^+\}$ is not. We say a paired set $X$ is \emph{layered} if no three colors of $X$ belong to a layer. We say a layered set $X$ is \emph{one-sided} if all the colors in $X$ are on the same side, i.e., either $X\subseteq  C^{+}$ or $X\subseteq C^{-}$. A layered set $X$ of size $2k+1$, $k=0,1,2$, is said to be a \emph{neighbored $(2k+1)$-set} if $C$ consists of $k$ pairs on one side and a single element on the other side. Observe that a neighbored $5$-set is the set of all vertices adjacent to a vertex $v$ by positive edges. A neighbored $3$-set consists of neighbors of a vertex $x$ which are connected to $x$ by positive edges and each of which has a positive path of length 2 to another fixed vertex $y$.

	Given a signed graph $(G,\sigma)$, a \emph{${\rm DSG}(K_6,M)$-list assignment} $L$ of $(G,\sigma)$ is a function that assigns to each vertex of $G$ a set $L(v) \subseteq C$. An \emph{edge-sign preserving $L$-homomorphism} of $(G,\sigma)$ to ${\rm DSG}(K_6,M)$ is a mapping $\phi:V(G) \to C$ such that for each vertex $v \in V(G)$, $\phi(v) \in L(v)$ and for each edge $uv \in E(G)$, $m^*(\phi(u)\phi(v)) = \sigma(uv)$.

	In this paper, we consider only lists that are subsets of $C$. When there is no confusion, we call such list homomorphism \emph{$L$-coloring}. If there exists an $L$-coloring, we say $(G,\sigma)$ is \emph{$L$-colorable}. 
	
	For a given signed graph $(G, \sigma)$, a signature on $G$ obtained from $\sigma$ by switching at a vertex set $X$, is denoted by $\sigma^X$. Given a ${\rm DSG}(K_6, M)$-list assignment $L$ of $(G, \sigma)$, let $L^X$ be a list assignment defined by:
	\[
	L^X(v)= \begin{cases} (L(v))^-, &\text{ for $v\in X$} \cr
	L(v), &\text{ for $v\in V(G)\setminus X$}. \cr
	\end{cases}
	\] 
	
\begin{observation}\label{obs:L-Lx}
A signed graph $(G, \sigma)$ is $L$-colorable if and only if $(G, \sigma^X)$ is $L^X$-colorable.  
\end{observation}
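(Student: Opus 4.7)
The plan is to construct an explicit bijection between the $L$-colorings of $(G,\sigma)$ and the $L^X$-colorings of $(G,\sigma^X)$. Given an $L$-coloring $\phi$ of $(G,\sigma)$, I would define $\phi'(v) = \phi(v)$ for $v \notin X$ and $\phi'(v) = $ the inverse of $\phi(v)$ for $v \in X$, and then verify that $\phi'$ is an $L^X$-coloring of $(G,\sigma^X)$. Membership in the lists is immediate from the definition of $L^X$: for $v\notin X$, $L^X(v)=L(v)$ and $\phi'(v)=\phi(v)\in L(v)$; for $v\in X$, $L^X(v)=(L(v))^-$ is precisely the set of inverses of elements of $L(v)$, which contains $\phi'(v)$ by construction.

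The substantive step is sign preservation, which rests on the following structural feature of ${\rm DSG}(K_6,M)$: for any edge $xy$ of ${\rm DSG}(K_6,M)$, replacing exactly one endpoint by its inverse yields an edge with the opposite sign, while replacing both yields an edge with the same sign. This can be read off the construction, since the sign of the edge between $x_i^{\alpha}$ and $x_j^{\beta}$ in ${\rm DSG}(K_6,M)$ is the sign of $x_ix_j$ in $(K_6,M)$ times $\alpha\beta\in\{+,-\}$, and inversion flips the $\pm$ coordinate. Note also that since $\phi(u)$ and $\phi(v)$ are adjacent in ${\rm DSG}(K_6,M)$, they are not inverses of each other, so replacing either or both by their inverses still produces an edge. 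Now, for $uv\in E(G)$, if $|\{u,v\}\cap X|\in\{0,2\}$ then either both or neither image is inverted, so $m^*(\phi'(u)\phi'(v)) = m^*(\phi(u)\phi(v)) = \sigma(uv) = \sigma^X(uv)$ (the last equality holds because $uv$ lies outside the cut $(X, V(G)\setminus X)$). If $|\{u,v\}\cap X|=1$, exactly one image is inverted, so $m^*(\phi'(u)\phi'(v))=-\sigma(uv)=\sigma^X(uv)$ (since $uv$ lies in the cut).

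The converse direction is immediate from the fact that inversion is an involution, which gives $(\sigma^X)^X=\sigma$ and $(L^X)^X=L$; applying the same construction to any $L^X$-coloring of $(G,\sigma^X)$ returns an $L$-coloring of $(G,\sigma)$. There is no genuine obstacle in this argument: the only care needed is to verify the sign behavior of ${\rm DSG}(K_6,M)$ under inversion, after which the statement reduces to routine case checking.
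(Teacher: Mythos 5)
Your proof is correct: the paper states this as an observation without proof, and your argument — inverting the images of vertices in $X$, using the fact that the sign of the edge $x_i^{\alpha}x_j^{\beta}$ in ${\rm DSG}(K_6,M)$ is the sign of $x_ix_j$ times $\alpha\beta$, so that flipping one endpoint's side negates the sign while flipping both preserves it — is exactly the intended justification, mirroring how switching is handled in the proof of Theorem~\ref{thm:Hom->Edge-SignPres}. The care you take with adjacency being preserved under inversion and with the involution giving the converse is appropriate and complete.
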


\subsection{$L$-coloring of a signed rooted tree}	

Let $T$ be a rooted tree with root $v$. Given a vertex $x$ of $T$, we define a subtree rooted at $x$, denoted $T_x$, to be the subgraph induced by $x$ and those vertices of $T$ whose unique path to $v$ contains $x$. Let $L$ be a ${\rm DSG}(K_6, M)$-list assignment of $(T, \sigma)$. 

Toward deciding if $T$ is $L$-colorable, and taking advantage of the rooted tree, we have the following definitions:

For a vertex $x$ of $T$, we define the set of \emph{admissible colors}, denoted $L^a(x)$, to be the set of the colors $c\in L(x)$ such that with the restriction of $L$ onto $T_x$ there exists an $L$-coloring $\phi$ of $T_x$ where $\phi(x)=c$. Thus $T$ is $L$-colorable if and only if $L^a(v)\neq \emptyset$ and moreover, reducing $L(x)$ to $L^a(x)$ at any time would not affect $L$-colorability of $T$.

Sometimes, instead of considering the set of admissible colors at $x$, it is preferable to consider the set of colors that are forbidden through the children or neighbors in general. Let $xy$ be an edge of $(G, \sigma)$ and assume $L(y)$ is the set of colors available at $y$. Then we define $F_{L(y)}(x)$ to be the set of forbidden colors at $x$ because of the edge $xy$ and the list $L(y)$. More precisely, a color $c$ is forbidden on $x$ because of $L(y)$ if for each choice $c'\in L(y)$ either $c$ is not adjacent to $c'$ or $\sigma(xy)\neq m^*(cc')$. For example if $L(y)=\{1^+,2^+, 3^-\}$, and $xy$ is a positive edge, then $F_{L(y)}(x)=\{1^+,2^+,3^-,4^-\}$.  When the list assignment is clear from the context, we may simply write $F_y(x)$ in place of $F_{L(y)}(x)$. For $L$-coloring of a subtree $T$, we have the following relation between the two notions: $$L^a(x)=L(x) \setminus \bigcup_{\substack{y \\ y\text{ child of } x}}F_{L^a(y)}(x). $$

Thus in the rest of this work, at any time we may modify list $L(x)$ at anytime by removed colors from $F_y(x)$ or simply replace it by $L^a(x)$. In the following lemmas, we gather basic rules on theses modifications.

\begin{lemma}\label{lem:2-Path}
	Let $T$ be a path $vv_1v_2$ where $v$ is the root.  Let $L$ be a list assignment satisfying $L(v)=L(v_1)=C$, and $|L(v_2)|=1$ (i.e., $v_2$ is precolored). Then $L^a(v)$ is a paired $10$-set.
\end{lemma}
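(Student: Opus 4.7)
The plan is to compute $L^a(v)$ directly, by applying the recursive identity $L^a(x) = L(x) \setminus \bigcup_{y \text{ child of } x} F_{L^a(y)}(x)$ first at $v_1$ and then at $v$. The statement then reduces to a short structural calculation inside ${\rm DSG}(K_6, M)$.

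Before computing, I would carry out two symmetry reductions to cut down the case analysis. Switching at $v_1$ simultaneously flips both $\sigma(vv_1)$ and $\sigma(v_1v_2)$ and replaces $L(v_1)=C$ by $C^{-}$, which equals $C$ since inversion is an involution on $C$; by Observation~\ref{obs:L-Lx} this does not affect $L$-colorability. Hence, up to such a switching, we may assume $\sigma(v_1v_2)=+$, leaving only two cases according to the sign of $vv_1$. Moreover, the automorphism group of ${\rm DSG}(K_6,M)$ acts transitively on $C$: the signed automorphisms of $(K_6,M)$ permute the six vertices transitively while preserving the matching, and the map sending $x_i^{+}$ to $x_i^{-}$ (and vice versa) is an automorphism of ${\rm DSG}(K_6,M)$. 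So we may further fix the precolor of $v_2$ to be $1^{+}$.

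With these reductions in place, Step~1 identifies $L^a(v_1)$ as the set of positive neighbors of $1^{+}$, namely the neighbored $5$-set $\{3^{+},4^{+},5^{+},6^{+},2^{-}\}$. Step~2 computes
\[
L^a(v) \;=\; \bigl\{\, c''\in C \;:\; \exists\, c'\in L^a(v_1) \text{ with } m^{*}(c''c') = \sigma(vv_1)\,\bigr\}.
\]
The bookkeeping becomes transparent once one records the structure of the positive-edge subgraph of ${\rm DSG}(K_6, M)$: it consists of two copies of $K_{2,2,2}$, one on $C^{+}$ and one on $C^{-}$ with parts equal to the three matching pairs, joined by the perfect matching $\{\, i^{+}\,Pair(i)^{-}\,\}$. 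A short inspection of the twelve candidates $c''$ then shows that exactly two of them fail, and that these two form a single pair: the pair $\{1^{-},2^{-}\}$ (the inverses of $1^{+}$ and $Pair(1^{+})$) when $\sigma(vv_1)=+$, and the pair $\{1^{+},2^{+}\}$ itself when $\sigma(vv_1)=-$. In either case $L^a(v)$ consists of the remaining ten colors, which split into five pairs, so $L^a(v)$ is a paired $10$-set.

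The only real obstacle is the bookkeeping in Step~2; there is no deep idea, but one has to verify for each of the twelve candidates $c''$ whether a witness of the correct sign exists in $L^a(v_1)$. The $K_{2,2,2}$-plus-matching description of the positive subgraph makes this check essentially immediate and will be reused repeatedly in the subsequent list-coloring lemmas.
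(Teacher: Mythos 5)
Your proof is correct, and it is essentially the computation the paper itself relies on: the paper leaves Lemma~\ref{lem:2-Path} unproved, but your two steps are exactly Lemma~\ref{oneEdgeRestriction}\ref{P2-0.5} (equivalently Observation~\ref{obs:8-set}, giving that $L^a(v_1)$ is a neighbored $5$-set) followed by Lemma~\ref{oneEdgeRestriction}\ref{P2-2.5} (a neighbored $5$-set forbids exactly one pair), and your symmetry reductions match the paper's ``without loss of generality'' conventions. The identified forbidden pairs ($\{1^-,2^-\}$ for a positive edge $vv_1$, $\{1^+,2^+\}$ for a negative one) check out against the worked examples in the proof of Lemma~\ref{oneEdgeRestriction}.
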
 

\begin{lemma}
	Let $T$ be a rooted tree on $3$-path $v_1v_2vv_3$ with $v$ being the root. Let $L$ be a list assignment satisfying $|L(v_1)|=|L(v_3)|=1$ and $L(v_2)=L(v)=C$. Then $L^a(v)$ is either a neighbored $5$-set, or a neighbored $3$-set, or it is a one-sided paired $4$-set.
\end{lemma}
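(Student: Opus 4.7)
My plan is to compute $L^a(v)$ by handling the two branches rooted at $v$ in turn: first the branch through $v_2$ down to the precolored leaf $v_1$, then the edge directly to the precolored leaf $v_3$. The subtree on the vertices $v, v_2, v_1$ rooted at $v$ is exactly the configuration of the previous lemma (with $v$, $v_2$, $v_1$ playing the roles of the root, middle, and precolored leaf there), so that lemma yields that the admissible list at $v$ coming from this branch alone -- call it $L^\star(v)$ -- is a paired $10$-set. As a small structural observation I would record, any paired subset of $C$ of size $10$ must consist of five complete pairs: if some element were unpaired, the remaining nine elements would have to be a union of pairs, contradicting the parity of $9$. Hence $L^\star(v) = C \setminus P_0$ for some pair $P_0 \subseteq C$.

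Next I would bring in the constraint from the edge $vv_3$. Setting $z = L(v_3)$ and $N = N_{\sigma(vv_3)}(z)$ (the $\sigma(vv_3)$-sign neighbors of $z$ in $\mathrm{DSG}(K_6,M)$), the construction of $\mathrm{DSG}(K_6,M)$ immediately gives that $N$ is a neighbored $5$-set: two pairs on one side of the DSG together with a single element $\ell$ lying in the layer of $z$ on the opposite side. Therefore
\[
L^a(v) \;=\; L^\star(v) \cap N \;=\; N \setminus P_0.
\]

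The remaining step is a case analysis on $|P_0 \cap N|$. Among the six pairs of $C$, the two pairs constituting $N$ meet $N$ in two elements, the pair $\{\ell, \mathrm{Pair}(\ell)\}$ meets $N$ in exactly one element (since $\mathrm{Pair}(\ell)$ lies outside the layer of $z$ and hence outside $N$), and the remaining three pairs are disjoint from $N$. This yields exactly three outcomes: if $|P_0\cap N|=0$, then $L^a(v)=N$ is a neighbored $5$-set; if $|P_0 \cap N|=2$, then $P_0$ equals one of the two pairs of $N$, so $L^a(v)=N\setminus P_0$ is a pair together with a lone element, a neighbored $3$-set; and if $|P_0 \cap N|=1$, then necessarily $P_0=\{\ell,\mathrm{Pair}(\ell)\}$, so $L^a(v)=N\setminus\{\ell\}$ equals the two pairs composing $N$, a one-sided paired $4$-set.

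The main obstacle is the pair-intersection bookkeeping in the final step, in particular verifying that an intersection of size exactly one forces $P_0$ to contain the lone element $\ell$: this uses that the two pairs inside $N$ are complete, so any other pair meeting one of them in a single element would be forced to contain the partner as well, a contradiction. Once this enumeration of how pairs of $C$ meet a neighbored $5$-set is in hand, the three possibilities listed in the lemma are exactly exhausted.
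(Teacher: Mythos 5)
Your argument is correct and is exactly the intended one: the paper states this lemma without proof, but the surrounding machinery (Lemma~\ref{lem:2-Path} for the $v_1v_2v$ branch, Lemma~\ref{oneEdgeRestriction} and Observation~\ref{obs:8-set} for the edge $vv_3$) yields precisely your decomposition $L^a(v)=(C\setminus P_0)\cap N$ with $P_0$ a pair and $N$ a neighbored $5$-set. Your enumeration of how the six pairs of $C$ meet a neighbored $5$-set ($2,2,1,0,0,0$) is accurate and exhausts the three cases of the statement, so the proof is complete.
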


\begin{lemma}\label{lem:3-1}
	Let $(T, \sigma)$ be a signed rooted tree of Figure~\ref{fig:3-1} with $v$ as the root. Let $L$ be a list assignment satisfying $|L(v_1')|=|L(v_2)|=1$ and $L(v)=L(v_0)=L(v_1)=C$.  
	Then we have $|L^a(v)|\geq 8$.
\end{lemma}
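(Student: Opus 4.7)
The plan is to apply the preceding lemma to the subtree $T_{v_0}$ rooted at $v_0$, which (after the natural relabeling) is precisely the $3$-path configuration treated there: the branch $v_0 v_1'$ plays the role of the length-$1$ branch to a precolored leaf, while $v_0 v_1 v_2$ plays the role of the length-$2$ branch ending at the precolored leaf $v_2$. Consequently $L^a(v_0)$ falls into one of three shapes: a neighbored $5$-set, a neighbored $3$-set, or a one-sided paired $4$-set. The remainder of the argument propagates this structural information one more edge up to $v$ via the edge $v v_0$.

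Since $v_0$ is the only child of $v$ in $T$, we have
\[
L^a(v) \;=\; L(v) \setminus F_{L^a(v_0)}(v),
\]
so the task reduces to showing $|F_{L^a(v_0)}(v)| \leq 4$. By Observation~\ref{obs:L-Lx} I may switch at $v$ without changing $|L^a(v)|$, and thus I assume without loss of generality that the edge $v v_0$ is positive. Then $c' \in F_{L^a(v_0)}(v)$ precisely when no element of $L^a(v_0)$ is a positive neighbor of $c'$ in ${\rm DSG}(K_6,M)$.

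For each possible shape of $L^a(v_0)$, the vertex-transitivity of ${\rm DSG}(K_6,M)$ reduces the verification to a single canonical representative. I expect both the neighbored $5$-set and the one-sided paired $4$-set cases to be comfortably strong: because these sets are large and well spread, only colors ``opposite'' to the defining vertex (its inverse together with that inverse's pair) fail to have a positive neighbor in the set, and a quick enumeration should give $|F_{L^a(v_0)}(v)| = 2$, hence $|L^a(v)| \geq 10$. The tight case is the neighbored $3$-set; taking the canonical representative consisting of a paired $2$-set on one side together with a single element on the opposite side (for instance $\{3^+, 4^+, 2^-\}$), the verification should show that the forbidden set consists of exactly four colors: the pair already appearing in $L^a(v_0)$ (here $\{3^+, 4^+\}$) together with the pair on the opposite side containing the isolated element (here $\{1^-, 2^-\}$). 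This yields exactly $|L^a(v)| \geq 8$.

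The main obstacle is the neighbored $3$-set case, which saturates the bound and therefore leaves no slack in the counting; here one must use the precise adjacency structure of ${\rm DSG}(K_6,M)$ (in particular that every color has exactly five positive and five negative neighbors, distributed symmetrically between the two sides) to confirm that no further color at $v$ is forbidden beyond the four identified. Taking the worst of the three subcases then yields $|L^a(v)| \geq 8$, as claimed.
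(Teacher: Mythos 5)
Your proof is correct. The paper states this lemma without giving a proof, but your argument — invoking the preceding lemma to pin down $L^a(v_0)$ as a neighbored $5$-set, a neighbored $3$-set, or a one-sided paired $4$-set, and then checking that the corresponding forbidden sets at $v$ have sizes $2$, $4$ and $2$ respectively — is exactly the intended route, and your three enumerations (forbidden sets $\{1^-,2^-\}$, $\{3^+,4^+\}\cup\{1^-,2^-\}$ and $\{5^-,6^-\}$ for the canonical representatives) agree with the computations the paper records in Lemma~\ref{oneEdgeRestriction}~\ref{P2-1.5}, \ref{P2-2} and \ref{P2-2.5}, with the neighbored $3$-set case giving the tight bound $12-4=8$.
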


\begin{figure}[h]
	\centering
	\begin{tikzpicture}[>=latex,
	roundnode/.style={circle, draw=black!90, very thick, minimum size=6mm, inner sep=0pt},
	squarednode/.style={rectangle, draw=red!60, very thick, minimum size=6mm, inner sep=0pt},
	] 
	\node [roundnode] (v) at (4,1){$v$};
	\node [roundnode] (v0) at (3,1){$v_0$};
	
	\node [roundnode] (v1) at (2,1.5){$v_1$};
	\node [squarednode] (v1') at (1,1.5){$v'_1$};
	\node [squarednode] (v2) at (2,0.5){$v_2$};
	\draw [line width =1.4pt, gray, -] (v2)--(v0)--(v); 
	
	\draw [line width =1.4pt, gray, -] (v1')--(v1)--(v0);
	
	\end{tikzpicture}
	\caption{Configuration from Lemma~\ref{lem:3-1}}
	\label{fig:3-1}
\end{figure}
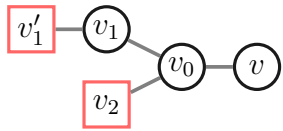

\section{List homomorphism of signed graphs to $(K_6, M)$} \label{sec:precoloring}

In this section, we study the ${\rm DSG}(K_6, M)$-list homomorphism of a given signed graph and develop tools of independent interest that will be used in Section~\ref{sec:K6}. 

In the following lemma, for a given edge $xy$, viewed as a rooted tree with $y$ being the root and $x$ being the leaf, we would like to bound the size of $F_x(y)$ in terms of the size of $L(x)$. Note that the larger the set $L(x)$ is, the smaller the set $F_x(y)$ becomes. Often we will use the following lemma on an edge $xy$ to evaluate $F_x(y)$ without explicitly stating that the edge $xy$ under consideration forms a tree with leaf $x$ and root $y$.

\begin{lemma}
	\label{oneEdgeRestriction}
	Let $xy$ be a signed edge and let $L$ be its ${\rm DSG}(K_6, M)$-list assignment. Then the following statements hold:
	\begin{enumerate}[leftmargin =3em, label=(\arabic*)]
	    \item \label{P2-0} If $L(x)$ is empty, then $F_x(y)$ is $C$.
	       
		\item \label{P2-0.5} If $L(x)$ consists of one element, then $F_x(y)$ is a paired $7$-set.
		
		\item \label{P2-1} If $L(x)$ is a paired $2$-set, then $F_x(y)$ is a paired $6$-set.
		
		\item \label{P2-1.5} If $L(x)$ is a paired $3$-set, then $F_x(y)$ is a paired set of size at most $4$. Moreover, if $L(x)$ is not one-sided, then $|(L(y)\setminus F_x(y))\cap C^+| \geq 4$ and $|(L(y)\setminus F_x(y))\cap C^-| \geq 4$. 
					
		\item \label{P2-2} If $L(x)$ is a paired $4$-set, then $F_x(y)$ is a paired set of size at most $4$. In particular, if $L(x)$ is not layered, then $F_x(y)=\emptyset$, and if $L(x)$ is one-sided, then $F_x(y)$ is a paired $2$-set.
			
		\item \label{P2-2.5} If $L(x)$ is a neighbored $5$-set, then $F_x(y)$ is a paired $2$-set.
		
		\item \label{P2-3} If $L(x)$ is a paired $6$-set, then $F_x(y)$ is a paired set of size at most $2$. In particular, if $L(x)$ is not layered or one-sided, then $F_x(y)=\emptyset$.
		
		\item \label{P2-4} If $L(x)$ is a paired $8$-set, then $F_x(y)=\emptyset$. 
	\end{enumerate}
\end{lemma}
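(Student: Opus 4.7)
The plan is to unify the eight parts via the identity
\[
F_x(y) = \{c \in C : L(x) \cap N_{\sigma(xy)}(c) = \emptyset\},
\]
where $N_{\sigma}(c)$ is the set of neighbors of $c$ in ${\rm DSG}(K_6, M)$ via edges of sign $\sigma$. The structural fact I would record first is that, for every $c \in C$, $N_+(c)$ is precisely a neighbored 5-set: it consists of the two pairs on the side of $c$ that do not contain $c$, together with the pair-mate of $c$ on the opposite side. Consequently, $C \setminus N_+(c)$ is a paired 7-set comprising $c$, its pair, its inverse, and the four cross-side vertices lying in pairs different from that of $c$. Because inverting a single endpoint of an edge of ${\rm DSG}(K_6, M)$ flips the edge's sign, replacing $L(x)$ by $L(x)^-$ and $\sigma(xy)$ by $-\sigma(xy)$ leaves $F_x(y)$ unchanged; since inversion preserves all combinatorial types of lists used in the lemma, I may assume WLOG that $\sigma(xy)=+$.

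With this in hand, each case reduces to a symmetry-reduced finite computation of $F_x(y) = C \setminus \bigcup_{c' \in L(x)} N_+(c')$. Parts (1) and (2) are immediate from the structural fact. Part (8) follows from a short counting argument: a paired 8-set must consist of four complete pairs among the six pairs of $C$ and hence omits at most two, whereas each $N_+(c)$ intersects three distinct pairs (two entirely, one as a singleton), so at least one of these three lies in $L(x)$, giving $L(x)\cap N_+(c)\neq \emptyset$ and hence $F_x(y)=\emptyset$. For parts (3), (6), and (7) I would reduce to one or two representatives under the pair-preserving automorphism group of ${\rm DSG}(K_6, M)$ and compute the union directly: (3) reduces to $L(x)=\{1^+,2^+\}$, giving $F_x(y)=\{1^+,2^+,3^-,4^-,5^-,6^-\}$; (6) reduces to $L(x)=N_+(1^+)$ (since every neighbored 5-set is a positive or negative neighborhood of a single vertex), giving $F_x(y)=\{1^-,2^-\}$; and (7) splits into three orbit representatives---one-sided $C^+$, layered two-sided $\{1^+,2^+,3^+,4^+,5^-,6^-\}$, and non-layered $\{1^+,2^+,1^-,2^-,3^+,4^+\}$---yielding $F_x(y)$ of sizes $0$, $2$, and $0$ respectively.

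The main obstacle is the paired 4-set case (5), which has the richest sub-case structure. Up to the group action the representatives are: the full layer $\{1^+,2^+,1^-,2^-\}$, which is non-layered in the paper's sense and gives $F_x(y)=\emptyset$; the one-sided layered set $\{1^+,2^+,3^+,4^+\}$, giving the paired 2-set $\{5^-,6^-\}$; and the two-sided layered set $\{1^+,2^+,3^-,4^-\}$ (equivalently $\{1^+,2^+,5^-,6^-\}$), giving $F_x(y)=L(x)$ itself, a paired 4-set. Collectively these yield $|F_x(y)|\leq 4$ and paired, from which both ``in particular'' clauses follow. For (4), the representatives $\{1^+,2^+,3^+\}$ and $\{1^+,2^+,3^-\}$ yield $F_x(y)=\{3^-,5^-,6^-\}$ and $F_x(y)=\{1^+,2^+,3^-,4^-\}$ respectively; the latter has exactly two colors on each side, confirming that $L(y)\setminus F_x(y)$ retains at least four colors on either side when $L(x)$ is two-sided. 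The principal care throughout is in confirming that each list of representatives exhausts all orbit types under the pair-preserving automorphisms of ${\rm DSG}(K_6, M)$.
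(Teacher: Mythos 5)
Your overall strategy --- reduce to a positive edge via the inversion symmetry and then compute $F_x(y)=C\setminus\bigcup_{c'\in L(x)}N_+(c')$ on orbit representatives --- is exactly the paper's approach. Your structural description of $N_+(c)$, and your computations for parts (1), (2), (3), (5), (6) and (7), all check out. For part (8) you argue slightly differently from the paper (you count that the three pairs met by each $N_+(c)$ cannot all lie among the two pairs omitted by a paired $8$-set, whereas the paper notes that any four of the six pairs must cover some layer entirely and that a full layer already forces $F_x(y)=\emptyset$); both arguments are correct and equally elementary.

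There is, however, a concrete gap in part (4): your list of orbit representatives for paired $3$-sets is not exhaustive. A paired $3$-set is a complete pair together with one further element $z$ whose own pair lies outside the set, and $z$ falls into three orbits, not two: same side in a different layer (your $\{1^+,2^+,3^+\}$), opposite side in a different layer (your $\{1^+,2^+,3^-\}$), and opposite side in the \emph{same} layer, e.g.\ $\{1^+,2^+,1^-\}$. This last type contains a vertex together with its inverse (its unique non-neighbor), so no automorphism of ${\rm DSG}(K_6,M)$ can carry it to either of your two representatives. The paper treats it explicitly and finds $F_x(y)=\{1^+\}$, which is indeed a paired set of size at most $4$ and leaves at least four colors available on each side, so the lemma is unaffected --- but your argument as written never verifies the ``moreover'' clause for this two-sided case, and since the entire content of the proof is an exhaustive orbit check, the omission needs to be repaired. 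Everywhere else your representative lists do exhaust the orbits.
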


\begin{proof}
     We assume that $xy$ is a positive edge. 
	
	In the first claim, $L(x)=\emptyset$ means that there is no valid color for $x$, in other words, all colors are forbidden at $y$. In the second claim, we may assume $L(x)=\{1^+\}$, then $F_x(y) = \{1^+,2^+,1^-,$ $3^-,4^-,5^-,6^-\}$ which is of size seven. For the third claim, without loss of generality, we may assume our paired $2$-set $L(x) = \{1^+,2^+\}$, then $F_x(y) = \{1^+,2^+,3^-,4^-,5^-,6^-\}$. 
	
	Next, suppose that $L(x)$ is a paired $3$-set. Without loss of generality, we only consider three possibilities $L(x) = \{1^+,2^+,3^+\}$, $L(x) = \{1^+,2^+,1^-\}$, and $L(x) = \{1^+,2^+,3^-\}$. We easily obtain that for the first possibility $F_x(y)=\{3^-,5^-,6^-\}$, for the second possibility $F_x(y)=\{1^+\}$, and for the last possibility $F_x(y)=\{1^+,2^+,3^-,4^-\}$. Observe that $|(L(y)\setminus F_x(y))\cap C^+| \geq 4$ and $|(L(y)\setminus F_x(y))\cap C^-| \geq 4$ when $L(x)$ is not one-sided.

	Suppose now $L(x)$ is a paired set of size $4$. Again without loss of generality, we only need to consider three cases $L(x)= \{1^+,2^+,3^+,4^+\}$ for $L(x)$ being one-sided, $L(x) = \{1^+,2^+,1^-,2^-\}$ for $L(x)$ being not layered, and $L(x) = \{1^+,2^+,3^-,4^-\}$ for $L(x)$ being layered but not one-sided. For the first case $F_x(y)=\{5^-,6^-\}$, for the second case $F_x(y)=\emptyset$, and for the last case $F_x(y) = \{1^+,2^+, 3^-,4^-\}$.
	
	Suppose $L(x)$ is a neighbored $5$-set, say $L(x) = \{1^+,2^+,3^+,4^+,5^-\}$, we easily get $F_x(y) = \{5^-,6^-\}$.
	
	Suppose $L(x)$ is a paired set of size $6$. Without loss of generality, we consider three possibilities $L(x) = C^+$ for $L(x)$ being one-sided, $L(x) = \{1^+,2^+,3^+,4^+,1^-,2^-\}$ for $L(x)$ being not layered, and $L(x) = \{1^+,2^+,3^+,4^+,5^-,6^-\}$ for $L(x)$ being layered but not one-sided. For the first two possibilities $F_x(y)= \emptyset$, and for the last possibility $F_x(y)= \{5^-,6^-\}$.
	
	It is an easy observation that if $L(x)$ contains four elements of a layer, then $F_x(y)=\emptyset$. This implies that $F_x(y)=\emptyset$ when $L(x)$ is a paired set of size at least $8$.
\end{proof}

The next observation follows from Lemma~\ref{oneEdgeRestriction}~\ref{P2-0.5} and \ref{P2-1.5} easily. 

\begin{observation}\label{obs:8-set}
Let $xy$ be a signed edge. If $L(x)$ consists of one color, then $C\setminus F_x(y)$ is a neighbored $5$-set. If $L(x)$ is a neighbored $3$-set, then $C\setminus F_x(y)$ is a paired $8$-set containing neither $C^+$ nor $C^-$.
\end{observation}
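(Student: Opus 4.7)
The plan is to derive both claims by inspecting the representative cases already treated in the proof of Lemma~\ref{oneEdgeRestriction}, taking complements in $C$, and matching the resulting sets against the definitions of ``neighbored $5$-set'' and ``paired $8$-set'' given in Section~\ref{sec:Pre}. By the symmetry of ${\rm DSG}(K_6,M)$ (which acts transitively on $C$, and in particular admits the global swap $x^{\alpha}\leftrightarrow x^{-\alpha}$), together with Observation~\ref{obs:L-Lx} to absorb the sign of $xy$, I may throughout assume that $xy$ is positive and work with a canonical representative for $L(x)$.

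For the first statement, I would apply Lemma~\ref{oneEdgeRestriction}\ref{P2-0.5} with $L(x)=\{1^+\}$. The explicit calculation in that proof gives $F_x(y)=\{1^+,2^+,1^-,3^-,4^-,5^-,6^-\}$, so $C\setminus F_x(y)=\{3^+,4^+,5^+,6^+,2^-\}$. This consists of the two matched pairs $\{3^+,4^+\}$ and $\{5^+,6^+\}$ on the positive side together with the single element $2^-$ on the negative side, i.e.\ a neighbored $5$-set as required; indeed this is precisely the set of positive neighbors of $1^+$, matching the remark in Section~\ref{sec:Pre} that a neighbored $5$-set is the set of vertices joined to a fixed vertex by positive edges.

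For the second statement, I would apply Lemma~\ref{oneEdgeRestriction}\ref{P2-1.5}. A neighbored $3$-set consists by definition of a matched pair on one side and a single element on the other, so up to symmetry I may take $L(x)=\{1^+,2^+,3^-\}$. This is exactly the third case treated in the proof of part~\ref{P2-1.5}, yielding $F_x(y)=\{1^+,2^+,3^-,4^-\}$. Hence
\[
C\setminus F_x(y)=\{3^+,4^+,5^+,6^+,1^-,2^-,5^-,6^-\},
\]
which decomposes into the four matched pairs $\{3^+,4^+\}$, $\{5^+,6^+\}$, $\{1^-,2^-\}$, $\{5^-,6^-\}$, giving a paired set of size $8$. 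Moreover $1^+,2^+\notin C\setminus F_x(y)$ shows $C^+\not\subseteq C\setminus F_x(y)$, and $3^-,4^-\notin C\setminus F_x(y)$ shows $C^-\not\subseteq C\setminus F_x(y)$. No real obstacle arises: once one observes that the proof of Lemma~\ref{oneEdgeRestriction} already treats one representative of each symmetry class, the observation reduces to inspecting the complement $C\setminus F_x(y)$ in these two specific cases.
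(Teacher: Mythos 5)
Your proposal is correct and follows exactly the route the paper intends: the paper gives no explicit proof, stating only that the observation ``follows from Lemma~\ref{oneEdgeRestriction}~\ref{P2-0.5} and \ref{P2-1.5} easily,'' and you have simply made that derivation explicit by taking complements of the sets $F_x(y)$ computed in those two cases and checking them against the definitions. The symmetry reduction to the representatives $\{1^+\}$ and $\{1^+,2^+,3^-\}$ is the same one already used in the proof of Lemma~\ref{oneEdgeRestriction}, so nothing further is needed.
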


The next two observations are also straightforward and we will use them frequently in the Section~\ref{sec:K6}. Note that a paired $8$-set, where all the elements compose two layers, do not contain a neighbored $5$-set.

\begin{observation}\label{obs:K2DifferentLayers}
Let $(K_2, \sigma)$ be a signed edge $xy$. Suppose that $L$ is a list assignment of $(K_2, \sigma)$ satisfying that each of $L(x)$ and $L(y)$ is either a neighbored $5$-set or a paired $8$-set. Then one can choose $c_x \in L(x)$ and $c_y \in L(y)$ such that $c_x$ and $c_y$ are in different layers and $m^*(c_xc_y)=\sigma(xy)$.
\end{observation}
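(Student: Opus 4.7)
My approach is to argue by contradiction using a concrete description of the sign-preserving different-layer adjacencies in ${\rm DSG}(K_6,M)$. Direct inspection of the construction gives two clean facts: (i) for any $c\in C^{\alpha}$ in layer $\ell$, the positive different-layer neighbors of $c$ are precisely the four colors of $C^{\alpha}$ lying in the two layers other than $\ell$, so the positive different-layer graph restricted to $C^{\alpha}$ is $K_{2,2,2}$ whose three parts are the three pairs of $C^{\alpha}$; (ii) the negative different-layer neighbors of $c\in C^{\alpha}$ are exactly the four colors of $C^{-\alpha}$ outside the layer of $c$.

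Set $A^{\pm}=L(x)\cap C^{\pm}$ and $B^{\pm}=L(y)\cap C^{\pm}$. A preliminary but crucial remark is that neither a neighbored $5$-set nor a paired $8$-set can be contained in a single side of ${\rm DSG}(K_6,M)$: indeed, $|C^{\pm}|=6$ is too small to hold an $8$-set, and a neighbored $5$-set contains, by definition, a singleton on the side opposite to its two pairs. Hence $A^{+}, A^{-}, B^{+}, B^{-}$ are all nonempty.

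Suppose, for contradiction, that no valid pair $(c_x, c_y)$ exists. When $\sigma(xy)=+$, fact (i) forces, for each $\alpha\in\{+,-\}$, that there is no edge in the $K_{2,2,2}$ on $C^{\alpha}$ between $A^{\alpha}$ and $B^{\alpha}$; since non-edges of $K_{2,2,2}$ lie within a single part and both sets are nonempty, $A^{\alpha}\cup B^{\alpha}$ must be contained in one pair of $C^{\alpha}$, giving $|A^{\alpha}|\leq 2$. When $\sigma(xy)=-$, fact (ii) says that for every $c_x\in A^{+}$, every $c_y\in B^{-}$ lies in the layer of $c_x$, and symmetrically for $A^{-}, B^{+}$; since $B^{+}, B^{-}\neq\emptyset$, each of $A^{+}$ and $A^{-}$ must be contained in a single layer, whence $|A^{\pm}|\leq 2$. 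In either case, $|L(x)|=|A^{+}|+|A^{-}|\leq 4$, contradicting $|L(x)|\geq 5$. The only subtlety is the empty-versus-nonempty bookkeeping among $A^{\pm}, B^{\pm}$, and this is precisely what the structural remark about neighbored $5$-sets and paired $8$-sets is designed to dispatch.
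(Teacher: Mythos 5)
Your argument is correct. The paper states this as a ``straightforward'' observation and supplies no proof, so there is nothing to compare against; your write-up is a complete and accurate verification. The two structural facts you isolate are exactly right: positive different-layer edges of ${\rm DSG}(K_6,M)$ stay within one side and form a $K_{2,2,2}$ on the three pairs of that side, while negative different-layer edges always cross sides and miss only the layer of the starting colour. Combined with the remark that neither a neighboured $5$-set nor a paired $8$-set can be one-sided, the counting $|A^{+}|+|A^{-}|\leq 4<5$ closes both cases cleanly, and it handles the paired $8$-set consisting of two full layers (which the paper explicitly warns contains no neighboured $5$-set) without any special treatment.
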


\begin{observation}\label{obs:3path}
Let $(P_3,\sigma)$ be a signed path $xzy$ and let $L$ be a list assignment where $L(z)=C$, $L(x)=\{c_x\}$ and $L(y) = \{c_y\}$. Then $(P_3,\sigma)$ is $L$-colorable unless one of the followings holds:
\begin{enumerate}[leftmargin =3em, label=(\arabic*)]
	\item $c_x$ and $c_y$ are in a same layer but on different sides, and $P_3$ is a positive path;
	\item $c_x$ and $c_y$ are in a same layer and on a same side, and $P_3$ is a negative path.
\end{enumerate}
\end{observation}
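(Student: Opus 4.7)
The plan is to reduce the problem to a direct question about adjacencies in ${\rm DSG}(K_6,M)$ and then resolve it by a short case analysis. A color $c_z$ witnesses $L$-colorability if and only if $c_z \in N^{\sigma(xz)}(c_x) \cap N^{\sigma(zy)}(c_y)$, where $N^{s}(c)$ denotes the set of colors adjacent to $c$ by an edge of sign $s$. As essentially computed inside Lemma~\ref{oneEdgeRestriction}~\ref{P2-0.5}, each such $N^{s}(c)$ is a neighbored $5$-set, so the task becomes: for which choice of $c_x$, $c_y$, $\sigma(xz)$, $\sigma(zy)$ is this intersection empty?

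First I would normalize the signature using Observation~\ref{obs:L-Lx}. Switching at $\{z\}$ flips both edge signs and leaves $L(z)=C$ invariant (since $C^{-}=C$); switching at $\{y\}$ flips $\sigma(zy)$ and replaces $c_y$ by its inverse $c_y^{*}$. Applying these switchings appropriately, I reduce to the case where both edges are positive, with $c_y$ replaced by $c_y^{*}$ exactly when the original path $xzy$ was negative. Thus it suffices to determine when $N^{+}(c_x) \cap N^{+}(c_y')$ is empty, where $c_y'$ equals $c_y$ in the positive-path case and $c_y^{*}$ in the negative-path case.

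Next, using the vertex-transitivity of ${\rm DSG}(K_6,M)$, I would take $c_x = 1^{+}$, so that $N^{+}(1^{+}) = \{3^{+}, 4^{+}, 5^{+}, 6^{+}, 2^{-}\}$. A direct computation of $N^{+}(c)$ for each of the twelve colors $c$ will show that $N^{+}(1^{+}) \cap N^{+}(c) = \emptyset$ precisely when $c \in \{1^{-}, 2^{-}\}$, i.e. when $c$ lies in the same layer as $1^{+}$ but on the opposite side; in every remaining case the intersection contains at least two colors. This single computation simultaneously establishes the ``if $L$-colorable'' direction and identifies the only obstructions.

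Finally, translating this characterization back through the switchings will yield both directions of the observation. When the original path is positive, $c_y' = c_y$, so the obstruction is $c_y$ being in the same layer as $c_x$ on the opposite side, which is case~(1); when the path is negative, $c_y' = c_y^{*}$, so the obstruction becomes $c_y^{*}$ in the same layer as $c_x$ on the opposite side, equivalently $c_y$ lying in the same layer and on the same side as $c_x$, which is case~(2). The main obstacle is not conceptual but bookkeeping: one must carefully track how switching at $\{y\}$ toggles the ``side'' of $c_y$ so that the two apparently different exceptional configurations emerge as the single normalized obstruction.
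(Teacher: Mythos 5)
Your proposal is correct, and it is essentially the argument the paper intends: the paper states this as an unproved ``straightforward'' observation, and your reduction via Observation~\ref{obs:L-Lx} to a positive--positive path followed by the check that $N^{+}(1^{+})\cap N^{+}(c)=\emptyset$ exactly for $c\in\{1^{-},2^{-}\}$ (with $c_y$ replaced by its inverse precisely when the path is negative) is the direct verification being taken for granted. All the individual claims you make (the neighborhoods being neighbored $5$-sets, the effect of switching at $y$ on the list, and the final case count) check out.
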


Next we have a few lemmas on list coloring of paths and cycles.

\begin{lemma}\label{lem:P_3with12,10,8}
Let $(P_3,\sigma)$ be a signed path $xyz$ and let $L$ be its list assignment satisfying one of the followings:
\begin{enumerate}[leftmargin =3em, label=(\arabic*)]
	\item $L(y)$ is a full list, $L(x)=\{c_x\}$ and $L(z)=\{c_z\}$ where $c_x$ and $c_z$ are in different layers;
	\item $L(y)$ is a paired $10$-set, $L(x)=\{c_x\}$ and $L(z)$ is a neighbored $5$-set;
	\item $|L(y)|\geq 5$ and each of $L(x)$ and $L(z)$ is a neighbored $5$-set;
\end{enumerate}
Then $(P_3,\sigma)$ is $L$-colorable.
\end{lemma}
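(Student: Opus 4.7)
The plan is to recast the existence of an $L$-coloring as the task of selecting a color $c_y\in L(y)$ outside $F_x(y)\cup F_z(y)$, and then to bound the sizes of these forbidden sets via Lemma~\ref{oneEdgeRestriction}. Cases (2) and (3) will fall to a short counting argument, while case (1) will require a structural analysis of neighbored $5$-sets in ${\rm DSG}(K_6,M)$.

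In case (2), Lemma~\ref{oneEdgeRestriction}\ref{P2-0.5} gives $|F_x(y)|=7$ and Lemma~\ref{oneEdgeRestriction}\ref{P2-2.5} gives $|F_z(y)|=2$, so
\[
|L(y)\setminus(F_x(y)\cup F_z(y))|\geq 10-(7+2)=1.
\]
In case (3), Lemma~\ref{oneEdgeRestriction}\ref{P2-2.5} gives $|F_x(y)|,|F_z(y)|\leq 2$, so
\[
|L(y)\setminus(F_x(y)\cup F_z(y))|\geq 5-(2+2)=1.
\]
In either case a valid image for $y$ exists.

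For case (1) the crude count yields only $12-7-7=-2$, so counting alone is insufficient, and this is the main obstacle. First, I would apply Observation~\ref{obs:L-Lx} to switch at $x$, $y$, or $z$ as needed so that both $xy$ and $yz$ become positive; this preserves $L(y)=C$ (the full list is invariant under inversion), the singleton structure of $L(x)$ and $L(z)$, and the hypothesis that $c_x$ and $c_z$ lie in different layers, since inversion acts within a layer. By Observation~\ref{obs:8-set}, $C\setminus F_x(y)$ and $C\setminus F_z(y)$ are then the neighbored $5$-sets consisting of the positive neighbors of $c_x$ and $c_z$, respectively.

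The crux is then to show that these two neighbored $5$-sets intersect. Recall that the positive neighbors of a vertex $c$ in ${\rm DSG}(K_6,M)$ consist of the four vertices outside the layer of $c$ on the same side as $c$, together with the unique vertex of $c$'s own layer on the opposite side that is not the inverse of $c$. If $c_x$ and $c_z$ lie on the same side, then the two vertices of the third layer (the one distinct from both the layer of $c_x$ and the layer of $c_z$) on that common side belong to both neighborhoods. If $c_x$ and $c_z$ lie on opposite sides, then the ``single'' vertex of $c_z$'s neighborhood sits in the layer of $c_z$ on the side of $c_x$, and since this layer differs from the layer of $c_x$ it also lies among the four ``bulk'' vertices of $c_x$'s neighborhood. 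Either way a common positive neighbor is produced and serves as the desired image of $y$.
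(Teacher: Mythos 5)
Your proof is correct. Cases (2) and (3) are sound: choosing $c_y\in L(y)\setminus\bigl(F_x(y)\cup F_z(y)\bigr)$ does yield an $L$-coloring because $x$ and $z$ are non-adjacent, and the size bounds $7+2=9<10$ and $2+2=4<5$ follow from Lemma~\ref{oneEdgeRestriction}~\ref{P2-0.5} and \ref{P2-2.5}. Case (1) is also correct: after switching, the two sets $C\setminus F_x(y)$ and $C\setminus F_z(y)$ are the positive neighborhoods of $c_x$ and $c_z$, and your two-case check (same side versus opposite sides) that these neighbored $5$-sets meet whenever $c_x$ and $c_z$ lie in different layers is accurate.

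The route differs from the paper's in two places, both to your credit in terms of uniformity. For case (1) the paper simply declares the claim to be a restatement of Observation~\ref{obs:3path}, which is itself stated without proof; you instead prove the underlying geometric fact directly, namely that the positive neighborhoods of two vertices in different layers of ${\rm DSG}(K_6,M)$ intersect. For case (2) the paper fixes $c_x=1^+$ and runs a case analysis on which colors the neighbored $5$-set $L(z)$ contains, choosing $c_z$ first and then locating a compatible $c_y$ in the paired $10$-set; your single counting step via Lemma~\ref{oneEdgeRestriction} replaces that analysis entirely and is the cleaner argument. Case (3) is essentially identical in both treatments. The only cost of your approach is that in case (1) you re-derive material the paper has already packaged into Observations~\ref{obs:3path} and \ref{obs:8-set}; citing Observation~\ref{obs:3path} (as the paper does) would shorten that case to one line, but your self-contained version is perfectly valid.
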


\begin{proof}
The first case is just a restatement of Observation~\ref{obs:3path}.
To prove the other two cases, by Observation~\ref{obs:L-Lx}, we may assume both edges are positive. For the second case, without loss of generality, we may assume that $c_x=1^+$. If $L(z)$ contains one of $3^-, 4^-, 5^-, 6^-$, say $5^-$, without loss of generality, then we take $c_z=5^-$. Since $L(y)$ is a paired $10$-set, it contains one of $2^-$ or $6^+$ either of which completes the coloring. If $L(z)$ contains none of $3^-, 4^-, 5^-, 6^-$, then, as it is a neighbored $5$-set, it must contain $3^+, 4^+, 5^+, 6^+$. If the choice of $c_z=3^+$ does not work, then $L(y)$ is the complement of $\{5^+, 6^+\}$, in which case taking $c_z=5^+$ would work.

For the third claim, without loss of generality, we assume $L(x)=\{1^+, 2^+, 3^+, 4^+, 5^-\}$. Then except for $5^-$ and $6^-$ every vertex of ${\rm DSG}(K_6,M)$ is connected by a positive edge to at least one vertex in $L(x)$. Similarly, there is only one pair, say $a,b$, of vertices of ${\rm DSG}(K_6,M)$ which is not connected by a positive edge to at least one vertex in $L(z)$. Since $|L(y)|\geq 5$, there is a color in $L(y)$ different from $5^-, 6^-,a,b$. Assigning this color to $y$, we can find choices for $x$ and $z$.
\end{proof}

\begin{lemma}\label{2.5-5}
     Let $(K_2, \sigma)$ be a signed edge $uv$ and let $L$ be its list assignment where $L(u)$ is a neighbored $5$-set and $L(v)$ is either $C^+$ or $C^-$. Then there exists a choice $c_u \in L(u)$ and a $4$-subset $L'(v)$ of $L(v)$ such that for every $c_v \in L'(v)$, $m^*(c_uc_v)=\sigma(uv)$ .
\end{lemma}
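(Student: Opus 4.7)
The plan is a short, direct case analysis resting on one structural observation about ${\rm DSG}(K_6,M)$. For every vertex $c$ lying on side $C^{\alpha}$ (with $\alpha\in\{+,-\}$), the positive neighbors of $c$ that lie in $C^{\alpha}$ form the set $C^{\alpha}\setminus\{c,Pair(c)\}$ of size exactly $4$. Dually, because cross-edges of ${\rm DSG}(K_6,M)$ carry the opposite sign to the corresponding edges in $V^{+}$, the negative neighbors of $c=x^{\alpha}$ lying in $C^{-\alpha}$ form $C^{-\alpha}\setminus\{x^{-\alpha},Pair(x)^{-\alpha}\}$, again of size $4$. Both statements are immediate from the construction of ${\rm DSG}(K_6,M)$.

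First I would reduce to the case $L(v)=C^{+}$ by applying the automorphism $x^{\alpha}\mapsto x^{-\alpha}$ of ${\rm DSG}(K_6,M)$. This map exchanges $C^{+}$ and $C^{-}$, preserves every edge sign, and sends neighbored $5$-sets to neighbored $5$-sets, so the hypotheses of the lemma carry over unchanged. With $L(v)=C^{+}$ fixed, I would split on the sign of $\sigma(uv)$. If $\sigma(uv)=+$, take any $c_u\in L(u)\cap C^{+}$ and set $L'(v)=C^{+}\setminus\{c_u,Pair(c_u)\}$; by the observation every element of $L'(v)$ is joined to $c_u$ by a positive edge. If $\sigma(uv)=-$, take any $c_u\in L(u)\cap C^{-}$, write $c_u=x^{-}$, and set $L'(v)=C^{+}\setminus\{x^{+},Pair(x)^{+}\}$; every element of $L'(v)$ is then joined to $c_u$ by a negative edge.

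The only remaining point is that such a $c_u$ actually exists in $L(u)$ in both subcases. This is immediate from the definition of a neighbored $5$-set: any such set consists of two pairs on one side together with a single extra vertex on the other side, so it meets each of $C^{+}$ and $C^{-}$.

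There is no substantive obstacle here. The step most prone to a slip is the sign bookkeeping inside the structural observation, since it dictates on which side of $L(u)$ the vertex $c_u$ must be chosen for each sign of $\sigma(uv)$; once that is correctly recorded, the lemma follows immediately.
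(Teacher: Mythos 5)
Your proof is correct and follows essentially the same route as the paper: pick $c_u$ on the side of $L(v)$ when $\sigma(uv)$ is positive and on the opposite side otherwise, then observe that $c_u$ has exactly four neighbors of the required sign inside $L(v)$. The paper states this in two lines without the explicit automorphism reduction or the spelled-out neighbor sets, but the argument is the same.
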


\begin{proof}
    As a neighbored $5$-set intersecting both $C^+$ and $C^-$, if $\sigma(uv)$ is positive, then we choose $c_u\in L(u)\cap L(v)$, otherwise we choose $c_u\in L(u)\cap (C\setminus L(v))$. Then $c_u$ has four neighbors in $L(v)$ which are adjacent to it by edges of sign $\sigma(uv)$.
\end{proof}

\begin{lemma}\label{outdegree2restriction}
	Let $(P_3,\sigma)$ be a signed path $xzy$ with $\sigma(xz)=\alpha$ and $\sigma(zy)=\beta$. Given a layered $6$-set $X$, for every $c_x \in X$ and $c_y \in X^{\alpha \beta}$, there exists $c_{z} \in C$ such that $m^*(c_zc_x)=\sigma(zx)$ and $m^*(c_zc_y)=\sigma(zy)$.
\end{lemma}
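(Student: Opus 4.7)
The plan is to reduce to an all-positive instance via switching and then finish by a short case analysis. By Observation~\ref{obs:L-Lx}, I would switch at $x$ whenever $\alpha=-$ and at $y$ whenever $\beta=-$; each such switching flips the corresponding edge sign and replaces that vertex's list with its inverse set. A check of the four sign patterns $(\alpha,\beta)\in\{+,-\}^2$ shows that after these switchings both edges are positive and the new lists at $x$ and $y$ both equal a single set $Y\in\{X,X^{-}\}$. Since the inverse of a layered $6$-set is again a layered $6$-set (inversion preserves layers and the pair relation), $Y$ is layered. The prescribed colors $c_x,c_y$ are replaced by their inverses at the vertices where a switch happened, but both still lie in $Y$. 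Hence it suffices to prove: for any $c_x,c_y\in Y$, there exists $c_z\in C$ joined to both by positive edges in ${\rm DSG}(K_6,M)$.

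I would then record one structural observation: a layered $6$-set $Y$ cannot contain both a vertex $v$ and its inverse $v^{-}$. Indeed, $v$ and $v^{-}$ lie in a common layer, and layeredness forces $\{v,v^{-}\}$ to be exactly $Y$'s intersection with that layer; but then neither $v$ nor $v^{-}$ has its pair in $Y$, giving two unpaired elements and contradicting the paired condition. Therefore any two elements of $Y$ are either equal or adjacent in ${\rm DSG}(K_6,M)$. A direct case analysis then completes the argument: if $c_x=c_y$, any of its five positive neighbors works; if $c_x\ne c_y$ lie in a common layer, the paired-and-layered structure forces $c_y$ to be the pair of $c_x$ (e.g.\ $c_x=1^+,\,c_y=2^+$ share the four positive neighbors $\{3^+,4^+,5^+,6^+\}$); and if $c_x,c_y$ lie in different layers, one splits into the same-side subcase (e.g.\ $1^+$ and $3^+$ share $\{5^+,6^+\}$) and the opposite-side subcase (e.g.\ $1^+$ and $3^-$ share $\{4^+,2^-\}$), each yielding at least two valid choices for $c_z$.

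The only real delicacy is the bookkeeping in the switching step, namely simultaneously tracking how the inversions act on the lists and on the prescribed colors $c_x,c_y$; I expect no serious obstacle here. Once that is in place, the layered condition on $Y$ is precisely the hypothesis that excludes the only obstruction $c_y=v^{-}$, and the remainder is a routine check of a handful of representative configurations.
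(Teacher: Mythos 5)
Your proof is correct and follows essentially the same route as the paper's: reduce to the all-positive case by switching (the paper invokes Observation~\ref{obs:L-Lx} to the same effect), note that two elements of a layered $6$-set are either a pair or lie in different layers, and then confirm a common positive neighbor exists --- the paper delegates this last step to Observation~\ref{obs:3path} while you verify it by direct enumeration. One cosmetic remark: your intermediate claim that any two elements of $Y$ are equal or adjacent is true but not what is actually needed (adjacent vertices such as $1^+$ and $2^-$ can still lack a common positive neighbor); fortunately your subsequent case analysis does not rely on it.
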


\begin{proof}
    By Observation~\ref{obs:L-Lx}, we assume that $\alpha=\beta=+$. Then $X^{\alpha\beta}=X$. For every $c_x, c_y\in X$, since $X$ is a layered $6$-set, either (1) $c_x=$ Pair$(c_y)$ or (2) $c_x$ and $c_y$ are in different layers. In both cases, Observation~\ref{obs:3path} assures that $c_z$ exists.
\end{proof}

\begin{lemma}\label{5-10path}
Let $(P_{2k}, \sigma)$ be a signed path where $P_{2k}=v_1v_2\cdots v_{2k}$ for $k\geq 1$ and let $L$ be a list assignment of $(P_{2k}, \sigma)$ satisfying one of the followings:
	\begin{enumerate}[leftmargin =3em, label=(\arabic*)]
		\item \label{3-10-5} $L(v_1)$ is a paired $3$-set, $L(v_{2k})$ is a one-sided $4$-set, and for $i \in \{2, \ldots, 2k-1\}$, $L(v_i)$ contains a neighbored $5$-set for odd $i$ and $|L(v_i)|\geq 10$ for even $i$. 
		\item \label{4-10-5} $L(v_{1})$ is a one-sided $4$-set, $L(v_{2k})$ is a paired $3$-set and for all $i \in \{2, \ldots, 2k-1\}$, $L(v_i)$ contains a neighbored $5$-set for odd $i$ and $|L(v_i)|\geq 10$ for even $i$. 
	\end{enumerate}
Then $(P_{2k}, \sigma)$ is $L$-colorable.
\end{lemma}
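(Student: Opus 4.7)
My plan is to induct on $k$, treating Case~1 in detail; Case~2 is handled by symmetric processing from $v_{2k}$. The base case $k=1$ is immediate from Lemma~\ref{oneEdgeRestriction}~\ref{P2-2}: since $L(v_2)$ is a one-sided $4$-set, $F_{v_2}(v_1)$ is a paired $2$-set, so $L(v_1)\setminus F_{v_2}(v_1)\neq\emptyset$ and one can choose a valid $c_1$ together with a compatible $c_2$.

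For the inductive step in Case~1, I process the initial segment $v_1v_2v_3$ and reduce to the path $v_3v_4\cdots v_{2k}$ on $2(k-1)$ vertices. The key is to select $c_1\in L(v_1)$ and $c_2\in L(v_2)$ so that the restricted list $L'(v_3):=L(v_3)\setminus F_{c_2}(v_3)$ contains a paired $3$-set; replacing $L(v_3)$ with such a paired $3$-set, the resulting subpath satisfies the Case~1 hypotheses with $v_3$ as the new initial vertex, and the inductive hypothesis finishes the coloring. To construct $c_1,c_2$, I fix a neighbored $5$-set $S\subseteq L(v_3)$ and let $w$ be the unique vertex of ${\rm DSG}(K_6,M)$ whose set of $\sigma(v_2v_3)$-neighbours equals $S$. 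If $c_2=w$ then $L'(v_3)\supseteq S$, a neighbored $5$-set; if $c_2=\mathrm{Pair}(w)$ then $L'(v_3)$ contains a one-sided $4$-subset of $S$; in both cases $L'(v_3)$ contains a paired $3$-set. Because $|L(v_2)|\geq 10$, at least one of $w,\mathrm{Pair}(w)$ lies in $L(v_2)$ outside the exceptional configuration $L(v_2)=C\setminus\{w,\mathrm{Pair}(w)\}$. For such a $c_2$, Observation~\ref{obs:8-set} tells us that $C\setminus F_{c_2}(v_1)$ is a neighbored $5$-set, and a short case analysis over the three subtypes of paired $3$-sets (one-sided; containing a layer; mixed, as in the proof of Lemma~\ref{oneEdgeRestriction}~\ref{P2-1.5}) verifies that $L(v_1)$ meets this neighbored $5$-set for at least one of the two candidate values of $c_2$, yielding the desired $c_1$.

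Case~2 is handled symmetrically by peeling off $v_{2k-1}v_{2k}$ from the far end, choosing these two colors so that the resulting restriction on $v_{2k-2}$ contains a one-sided $4$-set, then invoking the inductive hypothesis on $v_1\cdots v_{2k-2}$. The main obstacle I expect in both cases is the exceptional subcase where both $w$ and $\mathrm{Pair}(w)$ (respectively the analogous pair in Case~2) are excluded from the adjacent list; the way to overcome it is either to vary $S$ within $L(v_3)$ when $|L(v_3)|>5$, or to allow $c_2\notin\{w,\mathrm{Pair}(w)\}$ and verify by direct enumeration that the resulting smaller intersection of two neighbored $5$-sets still contains the required paired $3$-set, using the fact that two neighbored $5$-sets in ${\rm DSG}(K_6,M)$ intersect in a set of size $0,2,4,$ or $5$.
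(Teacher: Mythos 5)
Your base case matches the paper's, but the key claim in your inductive step is false, and the reduction cannot be repaired as described. You claim that for at least one of the two candidates $c_2\in\{w,\mathrm{Pair}(w)\}$, the paired $3$-set $L(v_1)$ meets the neighbored $5$-set of colors compatible with $c_2$. Consider $\sigma(v_1v_2)=\sigma(v_2v_3)=+$, $L(v_2)=C$, $L(v_3)=S=\{3^+,4^+,5^+,6^+,2^-\}$ (exactly a neighbored $5$-set, which hypothesis (1) permits), and $L(v_1)=\{3^-,4^-,5^-\}$, a one-sided paired $3$-set. Here $w=1^+$ and $\mathrm{Pair}(w)=2^+$, and the sets of colors compatible with them at $v_1$ are $\{3^+,4^+,5^+,6^+,2^-\}$ and $\{3^+,4^+,5^+,6^+,1^-\}$ respectively; both are disjoint from $L(v_1)$, so neither candidate admits a choice of $c_1$, even though both candidates lie in $L(v_2)$. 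Your two fallbacks also fail here: $S$ cannot be varied because $L(v_3)=S$ exactly, and for every $c_2\notin\{1^+,2^+\}$ the set of colors of $S$ compatible with $c_2$ has size at most $2$ (consistent with your own intersection observation, since sizes $4$ and $5$ occur only for $\mathrm{Pair}(w)$ and $w$), hence contains no paired $3$-set, so the inductive hypothesis cannot be restored. Note that the lemma does hold in this configuration, so what breaks is your method, not the statement.

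The defect is structural: committing to the colors $c_1,c_2$ before recursing destroys exactly the slack the lemma lives on. In the configuration above, every color of $S$ is admissible at $v_3$ (each extends backwards through some $c_2$ and then some $c_1$ --- indeed $F_{v_1}(v_2)=\{1^+,2^+,5^+\}$, and the surviving $9$-set at $v_2$ contains a full layer, so it forbids nothing at $v_3$), yet no single choice of $(c_1,c_2)$ leaves more than two admissible colors at $v_3$. This is why the paper's proof never fixes colors: it roots the path at $v_{2k-1}$ and propagates admissible sets using the forbidden-set bounds of Lemma~\ref{oneEdgeRestriction} --- a paired $3$-set forbids a paired set of size at most $4$ by~\ref{P2-1.5}, and the surviving paired $6$-set at the next even vertex forbids at most a paired $2$-set by~\ref{P2-3} --- so that lists of size at least $3$ survive at odd vertices and of size at least $6$ at even vertices, with a color chosen only once the propagation reaches the root. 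Your Case~2 reduction has the same flaw: after fixing a color of $v_{2k-1}$, only $10+5-12=3$ colors are guaranteed to remain at $v_{2k-2}$, and they need not form or contain a paired $3$-set, let alone a one-sided $4$-set. If you want to keep an inductive formulation, the hypothesis must quantify over admissible sets rather than committed colorings (e.g., ``the set of colors at $v_3$ extendable over $v_1v_2v_3$ misses at most a paired $2$-set of $L(v_3)$''), which is essentially the paper's propagation recast as induction.
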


\begin{proof}
For $k=1$, we consider a signed edge $v_1v_2$ with $v_1$ being its root. Let $L$ be a list assignment satisfying that $L(v_1)$ is a paired $3$-set and $L(v_2)$ is a one-sided $4$-set. By Lemma~\ref{oneEdgeRestriction}~\ref{P2-2}, $|F_{v_2}(v_1)|=2$. For any color $c\in L(v_1)$, we can color $v_1$ by the remaining color from $L(v_1)\setminus F_{v_2}(v_1)$.
	
For the first case with $k\geq 2$, consider $(P_{2k}, \sigma)$ as a rooted signed tree with root $v_{2k-1}$. Since $|L(v_1)|\geq 3$, $L^a(v_2)$ which is $L(v_2)\setminus F_{v_1}(v_2)$, by Lemma \ref{oneEdgeRestriction} \ref{P2-1.5}, contains a paired set of size at least $6$. By Lemma \ref{oneEdgeRestriction} \ref{P2-3} and Lemma \ref{oneEdgeRestriction} \ref{P2-1.5}, we can propagate this and obtain that $|L^a(v_{2i-1})| = |L(v_{2i-1}) \setminus F_{v_{2i-2}}(v_{2i-1})| \geq 5-2= 3$ and $|L^a(v_{2i})|=|L(v_{2i})\setminus F_{v_{2i-1}}(v_{2i})| \geq 10-4= 6$ for $i\in\{2,\ldots,k-2\}$. 
Finally, by Lemmas \ref{oneEdgeRestriction} \ref{P2-2} and \ref{oneEdgeRestriction} \ref{P2-3}, for the root $v_{2k-1}$, we have $|L^a(v_{2k-1})|=|L(v_{2k-1}) \setminus (F_{v_{2k}}(v_{2k-1}) \cup F_{v_{2k-2}}(v_{2k-1}))|\geq 5-2-2=1$. 

For the second case, we similarly consider $(P_{2k}, \sigma)$ as a rooted signed tree with root $v_{2k-1}$. So $|L^a(v_2)|=|L(v_2)\setminus F_{v_{1}}(v_2)| \geq 10-2= 8$, we can propagate this and obtain that $|L^a(v_{2i})|=|L(v_{2i})\setminus F_{v_{2i-1}}(v_{2i})| \geq 10-2= 8$ and $|L^a(v_{2i+1})| = |L(v_{2i+1}) \setminus F_{v_{2i}}(v_{2i+1})| \geq 5-0= 5$ for $i\in\{2,\ldots,k-2\}$. For the root $v_{2k-1}$, we have $|L^a(v_{2k-1})|=|L(v_{2k-1}) \setminus (F_{v_{2k}}(v_{2k-1}) \cup F_{v_{2k-2}}(v_{2k-1}))|\geq 5-4-0=1$. This completes the proof.
\end{proof}

\begin{lemma}\label{lem:List-4-Cycle}
Let $(C_4, \sigma)$ be a signed $4$-cycle $xyzt$ and let $L$ be a list assignment where each of $L(x)$ and $L(t)$ is a neighbored $5$-set, and each of $L(y)$ and $L(z)$ is a paired $10$-set. Then $(C_4, \sigma)$ is $L$-colorable.
\end{lemma}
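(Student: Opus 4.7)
The plan is to reduce the problem via switchings and then perform a structural case analysis. By Observation~\ref{obs:L-Lx}, after switching at $y$, $z$, and $t$ as needed, I would assume $\sigma(xy) = \sigma(yz) = \sigma(zt) = +$, leaving only $\epsilon := \sigma(xt) \in \{+,-\}$; these switchings preserve the paired $10$-set property of $L(y), L(z)$ and the neighbored $5$-set property of $L(x), L(t)$, since the inversion $c \mapsto c^{-}$ respects both the pair- and the layer-structure of ${\rm DSG}(K_6,M)$. I would then write $L(x) = N^+(u)$, $L(t) = N^+(w)$ for the unique vertices $u,w\in C$ with these positive neighborhoods, and $L(y) = C \setminus \{p, \text{Pair}(p)\}$, $L(z) = C \setminus \{q, \text{Pair}(q)\}$.

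The main strategy is to first select $c_x \in L(x)$ and $c_t \in L(t)$ with $m^*(c_x c_t) = \epsilon$, and then extend to $c_y \in L(y) \cap N^+(c_x) =: A$ and $c_z \in L(z) \cap N^+(c_t) =: B$ with $m^*(c_y c_z) = +$. A short computation shows that $A$ is always of one of three types---a neighbored $5$-set, a one-sided paired $4$-set, or a neighbored $3$-set---with the type determined by the layer/side relationship between $c_x$ and $\{p, \text{Pair}(p)\}$; the analogous trichotomy holds for $B$. Moreover, at least $3$ of the $5$ choices of $c_x\in L(x)$ yield $|A|\geq 4$, and symmetrically for $c_t$ and $B$.

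For each combination of types for $(A, B)$, I would establish the existence of a positive edge between them. If either of $A, B$ is a neighbored $5$-set, then Observation~\ref{obs:K2DifferentLayers} (or Lemma~\ref{2.5-5} when the other set is a one-sided paired $4$-set) applied to the positive edge $yz$ produces the required $(c_y, c_z)$. If both $A, B$ are one-sided paired $4$-sets, a pigeonhole argument on the three matching pairs of $C$ yields a positive edge: a within-side one if $A, B$ lie on the same side, and a cross-side one through a commonly covered matching pair otherwise (since each such $4$-set covers two of the three matching pairs, they must share at least one). The remaining cases involving neighbored $3$-sets are resolved by a direct verification on a small number of configurations.

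The main obstacle will be compatibility: ensuring that for some valid $(c_x, c_t)$, the induced $(A, B)$ is amenable to the arguments above. This reduces to a finite case analysis driven by the relative positions of $u, w$ and the omitted pairs $\{p, \text{Pair}(p)\}, \{q, \text{Pair}(q)\}$ in the layer structure of ${\rm DSG}(K_6,M)$. The flexibility in choosing $(c_x, c_t)$---several compatible pairs always exist by Observation~\ref{obs:K2DifferentLayers}---together with the rich combinatorial structure of ${\rm DSG}(K_6,M)$ allows one to always locate a suitable choice, completing the argument.
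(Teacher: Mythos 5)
Your skeleton matches the paper's (switch so that $xy$, $yz$, $zt$ are positive, colour $x$ and $t$ first respecting the edge $xt$, then extend to $y$ and $z$), and your trichotomy for $A=L(y)\cap N^+(c_x)$ is correct: it is a neighbored $5$-set, a one-sided paired $4$-set, or a neighbored $3$-set, according as the pair missing from $L(y)$ meets $N^+(c_x)$ in $0$, $1$ or $2$ elements. However, the quantitative claim that drives your compatibility argument is false. Take $L(x)=\{1^+,2^+,3^+,4^+,5^-\}$ and $L(y)=C\setminus\{5^+,6^+\}$. For each $c_x\in\{1^+,2^+,3^+,4^+\}$ the set $N^+(c_x)$ contains the pair $\{5^+,6^+\}$ entirely, so $|A|=3$; only $c_x=5^-$ gives $|A|\geq 4$. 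Thus ``at least $3$ of the $5$ choices of $c_x$ yield $|A|\geq 4$'' fails badly ($1$ of $5$), and symmetrically for $c_t$. Since the unique good choices of $c_x$ and $c_t$ need not be compatible with the sign of the edge $xt$, the claimed flexibility evaporates. This matters because the bad case is genuinely bad: two neighbored $3$-sets can have no positive edge between them at all (e.g.\ $A=\{3^+,4^+,2^-\}$ and $B=\{3^+,4^+,1^-\}$ --- every cross pair is either a non-edge or negative). So the final step, which you defer to ``a finite case analysis,'' is exactly the crux of the lemma, and the tool you propose for it does not work.

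For comparison, the paper resolves the crux by choosing $c_x$ and $c_t$ in \emph{different layers} (on the same side when $xt$ is positive, on different sides when it is negative), which is always possible for two neighbored $5$-sets. In the positive case this yields three candidate colourings of $(y,z)$ --- e.g.\ for $c_x=1^+$, $c_t=3^+$ the pairs $(2^-,4^-)$, $(4^+,5^+)$, $(6^+,2^+)$ --- whose $y$-entries lie in three distinct matching pairs and likewise for the $z$-entries; since each of $L(y)$ and $L(z)$ omits only one matching pair, at most two of the three candidates are destroyed. The negative case needs a second round with a differently positioned $(c_x,c_t)$, but the same ``pairwise pair-disjoint candidates versus a single missing pair'' counting closes it. If you want to salvage your argument, you should replace the false counting claim with an analysis of which positions of the missing pairs of $L(y)$ and $L(z)$ relative to $L(x)$ and $L(t)$ force both $A$ and $B$ to be neighbored $3$-sets, and show directly that in those positions another admissible $(c_x,c_t)$ exists; as written, the proof has a gap.
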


\begin{proof}
By switching, if necessary, we may assume that $xy$, $yz$ and $zt$ are all positive edges. If, after these switchings, $xt$ is also a positive edge, then we choose colors $c_x\in L(x)$ for $x$ and $c_t\in L(t)$ for $t$ such that they are in different layers but on a same side. This is possible because $L(x)$ and $L(t)$ are both neighbored $5$-sets. We may then assume, without loss of generality, $c_x=1^+$ and $c_t=3^+$. Then on ${\rm DSG}(K_6, M)$ the possible choices for the pair of two vertices $(y, z)$ are: $( 2^-, 4^-)$, $(4^+, 5^+)$ and $(6^+, 2^+)$. But since in each of $L(y)$ and $L(z)$ only one pair is missing, at least one of these three possibilities works.

Thus we may assume $xt$ is a negative edge. We may always have a choice for $c_x\in L(x)$ and $c_t\in L(t)$ such that they are on different sides and in different layers. Without loss of generality, assume $c_x=1^+$ and $c_t=3^-$. Then the option to extend $xyzt$-path is either coloring vertex $y$ with $2^-$ and vertex $z$ from $\{5^-, 6^-\}$, or coloring vertex $z$ with $4^+$ and vertex $yc$ from $\{5^+, 6^+\}$. Therefore, if $(C_4, \sigma)$ is not $L$-colorable, then either $L(y)=\{1^-,2^-\}^c, L(z)=\{3^+,4^+\}^c$ or $L(y)=\{5^+, 6^+\}^c, L(z)=\{5^-, 6^-\}^c$. However, as $L(x)$ and $L(t)$ are both neighbored $5$-sets, we also have one of the two following possibilities for $c_x$ and $c_t$: either they form a pair, or $c_x\in C^-$ and $c_t\in C^+$. Based on whichever the possibility, we will have a choice for $y$ and $z$ this time.
\end{proof}

\begin{lemma}\label{lem:evencycle}
Let $(C_{2k},\sigma)$ be a signed even cycle $v_1v_2\cdots v_{2k}$ with $k\geq 2$ and let $L$ be a list assignment of $(C_{2k}, \sigma)$ satisfying one of the followings:
	\begin{enumerate}[leftmargin =3em, label=(\arabic*)]
		\item \label{even1} For even values of $i$, $L(v_i)$ is a neighbored $5$-set, and for odd values, it is a paired $10$-set.  
		\item \label{even2} $L(v_1)=C$, $L(v_2)$ is a paired $8$-set having four elements on each side, and $L(v_{2k})$ is a neighbored $5$-set. For other vertices $v_i$, if $i$ is even then $L(v_i)$ is a paired $10$-set and if $i$ is odd then $L(v_i)$ is a neighbored $5$-set. 

		\item \label{even3} $L(v_1)=C$, $L(v_2)$ and $L(v_{2k})$ are neighbored $5$-sets, $L(v_3)$ is a paired $8$-set having four elements on each side. For other vertices, if any, $L(v_i)$ is a neighbored $5$-set if $i$ is odd and $L(v_i)$ is a paired $10$-set otherwise. 
	\end{enumerate}
Then $(C_{2k}, \sigma)$ is $L$-colorable.
\end{lemma}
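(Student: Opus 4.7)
The plan is to treat the three configurations separately, reducing each cycle-coloring problem to a path-coloring problem where the earlier lemmas apply. Throughout, I would propagate admissible sets $L^a$ along rooted trees using Lemma~\ref{oneEdgeRestriction}, and exploit either Lemma~\ref{5-10path} for even-length paths with the right endpoint structure, Lemma~\ref{lem:List-4-Cycle} for the $4$-cycle base case, or Observation~\ref{obs:3path} for the final closing step.

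For case~\ref{even1}, the base $k=2$ is exactly the hypothesis of Lemma~\ref{lem:List-4-Cycle}, so this follows immediately. For $k \geq 3$, I would precolor a well-chosen color $c \in L(v_{2k})$. This turns the cycle into a path $v_1 v_2 \cdots v_{2k-1}$ with modified endpoint lists: $L^a(v_i) = L(v_i) \setminus F_c(v_i)$ for $i \in \{1, 2k-1\}$, each the intersection of a paired $10$-set with the neighbored $5$-set $C \setminus F_c$ (Observation~\ref{obs:8-set}). A short case analysis on where the ``missing pair'' of the paired $10$-set lies relative to $C \setminus F_c$ shows that some choice of $c$ yields the structured endpoints (a paired $3$-set and a one-sided paired $4$-set) needed by Lemma~\ref{5-10path}~\ref{3-10-5}. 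If parity forces an odd-vertex reduced path, I would instead precolor the adjacent pair $v_{2k-1}, v_{2k}$ (compatible thanks to Observation~\ref{obs:K2DifferentLayers}) and apply Lemma~\ref{5-10path} to the even-length path $v_1 v_2 \cdots v_{2k-2}$.

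For cases~\ref{even2} and~\ref{even3}, the key additional ingredient is the full list $L(v_1) = C$, which yields maximum flexibility at $v_1$. Using Observation~\ref{obs:3path}, any coloring of the path $v_2 v_3 \cdots v_{2k}$ extends to $v_1$ unless the chosen pair $(c_2, c_{2k})$ falls into one of the two specific bad layer/side configurations listed there. I would propagate admissible lists from $v_{2k}$ backwards along the path, and use the paired $8$-set (at $v_2$ in case~\ref{even2}, at $v_3$ in case~\ref{even3}) to ensure that $c_2$ can always be chosen to avoid the single bad configuration that the backward propagation forces on $v_{2k}$. The ``four elements on each side'' property of the $8$-set, combined with Lemma~\ref{oneEdgeRestriction}~\ref{P2-2} and~\ref{P2-3}, guarantees two valid candidates for $c_2$ in different layers or on different sides; at most one of them lands in the forbidden configuration.

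The main technical obstacle is the case analysis in case~\ref{even1} for $k \geq 3$: one must verify that \emph{some} choice of $c \in L(v_{2k})$ produces admissible lists at $v_1$ and $v_{2k-1}$ whose structure actually matches the hypotheses of Lemma~\ref{5-10path}. This amounts to an enumeration of the few intersection patterns between a paired $10$-set and a neighbored $5$-set, based on the position of the missing pair. A secondary difficulty in cases~\ref{even2} and~\ref{even3} is tracking the layer/side information of colors removed at each propagation step, so as to verify that the paired $8$-set retains enough structure to dodge the final bad configuration at $v_1$; this is where the hypothesis ``four elements on each side'' plays the critical role.
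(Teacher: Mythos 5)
Your high-level strategy---break the cycle open, color the resulting path via Lemma~\ref{5-10path}, and close up at a vertex with a rich list using Observation~\ref{obs:3path}---is indeed the paper's strategy, but two of your concrete reductions fail. In the base case of~\ref{even1}, Lemma~\ref{lem:List-4-Cycle} places the two neighbored $5$-sets on \emph{adjacent} vertices of the $4$-cycle ($x$ and $t$ are joined by the edge $tx$), whereas in case~\ref{even1} with $k=2$ the $5$-sets sit at $v_2$ and $v_4$, which are antipodal; so that lemma does not apply, and the paper treats $k=2$ by the same argument as general $k$. For $k\geq 3$, precoloring $v_{2k}$ and $v_{2k-1}$ does not yield endpoint lists matching Lemma~\ref{5-10path}: the residual list at $v_{2k-2}$ is the intersection of its neighbored $5$-set with the neighbored $5$-set of colors compatible with the color of $v_{2k-1}$, and two neighbored $5$-sets can be disjoint (e.g.\ the positive neighborhoods of $6^+$ and of $6^-$) or meet in a paired $2$-set---neither a paired $3$-set nor a one-sided $4$-set. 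Observation~\ref{obs:K2DifferentLayers} controls the layers of the two precolors, not these intersections. The paper instead colors only $v_2$, merely \emph{restricts} $L(v_{2k})$ to a one-sided $4$-set, runs Lemma~\ref{5-10path} on $v_3\cdots v_{2k}$, and leaves the $10$-set vertex $v_1$ for last.

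The deeper gap is that you have no mechanism for the failure cases, which is where essentially all of the paper's work lies. When the first attempted path coloring cannot be closed at $v_1$, the paper deduces structural information about $L(v_1)$, $L(v_2)$ and $L(v_{2k})$ from that failure and cycles through several fallback colorings (three or four rounds in case~\ref{even1}, and an explicit sub-case split on $L(v_{2k})$ in cases~\ref{even2} and~\ref{even3}). Your argument for~\ref{even2} and~\ref{even3} assumes that the ``bad configuration forced on $v_{2k}$'' is fixed while $c_2$ varies over two candidates, so that one candidate must survive. But the color reached at $v_{2k}$ is the output of a path coloring that itself depends on the choice of $c_2$; switching $c_2$ changes the entire downstream propagation and can land in a different bad pair at $v_1$. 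Decoupling these two ends is exactly what the paper's case analysis (for instance, the split $c_1\in\{1^+,2^+\}$ versus $c_1\in\{5^+,6^+\}$ in case~\ref{even3}) is designed to do, and without an analogue of it your proof does not close.
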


\begin{proof}
	 By Observation~\ref{obs:L-Lx}, we may assume that $\sigma(v_{2k}v_1)=\sigma(v_2v_1)=\sigma(v_{2}v_{3})=+$ in all cases. We consider each case separately.

      \ref{even1}. Since $L(v_{2k})$ is a neighbored $5$-set, either $L(v_{2k})\cap C^+$ or $L(v_{2k})\cap C^-$ is a one-sided $4$-set. Without loss of generality, we assume that $|L(v_{2k})\cap C^+|=4$. Since $L(v_2)$ is also a neighbored $5$-set, $L(v_2)$ has at least one element in $C^+$, without loss of generality, let $1^+$ be one such an element and assign it to $v_2$. Then, using Lemma~\ref{oneEdgeRestriction}~\ref{P2-0.5}, update the list of colors available at $v_3$ to a neighbored $3$-set $L'(v_3)$. Furthermore, update the list of available colors at $v_{2k}$ to the one-sided 4-set $L'(v_{2k})=C^+\cap L(v_{2k})$ and leave the lists of other vertices as they were given. Applying Lemma~\ref{5-10path}~\ref{4-10-5} to the signed path $(P_{2k-2},\sigma)$ with $P_{2k-2}=v_3\cdots v_{2k}$ and with the modified list assignment given above, we color all the vertices of $P_{2k-2}$. If $v_{2k}$ is colored $1^+$ or $2^+$, then, as $L(v_1)$ is a 10-set, we will find a choice to extend the coloring to $v_1$ and we are done. Else, by symmetry among colors $3^+,4^+,5^+,6^+$, we may assume $v_{2k}$ colored $3^+$. If this coloring is not extendable to $v_1$, then $L(v_1)=C-\{5^+,6^+\}$. 

      If $\{5^+,6^+\}\cap L(v_2) \neq \emptyset$, then by choosing a color for $v_2$ from $\{5^+,6^+\}$ and repeating the same process, this time we are sure to have a choice to color $v_1$. Thus we have one of the two possibilities for $L(v_2)$: $i$. It contains $\{1^+,2^+,3^+,4^+\}$. $ii$. It is $\{1^+,3^-,4^-, 5^-,6^-\}$. If the latter, we choose a color for $L(v_{2k})$ from $C^-$, and repeat the previous process with $L'(v_{2k-1})$ being a neighbored 3-set and $L'(v_2)=\{3^-,4^-, 5^-,6^-\}$. Hence, we must have $\{1^+,2^+,3^+,4^+\}\subset L(v_2)$. By the symmetry of $v_{2k}$ and $v_{2}$, we also have $\{1^+,2^+,3^+,4^+\}\subset L(v_{2k})$. But then once again we repeat the original process by assigning the color in $L(v_2)\cap \{5^-,6^-\}$ to $v_2$ and taking  $L'(v_{2k})=\{1^+,2^+,3^+,4^+\}$. This time for each choice of color for $v_{2k}$ we will find a choice for $v_{1}$ which is not in $\{5^+,6^+\}$.
  
      \ref{even2}.  Since $L(v_{2k})$ is a neighbored $5$-set, without loss of generality, we assume that $|L(v_{2k})\cap C^+|=4$. Let $X=C^+$ and let $L'(v_2)=L(v_2)\cap X$. We update the list of available colors at $v_2$ to $L'(v_2)$, observing that it is a one-sided $4$-set. Considering the tree $v_2v_3$ rooted at $v_3$, by Lemma~\ref{oneEdgeRestriction}~\ref{P2-2}, since $L(v_3)$ is a neighbored $5$-set, we update the list of available colors at $v_3$ to a neighbored $3$-set $L'(v_3)$. Now set $L'(v_{2k})=X \cap L(v_{2k})$ and $L'(v_i)=L(v_i)$ for $i\in \{4, \ldots, 2k-1\}$. Applying Lemma~\ref{5-10path}~\ref{3-10-5} to the signed path $P_{2k-2}=v_3\cdots v_{2k}$ and with the modified list assignment given here, we color all the vertices of $P_{2k-2}$. Noting that the colors of $v_{2k}$ and $v_2$ are both chosen from $X$, and by Lemma~\ref{outdegree2restriction}, we can complete the coloring to $v_1$.

	   \ref{even3}. We first assume that for a color $c\in L(v_2)$, four positive neighbors (connected by positive edges) of $c$ are in $L(v_3)$. In that case, we color $v_2$ by $c$. We update $L(v_{2k})$ by removing colors in the same layer as $c$ and $L(v_3)$ by taking the four positive neighbors of $c$. To complete the coloring, we color the path $v_3 v_4\cdots v_{2k}$ by Lemma~\ref{5-10path}. Then by Lemma~\ref{lem:P_3with12,10,8}, we can find a color for $v_1$ and we are done. 

       If no such a choice of $c$ exists, then $L(v_2)\subset L(v_3)$, as otherwise any color in $L(v_2)\setminus L(v_3)$ would work. Thus without loss of generality, we assume $L(v_2)=\{1^+, 2^+,3^+,4^+,5^-\}$ and $L(v_3)=\{1^+, 2^+,3^+,4^+,3^-,4^-,5^-,6^-\}$. Next, we examine the choice of $3^+$ for $v_2$. This updates $L(v_3)$ to a neighbored 3-set. If $L(v_{2k})$ contains at most one of $3^-$ and $4^-$, then the updated list $L'(v_{2k})=L(v_{2k})\setminus \{3^-, 4^-\}$ contains a one-sided $4$-set and we are done by applying Lemma~\ref{5-10path}. Thus we assume $\{3^-, 4^-\}\subset L(v_{2k})$, this implies that $|L(v_{2k})\cap \{1^+,2^+, 5^+,6^+\}|=1$, let $c_1$ be the common element. We consider two cases:
    
        \begin{itemize}
        \item $c_1\in \{1^+,2^+\}$. Then we set  $L'(v_{2k})=\{3^-,4^-, 5^-,6^-\}$ and, by using Lemma~\ref{5-10path}, we color the path $v_4\cdots v_{2k}$. Then depending on the color of $v_4$, we choose a color for $v_3$ from $\{3^+,4^+,3^-,4^-\}$. Based on this choice, we color $v_2$ from $\{1^+, 2^+,5^-\}$. By Observation~\ref{obs:3path}, since $v_{2k}v_1v_2$ is a positive path, any of these choices for $v_{2k}$ and $v_2$ can be extended to $v_1$.

        \item $c_1\in \{5^+,6^+\}$. We set $L'(v_{2k})=\{3^-,4^-, c_1\}$, and consider the path $v_4\cdots v_{2k}$ (when $k=2$, this is a single vertex). By Lemma~\ref{5-10path} and with the list assignment $L(v_4),\ldots, L(v_{2k-1}), L'(v_{2k})$, this path admits a coloring. Let $c_2$ be the color of $v_{4}$. Of the elements in $\{3^+, 4^+, 3^-, 4^-\}$ at least one, say $c_3$, is adjacent to $c_2$ by a positive edge. Assign $c_3$ to $v_3$. If $c_3 \in \{3^+, 4^+\}$, then we choose a color from $\{1^+, 2^+\}$ for $v_2$; if $c_3\in \{3^-, 4^-\}$, then there exists a color from $\{3^+, 4^+, 5^-\}$ for $v_2$. In each case, we can choose a color for $v_2$ which is not in the same layer as the color of $v_{2k}$. By Lemma~\ref{lem:P_3with12,10,8}, we can extend this coloring to $v_1$ and complete the coloring of this even cycle.
    \end{itemize}

Thus in all cases we find a valid coloring.
\end{proof}

\begin{lemma}
	\label{lem:oddcycle}
	Let $(C_{2k+1},\sigma)$ be a signed odd cycle where $C_{2k+1}=v_1v_2\cdots v_{2k+1}$ and let $L$ be a list assignment of $(C_{2k+1}, \sigma)$ satisfying one of the followings:
	\begin{enumerate}[leftmargin =3em, label=(\arabic*)]
		\item \label{odd1} $L(v_{i})$ is a neighbored $5$-set for each even $i$ and $L(v_i)$ is a paired $10$-set for each odd $i$. 
		\item \label{odd2} $L(v_1)=C$, and $L(v_{2k+1})$ is a neighbored $5$-set. For other vertices, $L(v_i)$ is a neighbored $5$-set if $i$ is even and $L(v_i)$ is a paired $10$-set otherwise.  
	\end{enumerate} 
Then $(C_{2k+1}, \sigma)$ is $L$-colorable. 
\end{lemma}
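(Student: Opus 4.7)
For both cases, my plan is to fix a color at a single carefully chosen vertex and thereby reduce the problem to coloring a path of $2k$ vertices, to which Lemma~\ref{5-10path} applies.

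For Case \ref{odd1}, by Observation~\ref{obs:L-Lx} I may switch so that $\sigma(v_1 v_2) = \sigma(v_{2k+1} v_1) = +$. I choose $c_1 \in L(v_1)$ and reduce the problem to coloring the path $v_2 v_3 \cdots v_{2k+1}$ under the modified lists $L'(v_2) = L(v_2)\setminus F_{v_1}(v_2)$, $L'(v_{2k+1}) = L(v_{2k+1})\setminus F_{v_1}(v_{2k+1})$, and $L'(v_i) = L(v_i)$ for other $i$. Reindexing as $u_j := v_{j+1}$, the path has $2k$ vertices whose interior alternates between neighbored $5$-sets (odd $j$) and paired $10$-sets (even $j$), matching the parity structure required by Lemma~\ref{5-10path}. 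The technical heart is choosing $c_1$ so that the reduced endpoint lists fit the scheme of that lemma: one contains a paired $3$-set and the other a one-sided $4$-set. Each of $L(v_2)$, $L(v_{2k+1})$, $C\setminus F_{v_1}(v_2)$, $C\setminus F_{v_1}(v_{2k+1})$ is a neighbored $5$-set (using Observation~\ref{obs:8-set}), so each intersection $L'(v_2)$, $L'(v_{2k+1})$ falls into one of a few shapes (neighbored $5$-set, one-sided $4$-set, or neighbored $3$-set) determined by the alignment of their "$4$-sides". A case analysis on the $4$-sides of $L(v_2)$ and $L(v_{2k+1})$, using that $L(v_1)$ has at least $4$ elements in each of $C^+$ and $C^-$, shows a suitable $c_1$ always exists.

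For Case \ref{odd2}, with $L(v_1)=C$ and $L(v_{2k+1})$ a neighbored $5$-set, the strategy is analogous but the reduction is performed at the opposite end. I fix $c_{2k+1}\in L(v_{2k+1})$ suitably and apply Lemma~\ref{5-10path} to the path $v_1 v_2 \cdots v_{2k}$: since $L'(v_1) = C \setminus F_{v_{2k+1}}(v_1)$ is a neighbored $5$-set by Observation~\ref{obs:8-set}, it contains both a paired $3$-set and a one-sided $4$-set, which gives the flexibility to meet the endpoint hypotheses. The base case $k=1$ of Case \ref{odd2} is handled directly: by Observation~\ref{obs:K2DifferentLayers} applied to the edge $v_2 v_{2k+1}$ (picking colors in different layers) a valid color for $v_1$ is then always available in $L(v_1)=C$.

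The main obstacle I expect is the case analysis in Case \ref{odd1} when the $4$-sides of $L(v_2)$ and $L(v_{2k+1})$ lie on opposite sides of $C$: any single choice of $c_1$ then tends to help one endpoint and hurt the other, so the argument must use the paired structure of $L(v_1)$ across both sides simultaneously, possibly falling back to a choice of $c_1$ in the smaller of the two sides and exploiting that a paired $10$-set on that side has at least two pairs. Additionally, the base case $k=1$ of Case \ref{odd1} (a triangle) may require separate verification, since the path interior is empty and the problem collapses to consistently coloring the single edge $v_3 v_1$ after choosing $c_2$ via a direct computation with Lemma~\ref{oneEdgeRestriction}.
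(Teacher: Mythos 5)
There is a genuine gap, and it is the same one in both cases: your reduction colors a vertex and then intersects a \emph{neighbored $5$-set} on a neighbor with the neighborhood of the chosen color, and this intersection can be too small for Lemma~\ref{5-10path} no matter which color you pick. Concretely, in Case~\ref{odd1} take $\sigma(v_1v_2)=+$, $L(v_2)=\{1^+,2^+,3^+,4^+,5^-\}$ and $L(v_1)=C\setminus\{5^+,6^+\}$ (a legitimate paired $10$-set). By Observation~\ref{obs:8-set}, $C\setminus F_{c_1}(v_2)$ is the positive neighborhood of $c_1$, and the only vertices of ${\rm DSG}(K_6,M)$ positively adjacent to all of $1^+,2^+,3^+,4^+$ are $5^+$ and $6^+$; checking the remaining ten choices, every $c_1\in L(v_1)$ leaves $L'(v_2)=L(v_2)\cap(C\setminus F_{c_1}(v_2))$ with at most two elements (e.g.\ $c_1=1^+$ gives $\{3^+,4^+\}$, $c_1=1^-$ gives $\{2^+,5^-\}$, $c_1=5^-$ gives $\emptyset$). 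So $L'(v_2)$ never contains a paired $3$-set, let alone a one-sided $4$-set, and the endpoint hypotheses of Lemma~\ref{5-10path} cannot be met: the case analysis you defer to does not exist. The same failure occurs in Case~\ref{odd2} at the junction $v_{2k}v_{2k+1}$, where you intersect the neighbored $5$-set $L(v_{2k})$ with the neighborhood of $c_{2k+1}$, and now $c_{2k+1}$ ranges over only a neighbored $5$-set, giving even less room.

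The paper avoids this by cutting in the opposite direction. In Case~\ref{odd1} it first colors the $5$-set vertex $v_{2k}$ and restricts its $10$-set neighbor $v_{2k+1}$ using Lemma~\ref{2.5-5}: a paired $10$-set contains all of $C^+$ or all of $C^-$, so the restriction always leaves a one-sided $4$-set; the other cut, into $L(v_{2k-1})$, leaves a paired $3$-set by Lemma~\ref{oneEdgeRestriction}~\ref{P2-0.5}, and Lemma~\ref{5-10path} is then applied to the path $v_{2k+1}v_1v_2\cdots v_{2k-1}$. The moral is that one should cut from a neighbored $5$-set into a paired $10$-set, never the reverse. Case~\ref{odd2} is genuinely harder: the paper handles it by a page of case analysis in the style of Lemma~\ref{lem:evencycle}~\ref{even3}, distinguishing the possible shapes of $L(v_{2k+1})$ and $L(v_3)$ and re-coloring subpaths several times; your one-paragraph sketch inherits the gap above and substantially underestimates what is required. (Your base case $k=1$ of Case~\ref{odd2} is fine.)
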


\begin{proof}
	\ref{odd1}. Applying Lemma \ref{2.5-5} to the signed edge $v_{2k}v_{2k+1}$, since $L(v_{2k})$ is a neighbored $5$-set and $L(v_{2k+1})$ is a paired $10$-set (containing $C^+$ or $C^-$), we can assign a color $c_{v_{2k}}\in L(v_{2k})$ to $v_{2k}$ and choose a subset $L'(v_{2k+1})$ of $L(v_{2k+1})$ which is a one-sided $4$-set and have the property that for each $c\in L'(v_{2k+1})$ we have $m^*(c_{v_{2k}}c)=\sigma(v_{2k}v_{2k+1})$. Considering the signed edge $v_{2k}v_{2k-1}$, by Lemma \ref{oneEdgeRestriction}~\ref{P2-0.5}, $|L^a(v_{2k-1})| \geq 3$. We set $L'(v_{2k-1}):=L^a(v_{2k-1})$ and $L'(v_i)=L(v_i)$ for $i \in\{1,2, \ldots, 2k-2\}$. We may now apply Lemma~\ref{5-10path}~\ref{4-10-5} to the signed path $(P_{2k}, \sigma)$ where $P_{2k}=v_{2k+1}v_1v_2\cdots v_{2k-1}$ and we are done.
	
	\ref{odd2}. The proof of this case is similar as the proof of Lemma~\ref{lem:evencycle}~\ref{even3}. For $k=1$, by Observation \ref{obs:K2DifferentLayers}, we can always choose $c_{v_1}\in L(v_1)$ and $c_{v_3}\in L(v_3)$ which are in different layers such that the sign of $v_1v_3$ is preserved. By Lemma~\ref{lem:P_3with12,10,8}, this coloring can be extended to $v_2$. Thus we may assume $k\geq 2$ and, by Observation~\ref{obs:L-Lx}, we may assume that $\sigma(v_1v_2)= \sigma(v_1v_{2k+1})= \sigma(v_{2}v_{3})=+$. Furthermore, without loss of generality, we assume that $L(v_{2})=\{1^+,2^+,3^+,4^+,5^-\}$. 

    If $L(v_{2k+1})$ contains four elements of $C^+$, then after taking three of them as $L'(v_{2k+1})$ and setting $L'(v_2)=\{1^+,2^+,3^+,4^+\}$ while keeping the rest of the lists same, we may apply Lemma~\ref{5-10path} to color the path $v_2v_3\cdots v_{2k+1}$. This coloring then is extendable to $v_1$ by Lemma~\ref{outdegree2restriction}. 

    If $L(v_{2k+1})$ contains $\{5^-,6^-\}$, then we may take $L'(v_{2k+1})$ to consist of $5^-, 6^-$ and the only element of $L(v_{2k+1})$ in $C^+$, and complete the coloring as in the previous case. We may, therefore, assume $L(v_{2k+1})= \{1^-,2^-,3^-,4^-, c\}$ where $c\in \{5^+,6^+\}$.

    Next we consider the case that $\{1^-,2^-,3^-,4^-\}\subset L(v_3)$. By Lemma~\ref{oneEdgeRestriction}~\ref{P2-2}, there is a $3$-subset $L'(v_{2k})$ of $L(v_{2k})$ such that the signed edge $v_{2k}v_{2k+1}$ is $L'$-colorable for every choice $c_{v_{2k}} \in L'(v_{2k})$ and $L'(v_{2k+1})=\{1^-,2^-,3^-,4^-\}$. We may now first color the path $v_3v_4\cdots v_{2k}$ with list assignment $L'(v_3)=\{1^-,2^-,3^-,4^-\}$, $L'(v_{2k})$ defined above, and $L(v_i)$ for all other values of $i$ by Lemma~\ref{5-10path}. To complete the coloring, then we color $v_2$ with $5^-$, and $v_{2k+1}$ with a color from $\{1^-,2^-,3^-,4^-\}$. This coloring then is easily extendable to $v_1$. This, without loss of generality, we may assume that $L(v_3)=C\setminus \{1^-, 2^-\}$. 

   To complete the proof, we consider the list assignment on the path $v_4v_5\cdots v_{2k+1}$ where $v_{2k+1}$ is assigned $L'(v_{2k+1})=\{3^+, 4^+, 5^-\}$ and each other vertex is assigned $L(v_i)$. By Lemma~\ref{5-10path}, we have a list-coloring $\phi$ of this path. Given $\phi(v_4)$, one can always pick a color in $\{3^+, 4^+, 3^-, 4^-\}$ for $v_3$ such that the sign of the edge $v_3v_4$ is preserved. However, for any such choice, the coloring of $v_3$ and $v_{2k+1}$ can be extended to $v_1$ and $v_2$ by argument similar to the end of the proof of previous lemma. This concludes the proof.
\end{proof}

\section{Mapping signed graphs to $(K_{6},M)$}\label{sec:K6}

In order to prove Theorem~\ref{main-theorem}, it suffices to prove the following theorem.

\begin{theorem}\label{main-theorem2}
Let $(G, \sigma)$ be a signed graph with $\mad(G)<\frac{14}{5}$. Then $(G, \sigma)$ admits an edge-sign preserving homomorphism to ${\rm DSG}(K_6, M)$.
\end{theorem}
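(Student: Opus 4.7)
The plan is to proceed by contradiction. Take a minimum counterexample $(G,\sigma)$, meaning $(G,\sigma)$ admits no edge-sign preserving homomorphism to ${\rm DSG}(K_6,M)$ but every proper subgraph of $(G,\sigma)$ does. The argument then has two phases: (i) use the $L$-coloring machinery of Section~\ref{sec:precoloring} to rule out a list of local \emph{reducible} configurations, showing that none of them can appear in $G$; and (ii) apply discharging with initial charge $\mu(v)=d(v)-\tfrac{14}{5}$ at each vertex to deduce that a configuration-free graph cannot satisfy $\mad(G)<\tfrac{14}{5}$.

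For phase (i), first $G$ has minimum degree at least $2$: a vertex of degree at most $1$ can be deleted, its removal-homomorphism obtained by minimality, and the vertex extended, because by Lemma~\ref{oneEdgeRestriction}\ref{P2-0.5} one precolored neighbor forbids only a paired $7$-set of colors. More delicate reductions target \emph{threads} of degree-$2$ vertices: a sufficiently long thread between two vertices of degree $\geq 3$ is reducible, because after extending a homomorphism from the rest of $G$ to the endpoints' neighbors the induced lists along the thread have the shapes required (neighbored $5$-sets at the ends, alternating neighbored $5$-sets and paired $10$-sets internally) to invoke Lemma~\ref{5-10path}. A similar philosophy, applied to small subtrees of $2$-vertices hanging off a vertex (paralleling Figure~\ref{fig:3-1} and Lemmas~\ref{lem:2-Path}--\ref{lem:3-1}), produces further forbidden configurations; short cycles made largely of $2$-vertices are handled by Lemmas~\ref{lem:evencycle} and \ref{lem:oddcycle}.

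For phase (ii), since $\mad(G)<\tfrac{14}{5}$, the total initial charge $\sum_v \mu(v)$ is strictly negative. A $2$-vertex is short by $\tfrac{4}{5}$, a $3$-vertex has surplus $\tfrac{1}{5}$, a $4$-vertex has surplus $\tfrac{6}{5}$, and in general a $k$-vertex has surplus $k-\tfrac{14}{5}$. The discharging rules will move surplus from vertices of degree $\geq 3$ to adjacent $2$-vertices and along threads of $2$-vertices from their high-degree endpoints; a natural first approximation is that each $\geq 3$-vertex sends $\tfrac{2}{5}$ to each adjacent $2$-vertex, with possible adjustments based on the structure of the incident threads. The rules will be chosen so that the absence of the configurations banned in phase (i) forces every vertex to end with non-negative charge, contradicting the strict negativity of the total and completing the proof.

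The main obstacle will be phase (i): pinning down the \emph{exact} list of reducible configurations so that phase (ii) goes through at the tight threshold $\tfrac{14}{5}$, which leaves very little slack. Crude reductions such as ``three consecutive $2$-vertices are forbidden'' will not be enough; we will need reductions depending on the fine information provided in Section~\ref{sec:precoloring}, namely whether the lists induced at the ends of a thread after a partial coloring are neighbored $5$-sets, paired $10$-sets, one-sided $4$-sets, or paired $3$-sets, since this is precisely what determines whether Lemma~\ref{5-10path}, Lemma~\ref{lem:evencycle}, or Lemma~\ref{lem:oddcycle} applies. Once this case analysis is in place, the discharging itself should reduce to a short local verification at each vertex type.
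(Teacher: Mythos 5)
Your overall architecture (minimal counterexample, reducible configurations via the list-coloring lemmas, discharging from $d(v)-\tfrac{14}{5}$) matches the paper, but the two places where you locate the difficulty are not where the difficulty actually is, and the plan as stated would not close. First, threads of $2$-vertices are a non-issue: the very first reduction (Lemma~\ref{lem:weakvertices}) already forbids a $2$-vertex having a $2$-neighbor, since such a vertex is a subcase of a $3_2$-vertex and a single precolored neighbor leaves a neighbored $5$-set from which each remaining branch deletes at most a pair. So there are no threads of length $\ge 2$ to reduce, and Lemma~\ref{5-10path} is not used on threads at all. The real combinatorial core is the subgraph induced by $3$-vertices: the paper must forbid an \emph{unbounded} family of configurations, the ``poor paths'' (alternating $3_0$/$3_1$-paths whose two ends are $3_0$-vertices of the poorest types), and reducing these requires taking a \emph{minimum} poor path, proving it has no chords and no identified $2$-neighbors (Lemmas~\ref{cla1} and~\ref{cla2}, which is where the cycle lemmas~\ref{lem:evencycle} and~\ref{lem:oddcycle} are actually consumed), and only then extending a coloring along the resulting tree. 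Nothing in your sketch anticipates that the reducible configurations are not of bounded size.

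Second, your phase (ii) target --- ``every vertex ends with non-negative charge'' --- is unattainable at this threshold, and your candidate rule (``each $\ge 3$-vertex sends $\tfrac{2}{5}$ to each adjacent $2$-vertex'') already fails: a $3$-vertex has surplus only $\tfrac{1}{5}$, so a $2$-vertex with two $3$-neighbors must end at $-\tfrac{4}{5}$ under any locally affordable scheme. The paper's resolution is a two-level accounting: only $4^{+}$-vertices discharge (giving $\tfrac{2}{5}$ to $2$-neighbors and $\tfrac{1}{5}$ to $3_1$-neighbors), negativity is tolerated at individual $2$-vertices, and non-negativity is then proved for the \emph{total} charge on each connected component $H$ of the subgraph induced by $3^{-}$-vertices. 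That step needs the counting inequality $n_0(H)\ge n_1(H)$ of Lemma~\ref{lem:n_0-n_1}, whose proof is itself a second discharging argument inside $H$ that is valid only because poor paths have been excluded (Lemma~\ref{lem:RC30-31-chain}). This global, component-level bookkeeping --- and the poor-path reducibility that underwrites it --- is the missing idea; without it the local verification you describe cannot succeed at $\tfrac{14}{5}$.
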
 

To prove Theorem~\ref{main-theorem2}, we assume that $(G, \sigma)$ is a minimum counterexample, i.e. $\mad(G) < \frac{14}{5}$, it does not map to ${\rm DSG}(K_6, M)$ and it has as small as possible number of vertices. The proof is organized as follows. In Subsection~\ref{sec:forbidden-configurations}, we give a set of reducible configurations in a minimum counterexample $(G, \sigma)$. In Subsection~\ref{sec:discharging}, we use discharging arguments to show that at least one reducible configuration listed in Subsection~\ref{sec:forbidden-configurations} exists in $(G, \sigma)$, which is a contradiction and completes the proof.

\subsection{Reducible configurations}\label{sec:forbidden-configurations}

To better state the forbidden configuration, we use the following standard terminology: a vertex of degree $k$ maybe referred to as a $k$-vertex. Moreover, a \emph{$k^+$-vertex} is a vertex with degree at least $k$ and a \emph{$k^-$-vertex} is a vertex of degree at most $k$. A \emph{$k_i$-vertex} is a $k$-vertex with precisely $i$ neighbors of degree $2$. When proving that a configuration $F$ is forbidden, we consider $F$ together with all its neighbors that are precolored. A precolored neighbor, say $v$, of $F$ might see more than one vertex in $F$, however, for simplicity we will view such a configuration with multiple copies of $v$, one for each neighbor in $F$, and where all copies are colored the same as $v$. In special case that $F$ is a tree, this will allow us to view the subgraph induced by $F$ and its neighbors as a tree.

As $(K_6, M)$ is a vertex-transitive signed graph, equivalently, ${\rm DSG}(K_6,M)$ is vertex-transitive as a $2$-edge-colored graph, we have the following.

\begin{lemma}\label{lem:2-connected} The graph $G$ is $2$-connected, in particular, we have $\delta(G) \geq 2$.
\end{lemma}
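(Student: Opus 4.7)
The plan is to exploit the minimality of $(G,\sigma)$ together with the vertex-transitivity of ${\rm DSG}(K_6,M)$ as a $2$-edge-colored graph, which is the hypothesis explicitly flagged just before the lemma statement.

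First I would handle the minimum degree condition. Suppose towards a contradiction that $G$ contains a vertex $v$ with $d(v)\leq 1$. The signed subgraph $(G-v,\sigma)$ still satisfies $\mad(G-v)\leq \mad(G)<\frac{14}{5}$ and has strictly fewer vertices, so by minimality there is an edge-sign preserving homomorphism $\phi\colon (G-v,\sigma)\to {\rm DSG}(K_6,M)$. If $d(v)=0$, simply extending $\phi$ by sending $v$ to any vertex of ${\rm DSG}(K_6,M)$ yields a homomorphism of $(G,\sigma)$, contradicting the assumption that $(G,\sigma)$ is a counterexample. If $d(v)=1$ with unique neighbor $u$, it suffices to check that from $\phi(u)$ there is a neighbor in ${\rm DSG}(K_6,M)$ via an edge of sign $\sigma(uv)$; this is clear from the explicit description of ${\rm DSG}(K_6,M)$, in which every vertex has both positive and negative neighbors (indeed many of each). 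Assigning such a neighbor to $v$ again extends $\phi$, a contradiction. Hence $\delta(G)\geq 2$.

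Next I would address $2$-connectedness. Suppose $G$ has a cut vertex $v$, and write $G=G_1\cup G_2$ with $V(G_1)\cap V(G_2)=\{v\}$ and $|V(G_i)|<|V(G)|$ for each $i$. Since the maximum average degree is monotone under taking subgraphs, $\mad(G_i)<\frac{14}{5}$. By the minimality of the counterexample, there exist edge-sign preserving homomorphisms $\phi_i\colon (G_i,\sigma|_{G_i})\to {\rm DSG}(K_6,M)$ for $i=1,2$. Using the vertex-transitivity of ${\rm DSG}(K_6,M)$ as a $2$-edge-colored graph, choose an automorphism $\tau$ sending $\phi_2(v)$ to $\phi_1(v)$; replacing $\phi_2$ by $\tau\circ\phi_2$ we may assume $\phi_1(v)=\phi_2(v)$. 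Then the map which equals $\phi_1$ on $V(G_1)$ and $\phi_2$ on $V(G_2)$ is well defined on all of $V(G)$, preserves adjacencies and signs of edges (since every edge of $G$ lies entirely within some $G_i$), and is therefore an edge-sign preserving homomorphism $(G,\sigma)\to {\rm DSG}(K_6,M)$. This contradicts the assumption that $(G,\sigma)$ is a counterexample.

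I do not anticipate any genuine obstacle in this proof; the two arguments are entirely standard minimality reductions. The only slightly delicate point worth stating carefully is the use of vertex-transitivity of ${\rm DSG}(K_6,M)$ \emph{as a $2$-edge-colored graph}, rather than only as an unsigned graph, because it is exactly this property that allows the two partial homomorphisms to be glued at the cut vertex $v$ after applying the automorphism $\tau$. Everything else is immediate from the definitions and the structure of ${\rm DSG}(K_6,M)$.
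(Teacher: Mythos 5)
Your proof is correct and is exactly the argument the paper intends: the lemma is stated there without a written proof, justified only by the preceding remark that ${\rm DSG}(K_6,M)$ is vertex-transitive as a $2$-edge-colored graph, which is precisely what you use to realign the two partial homomorphisms at a cut vertex before gluing. Your explicit handling of the $\delta(G)\geq 2$ cases and the monotonicity of $\mad$ under subgraphs fills in the routine details the authors leave implicit.
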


Next we show that vertices of certain types are reducible.

\begin{lemma}\label{lem:weakvertices}
The graph $G$ does not contain the following vertices: $2_1$-vertex, $3_2$-vertex,  $4_4$-vertex, $5_5$-vertex.
\end{lemma}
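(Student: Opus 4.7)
The plan is to treat each configuration as reducible by a standard minimum counterexample argument. For a configuration centered at a vertex $v$, I delete $v$ together with its degree-$2$ neighbors to obtain a smaller signed graph $(G', \sigma')$ with $\mad(G') \leq \mad(G) < \frac{14}{5}$, so by minimality it admits an edge-sign preserving homomorphism $\phi$ to ${\rm DSG}(K_6, M)$. I then extend $\phi$ back to the deleted vertices, which is an $L$-coloring problem with initial list $L(x) = C$ at each deleted $x$ and precolored external neighbors. Viewing the deleted part together with its external neighbors as a tree rooted at $v$, I compute an admissible list $L^a(v)$ by propagating forbidden sets along the branches, using Lemma~\ref{oneEdgeRestriction} and Lemma~\ref{lem:2-Path}.

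For a $2_1$-vertex $v$ with degree-$2$ neighbor $u$ (whose external neighbor is $u'$) and external neighbor $w$, I delete $\{u, v\}$. The path $u'{-}u{-}v{-}w$ rooted at $v$ yields, by Lemma~\ref{lem:2-Path}, a paired $10$-set of admissible colors at $v$ from the $u$-branch, while Observation~\ref{obs:8-set} gives a neighbored $5$-set from the $w$-branch; since $10 + 5 > 12$ these sets intersect, and picking a color from the intersection for $v$ allows $u$ to be colored by admissibility. For a $3_2$-vertex $v$ with degree-$2$ neighbors $u_1, u_2$ and external neighbor $w$, I delete $\{v, u_1, u_2\}$. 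Each $u_i$-branch forbids a paired $2$-set at $v$ (equivalent formulation of Lemma~\ref{lem:2-Path}), and $w$ forbids a paired $7$-set at $v$ by Lemma~\ref{oneEdgeRestriction}~\ref{P2-0.5}, so a union bound yields $|L^a(v)| \geq 12 - 2 - 2 - 7 = 1$. The $4_4$- and $5_5$-cases follow the same template: deleting $v$ with all its degree-$2$ neighbors and applying the paired-$2$-set bound per branch, I obtain $|L^a(v)| \geq 12 - 4\cdot 2 = 4$ and $|L^a(v)| \geq 12 - 5\cdot 2 = 2$, respectively. Once $v$ is colored from $L^a(v)$, admissibility provides compatible colors for each deleted degree-$2$ vertex, yielding the desired extension and contradicting the assumption that $(G,\sigma)$ is a counterexample.

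The main (mild) subtlety is handling degenerate situations in which two external neighbors of the deleted part coincide, or an external neighbor happens to be another vertex of the configuration, so that the configuration is not strictly a tree. Following the convention stated at the beginning of Subsection~\ref{sec:forbidden-configurations}, I treat each occurrence of a precolored vertex as a separate leaf bearing the same color; the branch-wise cardinality estimates above are unaffected by such identifications, so the inclusion–exclusion goes through verbatim. Since $v$ itself is always deleted, the reduced graph is strictly smaller and the minimality hypothesis applies.
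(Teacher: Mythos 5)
Your proof is correct and follows essentially the same route as the paper: reduce by minimality, view the configuration with its precolored external neighbors as a rooted tree (duplicating coincident precolored vertices), and count forbidden colors at the center via Lemma~\ref{lem:2-Path} (a paired $2$-set per $2$-branch) and Lemma~\ref{oneEdgeRestriction}~\ref{P2-0.5} (a paired $7$-set per precolored neighbor). The only cosmetic difference is that the paper dispatches the $2_1$- and $4_4$-cases as subcases of the $3_2$- and $5_5$-cases rather than computing them separately.
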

\begin{proof}
	Observe that a $2_1$-vertex (respectively, $4_4$-vertex) is a subcase of a $3_2$-vertex (respectively, $5_5$-vertex). Thus it is enough to prove that $G$ has no $3_2$-vertex or $5_5$-vertex.  
	
		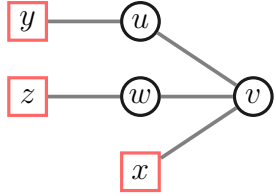
\begin{figure}[h]
		\centering
		\begin{tikzpicture}[>=latex,
		roundnode/.style={circle, draw=black!90, very thick, minimum size=5mm, inner sep=0pt},
		roundnode2/.style={circle, draw=black!90, very thick, minimum size=5mm, inner sep=0pt},
		squarednode/.style={rectangle, draw=red!60, very thick, minimum size=5mm, inner sep=0pt},
		] 
		\node [roundnode2](v) at (3,2){$v$};
		\node [roundnode](u) at (1.5,3){$u$};
		\node [roundnode](w) at (1.5,2){$w$};
		\node [squarednode](x) at (1.5,1){$x$};
		\node [squarednode](y) at (0,3){$y$};
		\node [squarednode](z) at (0,2){$z$};
		\draw [line width =1.4pt, gray, -] (y)--(u)--(v);
		\draw [line width =1.4pt, gray, -] (z)--(w)--(v);
		\draw [line width =1.4pt, gray] (x)--(v);
		\end{tikzpicture}
		\caption{Reducible configuration 3: $3_2$-vertex}
		\label{fig:FC3}
	\end{figure}

	Let $v$ be a $3_2$-vertex, let $u$ and $w$ be its $2$-neighbors and let $x$ be its third neighbor. Moreover, let $y$ and $z$ be the other neighbors of $u$ and $w$ (see Figure~\ref{fig:FC3}). We claim that any $L$-coloring of $x$, $y$ and $z$ could be extended to a coloring of the signed tree induced by $u$, $v$ and $w$ (rooted at $v$). That is because the color of $x$ reduces the list of available colors at $x$ to a neighbored $5$-set of which at most two elements become forbidden respectively by each of the $vu$ and $vw$-branches, leaving at least one admissible color at $v$. 
	
	The proof of $5_5$-vertex is similar. Let $v$ be a $5_5$-vertex. Consider the tree $T$ induced by $v$, its neighbors and their neighbors and suppose it is rooted at $v$. Assume all the leaves of $T$ are colored. Thus we have a full list of colors available at $v$ at the start, each of the five branches may forbid two from $L(v)$, leavings us with at least two colors in the admissible set of $v$.   
\end{proof}

Observe that this simple argument will not work on other type of vertices, more precisely for any other type of a vertex $v$, there will be a coloring of its neighborhood that leaves us with an empty admissible set at $v$. However, in the next series of lemmas, we put some restriction on the neighborhood of vertices of type $3_1$, $3_0$ and $4_3$.

\begin{lemma}\label{lem:RC4_3-3_1}
	In the graph $G$, no $3_1$-vertex is adjacent to a $4_3$-vertex.
\end{lemma}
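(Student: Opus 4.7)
The plan is to contradict the minimality of $(G,\sigma)$ by extending a smaller homomorphism. Suppose $u$ is a $3_1$-vertex adjacent to a $4_3$-vertex $v$; let $u_1$ be the unique $2$-neighbor of $u$, $w$ its other neighbor (of degree $\geq 3$), and $u_1'$ the second neighbor of $u_1$. Let $v_1,v_2,v_3$ be the three $2$-neighbors of $v$ with second neighbors $v_1',v_2',v_3'$. Deleting $\{u,u_1,v,v_1,v_2,v_3\}$ produces a proper subgraph which still satisfies $\mad < \frac{14}{5}$, hence by minimality it admits an edge-sign preserving homomorphism to ${\rm DSG}(K_6,M)$, thereby precoloring $w,u_1',v_1',v_2',v_3'$. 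The task is to extend this precoloring to the six removed vertices, producing the desired homomorphism of $(G,\sigma)$ and a contradiction.

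Following the paper's convention, view the six removed vertices together with their precolored external neighbors as a rooted tree with root $v$ (treating any repeated external vertex as multiple precolored leaves, all with the same color), and compute admissible lists bottom-up. For each $i\in\{1,2,3\}$, the precolored leaf $v_i'$ makes $L^a(v_i)$ a neighbored $5$-set by Lemma~\ref{oneEdgeRestriction}~\ref{P2-0.5}; hence by Lemma~\ref{oneEdgeRestriction}~\ref{P2-2.5} the forbidden set $F_{v_i}(v)$ is a paired $2$-set.

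For the remaining child $u$ of $v$, the same reasoning gives that $F_{u_1}(u)$ is a paired $2$-set, while the precolored $w$ contributes the paired $7$-set $F_w(u)$ by Lemma~\ref{oneEdgeRestriction}~\ref{P2-0.5}. Thus
\[
|L^a(u)| \;\geq\; |C|-|F_{u_1}(u)|-|F_w(u)| \;\geq\; 12-2-7 \;=\; 3,
\]
and a short inspection shows that $L^a(u)$, being a neighbored $5$-set minus a paired $2$-set, is paired and falls into exactly one of the shapes: a paired $3$-set, a one-sided paired $4$-set, or a neighbored $5$-set. In all three cases the appropriate item of Lemma~\ref{oneEdgeRestriction} (\ref{P2-1.5}, \ref{P2-2}, or \ref{P2-2.5}) gives the uniform bound $|F_u(v)|\leq 4$.

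Combining the four forbidden contributions at the root,
\[
|L^a(v)| \;\geq\; |C| - \sum_{i=1}^{3}|F_{v_i}(v)| - |F_u(v)| \;\geq\; 12-6-4 \;=\; 2 \;>\; 0,
\]
so $v$ can be assigned an admissible color and the coloring propagated down the tree, yielding the desired contradiction. The only genuine subtlety is the case split at $u$: depending on whether the pair $F_{u_1}(u)$ lies inside, partially inside, or disjoint from the neighbored $5$-set $C\setminus F_w(u)$, the admissible list $L^a(u)$ takes three different shapes. The point of the argument is that all three shapes are absorbed by Lemma~\ref{oneEdgeRestriction} with the same bound $|F_u(v)|\leq 4$, which is precisely what makes the budget $2+2+2+4<12$ close.
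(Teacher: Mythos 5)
Your argument for the ``generic'' configuration is sound and is essentially the paper's first case: rooting the tree at $v$, each $2$-neighbor branch of $v$ forbids a paired $2$-set, and the $u$-branch forbids at most $4$ colors (you re-derive by hand what the paper gets from Lemma~\ref{lem:3-1}), so $12-3\cdot 2-4=2>0$. Your case analysis of the shape of $L^a(u)$ and the uniform bound $|F_u(v)|\le 4$ via items \ref{P2-1.5}, \ref{P2-2}, \ref{P2-2.5} of Lemma~\ref{oneEdgeRestriction} is correct.

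However, there is a genuine gap: you do not treat the case where the $3_1$-vertex and the $4_3$-vertex share a common $2$-neighbor, i.e.\ $u_1=v_i$ for some $i$. Your device of ``treating any repeated external vertex as multiple precolored leaves'' only licenses duplication of \emph{precolored} vertices; here the repeated vertex $u_1$ is one of the deleted, uncolored vertices, adjacent to both $u$ and $v$, so the configuration contains the triangle $u\,v\,u_1$ and is not a tree. If you duplicate $u_1$ anyway, the bottom-up computation only guarantees that each copy can be colored compatibly with its own tree-parent; it does not guarantee a single color for $u_1$ compatible with both $u$ and $v$ simultaneously. Indeed, by Observation~\ref{obs:3path} there are choices of colors for $u$ and $v$ (same layer, with the wrong side/sign combination) for which no color for the middle $2$-vertex exists, so the budget count alone cannot close this case. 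The paper handles it separately: it colors the tree on $\{u_1',u,v,v_1,v_2,w_1,w_2\}$ so that $u$ and $v$ receive colors in \emph{different layers} --- possible because $L^a(u)$ is a neighbored $5$-set while $L^a(v)$ (using only the two remaining $2$-neighbor branches) contains a paired $8$-set --- and then extends to the shared $2$-vertex by Observation~\ref{obs:3path}. You need to add an argument of this kind for your proof to be complete.
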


\begin{proof}
	Suppose to the contrary that a $3_1$-vertex $u$ is adjacent to a $4_3$-vertex $v$. 
	Let $u_1, u_2$ be the other two neighbors of $u$ with $u_2$ being of degree 2, and let $v_1,v_2,v_3$ be the other three neighbors of $v$ all of which are all of degree 2.
	We first consider the case that $u_2$ is distinct from $v_1,v_2,v_3$. 
	Let $w_0$ be the other neighbor of $u_2$, finally let $w_1, w_2, w_3$ be the other neighbors of $v_1,v_2,v_3$ respectively. Observe that $w_i$ and $u_1$ are not necessarily distinct vertices of $G$, but as they are going to be precolored, for the sake of this proof we may assume that they are distinct. Let $T$ be the tree induced by $u, v, u_2, v_1,v_2, v_3$ and $w_0, w_1,w_2,w_3$ and consider it as rooted at $v$. See Figure~\ref{FC5} for illustration.
	
		\begin{figure}[h]
		\centering
		\begin{tikzpicture}[>=latex,
		roundnode/.style={circle, draw=black!90, very thick, minimum size=5mm, inner sep=0pt},
		roundnode2/.style={circle, draw=black!90, very thick, minimum size=5mm, inner sep=0pt},
		squarednode/.style={rectangle, draw=red!60, very thick, minimum size=5mm, inner sep=0pt}, scale=0.86
		] 
		\node [roundnode2] (u) at (2,1){$u$};
		\node [roundnode2] (v) at (3,1){$v$};
		\node [squarednode] (u1) at (1,0){$u_1$};
		\node [roundnode] (u2) at (1,2){$u_2$};
		\node [roundnode](v2) at (4,1){$v_2$};
		\node [roundnode](v3) at (4,0){$v_3$};
		\node [roundnode](v1) at (4,2){$v_1$};
		\node [squarednode](w0) at (-0.5,2){$w_0$};
		\node [squarednode](w1) at (5.5,2){$w_1$};
		\node [squarednode](w2) at (5.5,1){$w_2$};
		\node [squarednode](w3) at (5.5,0){$w_3$};
		\draw [line width =1.4pt, gray, -] (u)--(u2);
		\draw [line width =1.4pt, gray] (u2)--(w0);
		\draw [line width =1.4pt, gray] (u)--(u1);
		\draw [line width =1.4pt, gray, -] (u)--(v);
		\draw [line width =1.4pt, gray, -] (v)--(v1);
		\draw [line width =1.4pt, gray, -] (v)--(v2);
		\draw [line width =1.4pt, gray, -] (v)--(v3);
		\draw [line width =1.4pt, gray] (v1)--(w1);
		\draw [line width =1.4pt, gray] (v)--(v2)--(w2);
		\draw [line width =1.4pt, gray] (v)--(v3)--(w3);
		\end{tikzpicture}
		\caption{ $u$ and $v$ do not share a common $2$-neighbor}
		\label{FC5}
	\end{figure}
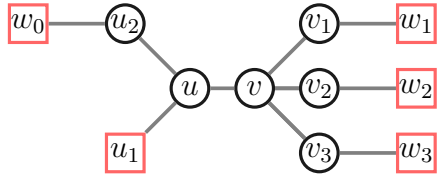

Let  $L$ be a list assignment which assigns a list of size $1$ to each of $w_i$'s and $u_1$, and a full list to the other vertices of $T$. Observe that, by Lemma~\ref{lem:3-1}, the $u$-branch of the tree forbids at most two pairs of colors from $L(v)$ and, by Lemma~\ref{lem:2-Path}, each other branch forbids at most one pair of colors from it, thus $L^a(v)$ contains at least one pair of admissible colors. This completes the proof of this case.

Now we consider the case that $u$ and $v$ have a common $2$-neighbor, say $w$. Let $u_1$ be the other neighbor of $u$ and let $v_1$, $v_2$ be the other two neighbors of $v$. Furthermore let each of $w_1$ and $w_2$ be the neighbor of $v_1$ and $v_2$ (respectively) distinct from $v$. (see Figure~\ref{FC5-2}).

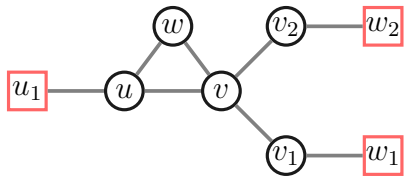
\begin{figure}[h]
	\centering
	\begin{tikzpicture}[>=latex,
	roundnode/.style={circle, draw=black!90, very thick, minimum size=5mm, inner sep=0pt},
	roundnode2/.style={circle, draw=black!90, very thick, minimum size=5mm, inner sep=0pt},
	squarednode/.style={rectangle, draw=red!60, very thick, minimum size=5mm, inner sep=0pt}, scale=0.86
	] 
	\node [roundnode2] (u) at (1.5,1){$u$};
	\node [roundnode2] (v) at (3,1){$v$};
	\node [squarednode] (u1) at (0,1){$u_1$};
	\node [roundnode](v1) at (4,0){$v_1$};
	\node [roundnode](v2) at (4,2){$v_2$};
	\node [squarednode](w1) at (5.5,0){$w_1$};
	\node [squarednode](w2) at (5.5,2){$w_2$};
	\node [roundnode](w) at (2.25,2){$w$}; 
	\draw [line width =1.4pt, gray] (u)--(w)--(v);
	\draw [line width =1.4pt, gray] (u)--(u1);
	\draw [line width =1.4pt, gray] (u)--(v);
	\draw [line width =1.4pt, gray, -] (v)--(v1);
	\draw [line width =1.4pt, gray] (v1)--(w1);
	\draw [line width =1.4pt, gray, -] (v)--(v2);
	\draw [line width =1.4pt, gray] (v2)--(w2);
	\end{tikzpicture}
	\caption{ $u$ and $v$ share a $2$-neighbor}
	\label{FC5-2}
\end{figure}

	As before, we may assume that $u_1$, $w_1$ and $w_2$ (precolored vertices) are distinct. Let $T$ be the  tree induced by $\{u_1, u,v,v_1,v_2, w_1,w_2\}$ and let $L$ be a list assignment which gives a single color to $u_1$, $w_1$ and $w_2$ and a full list to the other vertices. We will show that $T$ has an $L$-coloring such that colors of $u$ and $v$ are in different layers. This completes the proof as for any such choice of colors for $u$ and $v$ one may find an extension for $w$ by Observation~\ref{obs:3path}. Our claim itself is the result of the fact that considering $uu_1$-branch of $T$, $L^a(u)$ is a neighbored $5$-set and considering only $vv_1$ and $vv_2$-branches, $L^a(v)$ contains a paired $8$-set.  
\end{proof}

\begin{lemma}\label{lem:RC3_1-2-3_1}
There are no adjacent $3_1$-vertices who share a common $2$-neighbor in $G$.
\end{lemma}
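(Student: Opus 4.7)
The plan is to reduce this configuration by the standard minimality argument: remove the three vertices of the configuration, apply minimality to obtain a homomorphism on what remains, and then extend it by a list-coloring argument on the resulting triangle. Let $u$ and $v$ be the two adjacent $3_1$-vertices sharing the common $2$-neighbor $w$, and let $u_1$ (resp.\ $v_1$) be the third neighbor of $u$ (resp.\ $v$). Since $w$ is the unique $2$-neighbor of each of $u$ and $v$, both $u_1$ and $v_1$ are $3^+$-vertices and, being distinct from $w$, lie outside the set $\{u,v,w\}$. Observe that the three vertices $u, v, w$ induce a triangle whose only attachments to the rest of $G$ are the two edges $uu_1$ and $vv_1$.

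Consider $(G', \sigma) = (G - \{u,v,w\}, \sigma)$. Since $G'$ has fewer vertices than $G$ and $\mad(G') \leq \mad(G) < \tfrac{14}{5}$, minimality gives an edge-sign preserving homomorphism $\phi: (G', \sigma) \to {\rm DSG}(K_6, M)$. To extend $\phi$ to $u, v, w$, I would consider the signed triangle $uvw$ with the list assignment $L(w) = C$, $L(u) = C \setminus F_{u_1}(u)$, and $L(v) = C \setminus F_{v_1}(v)$, where $\phi$ has already colored $u_1$ and $v_1$. By Observation~\ref{obs:8-set} (which uses Lemma~\ref{oneEdgeRestriction}~\ref{P2-0.5}), both $L(u)$ and $L(v)$ are neighbored $5$-sets. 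This matches exactly the hypothesis of Lemma~\ref{lem:oddcycle}~\ref{odd2} with $k=1$, relabeling $v_1 := w$, $v_2 := u$, $v_3 := v$ in the notation of that lemma: one vertex of the triangle receives the full list $C$, while each of the other two receives a neighbored $5$-set. Thus the triangle admits an $L$-coloring, which, combined with $\phi$, yields an edge-sign preserving homomorphism of $(G, \sigma)$ to ${\rm DSG}(K_6, M)$, contradicting that $(G, \sigma)$ is a counterexample.

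There is essentially no obstacle here, since Lemma~\ref{lem:oddcycle}~\ref{odd2} was designed to handle precisely this triangle-closure situation. The minor bookkeeping issue is the possibility that $u_1 = v_1$ or that $u_1, v_1$ are adjacent to some vertex of $\{u,v,w\}$ outside the displayed edges; the former causes no trouble because only the colors $\phi(u_1)$ and $\phi(v_1)$ enter the list construction, and as in the convention stated before Lemma~\ref{lem:2-connected} such precolored vertices are treated independently for each appearance in the configuration.
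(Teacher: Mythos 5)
Your proposal is correct and follows essentially the same route as the paper: precolor the external neighbors $u_1, v_1$, note that the resulting lists on $u$ and $v$ are neighbored $5$-sets while $w$ keeps the full list $C$, and list-color the triangle $uwv$. The paper concludes directly via Observations~\ref{obs:K2DifferentLayers} and~\ref{obs:3path} (choosing colors for $u$ and $v$ in different layers and extending to $w$), whereas you invoke Lemma~\ref{lem:oddcycle}~\ref{odd2} with $k=1$, whose $k=1$ case is itself proved by exactly those same tools, so the two arguments coincide in substance.
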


\begin{proof}
	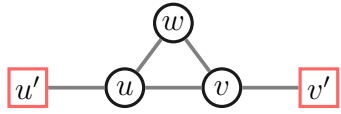
\begin{figure}[h]
		\centering
		\begin{tikzpicture}[
		roundnode/.style={circle, draw=black!90, very thick, minimum size=5mm, inner sep=0pt},
		roundnode2/.style={circle, draw=black!90, very thick, minimum size=5mm, inner sep=0pt},
		squarednode/.style={rectangle, draw=red!60, very thick, minimum size=5mm, inner sep=0pt}, scale=0.86
		] 
		\node [roundnode2] (u) at (1.5,1){$u$};
		\node [roundnode2] (v) at (3,1){$v$}; 
		\node [roundnode] (w) at (2.25,2){$w$}; 
		\node [squarednode] (u') at (0,1){$u'$};
		\node [squarednode](v') at (4.5,1){$v'$};
		 
		\draw [line width =1.4pt, gray] (u)--(u');
		\draw [line width =1.4pt, gray] (u)--(w);
		\draw [line width =1.4pt, gray] (u)--(v); 
		\draw [line width =1.4pt, gray] (v)- -(v');
		\draw [line width =1.4pt, gray] (v)--(w);
		\end{tikzpicture}
		\caption{ Two adjacent $3_1$-vertices share a common $2$-neighbor.}
		\label{FC8}
	\end{figure}

	Assume to the contrary that $u$ and $v$ are two adjacent $3_1$-vertices of $G$, and $w$ is the common $2$-neighbor of them. Let $u'$ be the third neighbor of $u$ and let $v'$ be the third neighbor of $v$, see Figure \ref{FC8}. Thus we have a list assignment on the subgraph induced by $u', u, w, v, v'$, where $u'$ and $v'$ are precolored and the other three have a full list. Our claim then follows from Observations~\ref{obs:K2DifferentLayers} and \ref{obs:3path} as in the proof of the previous lemma. 	
\end{proof}

\begin{lemma}\label{lem:RC3_0-2-3_1}
	A $3_0$-vertex together with two $3_1$-vertices do not induce a triangle in $G$.
\end{lemma}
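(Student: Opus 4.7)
The configuration to be ruled out is a triangle $uvw$ in which $u$ is the $3_0$-vertex and $v,w$ are the two $3_1$-vertices. Denote by $u'$ the third neighbor of $u$ (so $\deg(u') \geq 3$ by the $3_0$ assumption), by $v_2$ the unique $2$-neighbor of $v$ and by $v_2'$ its other neighbor, and analogously let $w_2, w_2'$ denote the $2$-neighbor of $w$ and its other neighbor; by Lemma~\ref{lem:weakvertices} (no $2_1$-vertex) neither $v_2'$ nor $w_2'$ has degree $2$. I would argue by contradiction: remove the set $X := \{u,v,w,v_2,w_2\}$ from $G$. Since $\mad(G-X) \leq \mad(G) < \tfrac{14}{5}$ and $|V(G-X)| < |V(G)|$, minimality of the counterexample yields an edge-sign preserving homomorphism of $G-X$ to $\mathrm{DSG}(K_6,M)$; this precolors $u'$, $v_2'$, and $w_2'$, and the remaining task is to extend the coloring to $X$.

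Next I would propagate the constraints from the precolored boundary into admissible lists on $u, v, w$, viewing the edge $uu'$ and the two pendant $2$-paths as rooted branches attached to the triangle. From the edge $uu'$, Observation~\ref{obs:8-set} gives that the admissible list at $u$ is a neighbored $5$-set. From the pendant $2$-path $vv_2v_2'$ rooted at $v$, Lemma~\ref{lem:2-Path} gives that the admissible list at $v$ is a paired $10$-set, and symmetrically the admissible list at $w$ is a paired $10$-set.

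The triangle $uvw$ equipped with these lists exactly matches the hypothesis of Lemma~\ref{lem:oddcycle}~\ref{odd1}, with $u$ playing the role of the unique even-indexed vertex (carrying the neighbored $5$-set) and $v,w$ playing the odd-indexed vertices (carrying paired $10$-sets). That lemma therefore produces an edge-sign preserving list-coloring of the triangle, and because the colors chosen for $v$ and $w$ lie in their admissible sets, the coloring extends along the two pendant $2$-paths to $v_2$ and $w_2$. Combined with the inherited coloring of $G-X$, this is an edge-sign preserving homomorphism of $(G,\sigma)$ to $\mathrm{DSG}(K_6,M)$, contradicting the choice of $G$. The only delicate point is handling possible coincidences among $u', v_2', w_2'$ (or between them and vertices of $X$), which is absorbed by the multiple-copies convention stated at the beginning of Subsection~\ref{sec:forbidden-configurations}; the core structural observation is the exact match between the shapes of the reduced lists produced by Lemma~\ref{lem:2-Path} and Observation~\ref{obs:8-set} and the hypothesis of Lemma~\ref{lem:oddcycle}~\ref{odd1}, so no further case analysis on the triangle itself is required.
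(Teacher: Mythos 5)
Your argument is essentially the paper's own proof: both reduce the configuration to an application of Lemma~\ref{lem:oddcycle}~\ref{odd1} with $k=1$, after using Observation~\ref{obs:8-set} (equivalently Lemma~\ref{oneEdgeRestriction}~\ref{P2-0.5}) to place a neighbored $5$-set on the $3_0$-vertex and Lemma~\ref{lem:2-Path} to place paired $10$-sets on the two $3_1$-vertices. The one case you leave unaddressed is $v_2=w_2$, i.e.\ the two $3_1$-vertices sharing their $2$-neighbor: your ``multiple copies'' remark only covers coincidences among the precolored boundary vertices, not coincidences inside $X$, and if $v_2=w_2$ the two pendant paths collapse (that vertex then has both neighbors in $X$), so the lists at $v$ and $w$ are no longer produced by Lemma~\ref{lem:2-Path}. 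That case cannot occur --- the paper dismisses it by $2$-connectivity, and it is also excluded by Lemma~\ref{lem:RC3_1-2-3_1} --- so your proof is complete once this one observation is added.
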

\begin{proof}
	Suppose that two adjacent $3_1$-vertices $u$ and $v$ share a $3_0$-neighbor $w$. Since $G$ is $2$-connected, $u$ and $v$ do not have a common $2$-neighbor, moreover, let $u'$ and $v'$ be their 2-neighbors respectively. As $w$ has no $2$-neighbor, its third neighbor $w'$ is distinct from $u'$ and $v'$. Furthermore, we label the other neighbors of $u'$ and $v'$ as $u_1$ and $v_1$, respectively, but noting that, as we will consider them to be precolored vertices, they do not need to be distinct from each other or from $w'$ which will also be precolored. Then the set of admissible lists on $u, v, w$, induced by the coloring of $u_1$, $v_1$ and $w'$, would satisfy the conditions of Lemma~\ref{lem:oddcycle}~\ref{odd1} with $k=1$, proving that this configuration is reducible.
\end{proof}

\begin{lemma}\label{lem:RC3_1-3_1-3_1}
No $3_1$-vertex of $G$ has two $3_1$-neighbors.
\end{lemma}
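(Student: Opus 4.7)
The plan is a minimum-counterexample reducibility argument in the spirit of Lemmas~\ref{lem:RC4_3-3_1} and~\ref{lem:RC3_1-2-3_1}. Suppose for contradiction that $v$ is a $3_1$-vertex of $G$ with $2$-neighbor $w$ whose two non-$w$ neighbors $u_1,u_2$ are both $3_1$-vertices. For each $i\in\{1,2\}$, write $x_i$ for the unique $2$-neighbor of $u_i$, $x_i'$ for the other neighbor of $x_i$, and $y_i$ for the third neighbor of $u_i$; let $w'$ be the other neighbor of $w$. By Lemma~\ref{lem:RC3_1-2-3_1} applied to the adjacent $3_1$-pair $\{v,u_i\}$ we have $x_i\neq w$, and by Lemma~\ref{lem:weakvertices} the vertex $y_i$ has degree at least $3$.

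In the generic case, the six vertices $F=\{v,u_1,u_2,w,x_1,x_2\}$ induce a tree in $G$. By minimality of $G$, the graph $G-F$ admits an edge-sign preserving homomorphism $\phi$ to ${\rm DSG}(K_6,M)$; using the duplication convention from the start of Subsection~\ref{sec:forbidden-configurations} for coinciding precolored neighbors, we view $F$ together with the five precolored leaves $w',x_1',x_2',y_1,y_2$ as a rooted tree $T$ with root $v$ and interior lists equal to $C$. I would then bound the three branches' contributions to the admissible set $L^a(v)$ independently. The branch $v$-$w$-$w'$ falls directly under Lemma~\ref{lem:2-Path}, so its contribution is a paired $10$-set. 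Each branch through $u_i$, together with the pendant $u_iy_i$, is exactly the tree of Figure~\ref{fig:3-1} under the correspondence $v\leftrightarrow v$, $u_i\leftrightarrow v_0$, $x_i\leftrightarrow v_1$, $x_i'\leftrightarrow v_1'$, $y_i\leftrightarrow v_2$, so Lemma~\ref{lem:3-1} gives a contribution of size at least $8$. Intersecting these three subsets inside $C$ (of size $12$), the trivial cardinality bound yields
\[
|L^a(v)| \;\geq\; 10 + 8 + 8 - 2\cdot 12 \;=\; 2 \;>\; 0,
\]
so an admissible color for $v$ exists and, by definition of admissibility, back-propagates down the three branches to a valid extension of $\phi$ to all of $(G,\sigma)$, contradicting the minimality of $G$.

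Two structural coincidences are not excluded by the previous lemmas and need separate treatment: (a) $x_1=x_2$, which places a $4$-cycle $vu_1x_1u_2$ inside $F$ with pending branches to $w,y_1,y_2$ (and no pending branch at $x_1$), and (b) $u_1u_2\in E(G)$, which forces $y_1=u_2$, $y_2=u_1$ and yields a triangle $vu_1u_2$ with pending branches to $w,x_1,x_2$. The same branch computations as in the main argument show that in case (a) we obtain a paired $10$-set at $v$, neighbored $5$-sets at $u_1,u_2$, and $L(x_1)=C$, so that after restricting $L(x_1)$ to any paired $10$-subset and ordering the cycle as $v,u_1,x_1,u_2$ we are in the scope of Lemma~\ref{lem:evencycle}~\ref{even1}; in case (b) each of $L(v),L(u_1),L(u_2)$ is a paired $10$-set, and since every paired $10$-set contains a neighbored $5$-set, restricting one of them reduces the problem to Lemma~\ref{lem:oddcycle}~\ref{odd1} with $k=1$. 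The main obstacle, if any, is the careful bookkeeping needed to match the degenerate configurations to these preliminary lemmas; the generic tree argument is immediate once Lemmas~\ref{lem:2-Path} and~\ref{lem:3-1} are combined.
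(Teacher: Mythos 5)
Your proof is correct and follows essentially the same route as the paper: the generic configuration is the paper's Case 1 (root the tree at the central $3_1$-vertex, apply Lemma~\ref{lem:2-Path} to the $2$-neighbor branch and Lemma~\ref{lem:3-1} to each $3_1$-branch, and count $12-2-4-4=2$ admissible colors), and your case (a) is the paper's Case 2 via Lemma~\ref{lem:evencycle}~\ref{even1}. The only divergence is cosmetic: where the paper dismisses adjacent $u_1,u_2$ by calling the resulting triangle a ``sub-configuration'' of Lemma~\ref{lem:RC3_0-2-3_1}, you reduce it directly to Lemma~\ref{lem:oddcycle}~\ref{odd1} with $k=1$, which is if anything the more precise way to phrase the same reduction.
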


\begin{proof}
Suppose to the contrary that $u, v, w$ are three $3_1$-vertices and $v$ is adjacent to both $u$ and $w$. Let $u_1$, $v_1$ and $w_1$ each be the $2$-neighbor of $u$, $v$ and $w$ respectively. By Lemma~\ref{lem:RC3_1-2-3_1}, we know $v_1$ is distinct from $u_1$ and $w_1$. Furthermore, $u$ and $w$ are not adjacent, as otherwise we will have a sub-configuration of Lemma~\ref{lem:RC3_0-2-3_1}. Depending on whether $u_1$ and $w_1$ are distinct or not, we consider two cases. 

\begin{figure}[h]
	\begin{minipage}[t]{0.5\textwidth}
		\centering
		\begin{tikzpicture}[>=latex,
		roundnode/.style={circle, draw=black!90, very thick, minimum size=5mm, inner sep=0pt},
		roundnode2/.style={circle, draw=black!90, very thick, minimum size=5mm, inner sep=0pt},
		squarednode/.style={rectangle, draw=red!60, very thick, minimum size=5mm, inner sep=0pt}, scale=0.86
		] 
		\node [roundnode2] (u) at (3.5,1){$u$};
		\node [roundnode2] (v) at (5,1){$v$};
		\node [roundnode2] (w) at (6.5,1){$w$};
		\node [roundnode] (u1) at (3.5,0){$u_1$};
		\node [roundnode] (v1) at (5,0){$v_1$};
		\node [roundnode] (w1) at (6.5,0){$w_1$};
		
		\node [squarednode] (u1') at (3.5,-1){$u'_1$};
		\node [squarednode](v1') at (5,-1){$v'_1$};		
		\node [squarednode](w1') at (6.5,-1){$w'_1$};
		
		\node [squarednode] (u2) at (2,1){$u_2$};
		\node [squarednode](w2) at (8,1){$w_2$};
		
		\draw [line width =1.4pt, gray, -] (u2)--(u)--(v); 
		\draw [line width =1.4pt,gray, -] (w2)--(w)--(v);
		\draw [line width =1.4pt,gray, -] (u1')--(u1)--(u);
		\draw [line width =1.4pt,gray, -] (v1')--(v1)--(v);
		\draw [line width =1.4pt,gray, -] (w1')--(w1)--(w);
		\end{tikzpicture}
		\caption{Case: $u_1\neq w_1$}
		\label{fig:FC8-1}
	\end{minipage}
	\begin{minipage}[t]{0.5\textwidth}
		\centering
		\begin{tikzpicture}[>=latex,
		roundnode/.style={circle, draw=black!90, very thick, minimum size=5mm, inner sep=0pt},
		roundnode2/.style={circle, draw=black!90, very thick, minimum size=5mm, inner sep=0pt},
		squarednode/.style={rectangle, draw=red!60, very thick, minimum size=5mm, inner sep=0pt}, scale=0.86
		] 
		\node [roundnode2] (u) at (3.5,1){$u$};
		\node [roundnode2] (v) at (5,1){$v$};
		\node [roundnode2](w) at (6.5,1){$w$};
		\node [roundnode2] (x) at (5,2){$x$};
		\node [roundnode] (v1) at (5,0){$v_1$};
		\node [squarednode](v1') at (5,-1){$v'_1$};		
		\node [squarednode] (u2) at (2,1){$u_2$};
		\node [squarednode](w2) at (8,1){$w_2$};
		\draw [line width =1.4pt, gray] (u2)--(u)--(v)--(w)--(w2);	
		\draw [line width =1.4pt, gray] (u)--(x);
		\draw [line width =1.4pt, gray, -] (v1')--(v1)--(v);
		\draw [line width =1.4pt, gray] (w)--(x);
		\end{tikzpicture}
		\caption{Case: $u_1=w_1$}
		\label{fig:FC8-2}
	\end{minipage}
\end{figure}

 \noindent
{\bfseries Case 1:} $u_1\neq w_1$.  We will use the labeling of vertices near $u, v$ and $w$ as given in Figure~\ref{fig:FC8-1}, noting that $u_2$, $u'_1$, $v'_1$, $w'_1$ and $w_2$ are distinct from $u, v, w$, $u_1$, $v_1$ and $w_1$, but they are not necessarily distinct from each other, however as they are precolored, this would not matter in our proof. 
	
	We consider the rooted tree at $v$ (of Figure~\ref{fig:FC8-1}) whose leaves are precolored and at start all colors are available on each of the internal vertices. Then, by Lemma~\ref{lem:3-1}, each of the $u$-branch and $w$-branch of the tree forbids at most four colors from $L(v)$, while, by Lemma~\ref{lem:2-Path}, the $v_1$-branch of the tree forbids exactly two colors. Thus there are always at least two admissible colors in $L(v)$. 
		
\medskip
 \noindent
{\bfseries Case 2:} $u_1=w_1$. We follow the labeling of Figure~\ref{fig:FC8-2} where this common $2$-neighbor is relabeled as $x$. 

We note again that vertices $u_2$, $w_2$ and $v'_1$ of this figure are distinct from other vertices of the figure but not necessarily distinct from each other. 
We first assign a list to each of the vertices where $u_2$, $w_2$ and $v'_1$ are precolored, and other five vertices each have a full list. Then we update the lists of $u$, $w$ and $v$ according to, respectively, $u_2$, $v'_1$ and $w_2$. In updated lists, $L(u)$ and $L(w)$ each is a neighbored $5$-set, $L(v)$ is a paired $10$-set and $L(x)$ is a full set. Thus we may apply Lemma~\ref{lem:evencycle}~\ref{even1} with $k=2$.	
\end{proof}

\begin{lemma}\label{lem:RC3_0-three-3_1-etc}
Let $u$ be a $3_0$-vertex of $G$ whose neighbors $x$, $y$ and $z$ are all $3_1$-vertices. If $x$ and $y$ have a common neighbor, say $w$, then $d(w)\geq 4$.
\end{lemma}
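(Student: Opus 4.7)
Suppose for contradiction that $d(w)\le 3$. Since $G$ is $2$-connected by Lemma~\ref{lem:2-connected}, we have $d(w)\in\{2,3\}$. I first reduce the case $d(w)=3$: Lemma~\ref{lem:weakvertices} rules out $w$ being a $3_2$-vertex, while if $w$ were a $3_1$-vertex, then it would already have two $3_1$-neighbors (namely $x$ and $y$), contradicting Lemma~\ref{lem:RC3_1-3_1-3_1}. Hence in the case $d(w)=3$, the vertex $w$ must be a $3_0$-vertex with a third neighbor $w'$ of degree at least $3$.

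In both cases the local structure contains the $4$-cycle $u$-$x$-$w$-$y$-$u$ together with pendant branches. In the case $d(w)=2$ the branches are: single precolored neighbors $x'$ of $x$ and $y'$ of $y$; a T-shape at $u$ through $z$ with precolored leaves $z'$ and $z_1'$ (where $z_1$ is the $2$-neighbor of $z$ and $z_1'$ is the other neighbor of $z_1$); and no branch at $w$. In the case $d(w)=3$, the branches at $x$ and $y$ are length-$2$ pendant paths through their $2$-neighbors $x_1,y_1$ to precolored leaves $x_1',y_1'$; the $u$-branch is the same T-shape; and $w$ has a single precolored neighbor $w'$. I would delete all interior (non-boundary) vertices of the configuration to obtain a strictly smaller graph which, by the minimality of $(G,\sigma)$, admits an edge-sign preserving homomorphism to ${\rm DSG}(K_6,M)$, furnishing a precoloring of the boundary vertices.

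Applying Lemma~\ref{oneEdgeRestriction} branch-by-branch I would compute the admissible lists on the cycle vertices: $L^a(x),L^a(y)$ are neighbored $5$-sets in the case $d(w)=2$ and paired $10$-sets in the case $d(w)=3$, while $L^a(w)$ equals $C$ in the case $d(w)=2$ and is a neighbored $5$-set in the case $d(w)=3$. For $L^a(u)$, I would first compute $L^a(z_1)$ (a neighbored $5$-set, so that $F_{z_1}(z)$ is a paired $2$-set on the singleton side of $L^a(z_1)$), and then intersect with the admissibility constraint coming from $z'$. A short case analysis in the precolor $c_{z'}$, up to switching symmetries, shows that $L^a(z)$ is always one of the following three shapes: a neighbored $5$-set, a one-sided paired $4$-set, or a non-one-sided paired $3$-set; in every subcase, a direct computation of $F_z(u)$ via Lemma~\ref{oneEdgeRestriction} yields that $L^a(u)$ contains a paired $8$-set with four elements on each side of $C$.

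To conclude, in the case $d(w)=2$ I would apply Lemma~\ref{lem:evencycle}~\ref{even3} to the $4$-cycle with $w,x,u,y$ playing the roles of $v_1,v_2,v_3,v_4$ respectively; in the case $d(w)=3$ I would restrict $L(u)$ to a neighbored $5$-set lying inside its paired $8$-set and then apply Lemma~\ref{lem:evencycle}~\ref{even1} with $x,u,y,w$ in the roles of $v_1,v_2,v_3,v_4$. The main technical obstacle is the case analysis for $L^a(z)$: verifying that for every admissible precolor $c_{z'}$ and every shape of $L^a(z_1)$ the resulting $L^a(u)$ contains a paired $8$-set with four elements on each side. Possible coincidences among interior vertices (for example $z_1=x_1$ in the case $d(w)=3$) would be handled separately, either by noting they yield sub-configurations forbidden by earlier lemmas (such as Lemmas~\ref{lem:RC3_1-3_1-3_1} and \ref{lem:RC3_0-2-3_1}) or by adapting the list-coloring argument to the modified local structure.
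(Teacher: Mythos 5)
Your reduction of the degree of $w$ (ruling out $3_2$- and $3_1$-vertices via Lemmas~\ref{lem:weakvertices} and \ref{lem:RC3_1-3_1-3_1}) and your treatment of the case $d(w)=2$ match the paper: there too one applies Lemma~\ref{lem:evencycle}~\ref{even3} to the $4$-cycle $uxwy$ with $L(w)=C$, neighbored $5$-sets on $x,y$, and a paired $8$-set on $u$ obtained from the $z$-branch via Lemma~\ref{lem:3-1}. The genuine gap is in the case $d(w)=3$: you assume the third neighbor $w'$ of $w$ is a precolored boundary vertex, so that the configuration is a $4$-cycle with pendant tree branches. But $w'$ may equal $z$. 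This is not a degenerate ``coincidence among interior vertices'' that reduces to an earlier forbidden configuration or to a minor adaptation: when $w\sim z$, the vertices $u$ and $w$ are both adjacent to all of $x,y,z$, the induced structure is a $K_{2,3}$ with pendant $2$-paths, and no single $4$-cycle lemma applies. The paper devotes its longest case analysis precisely to this situation, testing four candidate colorings of the pair $(u,w)$ (namely $(1^+,3^+)$, $(1^+,5^+)$, $(1^+,3^-)$, $(1^+,5^-)$) and showing that the failure of each of the first three pins a distinct one of $x,y,z$ to a specific forbidden pair, so the fourth must extend. Your proposal contains no argument for this case, and your fallback (``forbidden by earlier lemmas or adapt the list-coloring argument'') does not apply to it.

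A secondary, fixable issue concerns your subcase $d(w)=3$, $w\not\sim z$: you propose to restrict $L(u)$ to a neighbored $5$-set inside the paired $8$-set guaranteed on $u$ and then invoke Lemma~\ref{lem:evencycle}~\ref{even1}. However, as the paper notes just before Observation~\ref{obs:K2DifferentLayers}, a paired $8$-set with four elements on each side need not contain a neighbored $5$-set (it fails exactly when the eight elements compose two layers). So the property you claim to establish for $L^a(u)$ is not strong enough for the step you build on it; you would additionally have to check that $F_z(u)$ never deletes a full layer from $C$ (which does hold for the lists arising from the configuration of Lemma~\ref{lem:3-1}, but requires its own short verification). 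With that extra check, your route through Lemma~\ref{lem:evencycle}~\ref{even1} is a legitimate alternative to the paper's hand analysis with three candidate pairs for $u$; without it, and without the $w\sim z$ case, the proof is incomplete.
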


\begin{proof}
	We first observe that, by Lemma~\ref{lem:RC3_0-2-3_1}, $\{x, y, z\}$ is an independent set of vertices. Let $w$ be a common neighbor of $x$ and $y$. Observe that $w$ is not a $3_1$-vertex as otherwise it would contradict Lemma~\ref{lem:RC3_1-3_1-3_1}. Next we show that $w$ cannot be a $2$-vertex.	
	
	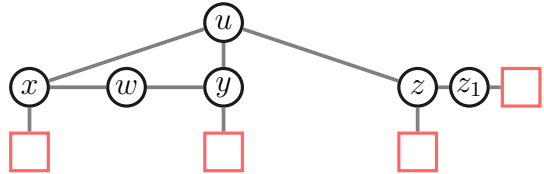
\begin{figure}[h]
		\centering
		\begin{tikzpicture}[>=latex,
		roundnode/.style={circle, draw=black!90, very thick, minimum size=5mm, inner sep=0pt},
		roundnode2/.style={circle, draw=black!90, very thick, minimum size=5mm, inner sep=0pt},
		squarednode/.style={rectangle, draw=red!60, very thick, minimum size=5mm, inner sep=0pt}, scale=0.86
		] 
		\node [roundnode2] (u) at (3,3){$u$};
		\node [roundnode2] (y) at (3,2){$y$};
		\node [roundnode2](w) at (1.5,2){$w$};
		\node [roundnode2] (x) at (0,2){$x$};
		\node [squarednode] (x1) at (0,1){};
		\node [squarednode] (y1) at (3,1){};
		\node [roundnode2] (z) at (6,2){$z$};
		\node [roundnode] (z1) at (6.8,2){$z_1$};
		\node [squarednode] (z3) at (6,1){};
		\node [squarednode] (z2) at (7.6,2){};
		
		\draw [line width =1.4pt, gray] (u)--(z); 
		\draw [line width =1.4pt, gray] (u)--(x)--(w);
		\draw [line width =1.4pt, gray] (x)--(x1);
		\draw [line width =1.4pt, gray] (y)--(y1);
		\draw [line width =1.4pt, gray] (z3)--(z)--(z1)--(z2);
		\draw [line width =1.4pt, gray] (u)--(y)--(w);	
		
		\end{tikzpicture}
		\caption{Neighborhood of a $3_0$-vertex}
		\label{fig:3_03_13_1-2}
	\end{figure}
	Suppose to the contrary that $w$ is a $2$-vertex (see Figure~\ref{fig:3_03_13_1-2}). Having colored the rest of $(G, \sigma)$ except for the $2$-neighbor of $z$, we are left with a list assignment on $u$, $x$, $y$, $z$, $w$ where each of $L(x)$ and $L(y)$ is a neighbored $5$-set, $L(w)$ is a full list, and by applying Lemma~\ref{lem:3-1} to the $uz$-branch of the figure, we modify the list of $u$ to a paired $8$-set. We may then apply Lemma~\ref{lem:evencycle}~\ref{even3} on the $4$-cycle $uxwy$ and we are done.

   Finally we show that $w$ cannot be a $3_0$-vertex either. Depending on whether $w$ is adjacent to $z$ or not, we have two cases to consider: 

\medskip
\noindent
{\bfseries case 1:} $w\sim z$. We follow the labeling of Figure~\ref{fig:3_0neighborhood2}. 

	\begin{figure}[h]
		\centering
		\begin{tikzpicture}[>=latex,
		roundnode/.style={circle, draw=black!90, very thick, minimum size=5mm, inner sep=0pt},
		roundnode2/.style={circle, draw=black!90, very thick, minimum size=5mm, inner sep=0pt},
		squarednode/.style={rectangle, draw=red!60, very thick, minimum size=5mm, inner sep=0pt}, scale=0.88
		] 
		\node [roundnode2] (u) at (3,3){$u$};
		\node [roundnode2] (y) at (3,2){$y$};
		\node [roundnode2](w) at (3,1){$w$};
		\node [roundnode2] (x) at (0,2){$x$};
		\node [roundnode] (x1) at (-0.8,2){$x_1$};
		\node [squarednode] (x2) at (-1.6,2){};
        \node [roundnode] (y1) at (3.8,2){$y_1$};
        \node [squarednode] (y2) at (4.6,2){};
		\node [roundnode2] (z) at (6,2){$z$};
		\node [roundnode] (z1) at (6.8,2){$z_1$};
		\node [squarednode] (z2) at (7.6,2){};
		
		\draw [line width =1.4pt, gray] (u)--(x)--(w);
		\draw [line width =1.4pt, gray] (x)--(x1)--(x2);
		\draw [line width =1.4pt, gray] (y)--(y1)--(y2);
		\draw [line width =1.4pt, gray] (z)--(z1)--(z2);
		\draw [line width =1.4pt, gray] (u)--(y)--(w);	
		\draw [line width =1.4pt, gray] (u)--(z)--(w); 
		\end{tikzpicture}
		\caption{$w$ is adjacent to $z$}
		\label{fig:3_0neighborhood2}
	\end{figure}

Assuming that rest of the graph is precolored, to extend the coloring to this part of the graph we have a full list of colors available on $u$ and $w$, and on each of $x$, $y$ and $z$ we have a paired $10$-set available, or, equivalently, only a paired $2$-set is missing. In what follows, we will try three possible partial coloring $\phi$ of $u$ and $w$, for each choice we either can extend $\phi$ to the full configuration, or will give a condition on the lists for $x$, $y$ and $z$. Then we will find fourth assignment to $u$ and $w$ that is extendable.

Our first coloring to consider satisfies that $\phi(u)=1^+$ and $\phi(w)=3^+$. This coloring can be extended to $x$, $y$ and $z$ unless for one of them, say $x$, one of the following holds: (1) both $ux$ and $xw$ are negative edges and the missing pair on $x$ is $\{5^-, 6^-\}$ or (2) both $ux$ and $xw$ are positive edges and the missing pair on $x$ is $\{5^+, 6^+\}$. 

As second choice, we try the coloring $\phi(u)=1^+$ and $\phi(w)=5^+$. Similarly, if this choice of colors is not extendable, for a vertex, say $y$, either (3) both $uy$ and $yw$ are negative edges and the missing two colors are $\{3^-, 4^-\}$ or (4) both $uy$ and $yw$ are positive edges and the missing pair on $y$ is $\{3^+, 4^+\}$, which, in particular, justifies the choice $y\neq x$.

As a third try, on examining the coloring $\phi(u)=1^+$ and $\phi(w)=3^-$, we conclude that for one of the three vertices, say $z$, either (5) $uz$ is positive and $zw$ is negative with $\{ 5^+, 6^+\}$ as the missing pair on $z$ or (6) $uz$ is negative and $zw$ is positive with $\{ 5^-, 6^-\}$ as the missing pair on $z$. Theses conditions also justify that $z$ is distinct from both $x$ and $y$.  

We now observe that the choice of $\phi(u)=1^+$ and $\phi(w)=5^-$ is extendable on all three of $x$, $y$, $z$.
		
\medskip
\noindent
{\bfseries case 2:} $w\nsim z$. We will use the labeling of Figure~\ref{fig:3_0neighborhood3}.

	\begin{figure}[h]
	\centering
	\begin{tikzpicture}[>=latex,
	roundnode/.style={circle, draw=black!90, very thick, minimum size=5mm, inner sep=0pt},
	roundnode2/.style={circle, draw=black!90, very thick, minimum size=5mm, inner sep=0pt},
	squarednode/.style={rectangle, draw=red!60, very thick, minimum size=5mm, inner sep=0pt}, scale=0.88
	] 
	\node [roundnode2] (u) at (3,3){$u$};
	\node [roundnode2] (y) at (3,2){$y$};
	\node [roundnode2](w) at (1.5,1){$w$};
	\node [roundnode2] (x) at (0,2){$x$};
	\node [roundnode] (x1) at (-0.8,2){$x_1$};
	\node [squarednode] (x2) at (-1.6,2){};
	\node [roundnode] (y1) at (3.8,2){$y_1$};
	\node [squarednode] (y2) at (4.6,2){};
	\node [roundnode2] (z) at (6,2){$z$};
	\node [roundnode] (z1) at (6.8,2){$z_1$};
	\node [squarednode] (z2) at (7.6,2){};
	\node [squarednode] (w1) at (1.5,0){};
	\node [squarednode] (z3) at (6,1){};
	
	\draw [line width =1.4pt, gray] (u)--(x)--(w);
	\draw [line width =1.4pt, gray] (x)--(x1)--(x2);
	\draw [line width =1.4pt, gray] (y)--(y1)--(y2);
	\draw [line width =1.4pt, gray] (z)--(z1)--(z2);
	\draw [line width =1.4pt, gray] (u)--(y)--(w)--(w1);	
	\draw [line width =1.4pt, gray] (u)--(z)--(z3); 
	\end{tikzpicture}
	\caption{$w$ is adjacent to $z$}
	\label{fig:3_0neighborhood3}
\end{figure}

Upon forming list of available colors on $u$, $x$, $w$ and $y$ using $zz_1$-branch for $u$, $x_1$-branch for $x$, and $y_1$-branch for $y$, the list of $u$, by Lemma~\ref{lem:3-1}, is a paired $8$-set. The list of $w$ is a neighbored $5$-set and the list of each of $x$ and $y$ is a paired $10$-set, equivalently, only a paired 2-set of colors is missing at $x$ or $y$. In particular, there is one color from each layer available at $w$. For one such color, say $c$, there must be three pairs of colors available for $u$ each not in the same layer as $c$. Let $c_1, c_2$ and $c_3$ each be a color from one of these pairs. We may now proceed as in the previous lemma, assigning $c$ to $w$ and $c_i$ to $u$ would be not extendable only if $x$ or $y$ is of a certain type, but there are only two of these vertices and three distinct possibilities.
\end{proof}

\subsubsection{Paths in 3-subgraph}\label{sec:badchain}
We have so far seen that $(G, \sigma)$, the minimum counterexample to our claim, have no $3_2$-vertex and no $3_1$-vertex seeing two other $3_1$-vertices. To complete our proof, we need further information on the subgraph induced by $3$-vertices. When applying discharging technique in the next section, among $3_0$-vertices the poorest one would be: $1.$ a $3_0$-vertex all whose neighbors are $3_1$-vertices, $2.$ a $3_0$-vertex with two $3_1$-neighbors one of which has another $3_1$-neighbor. A path of $(G, \sigma)$ is said to be \emph{poor} if first of all its vertices are alternatively of type $3_0$ and $3_1$, and secondly, the first and the last vertices of the path are among the poorest type of $3_0$-vertices. 

Our goal is to show that $(G, \sigma)$ does not contain a poor path, to this end, we will assume that $P$ is a minimum poor path in $(G, \sigma)$ whose vertices are labeled $v_1v_2\cdots v_{2k+1}$. If the end vertex $v_1$ is of type 1, then we label its other $3_1$-neighbors $v_0$ and $v'_0$ and if it is of type 2, then its other $3_1$-neighbor is labeled $v_0$ and the other $3_1$-neighbor of $v_0$ is labeled $v_{-1}$. Vertices $v_{2k+2}$, $v'_{2k+2}$ and $v_{2k+3}$ are defined similarly. Observe that, by Lemma~\ref{lem:RC3_1-3_1-3_1}, vertices $v_{-1}$ and $v_{2k+3}$, when exist, are two distinct vertices. Thus for $k\geq 1$, depending on the types of two ends of the poor path, we have three possible types of poor paths. For $k=0$, $v_1$ is viewed as the end vertex from each direction, but as it is a $3$-vertex, it cannot be of type 1 from each end, thus we can only have two types of poor paths. In Figure~\ref{fig:PoorPath}, both of these two possibilities are depicted, where only one possibility for $k\geq 1$ is also presented.

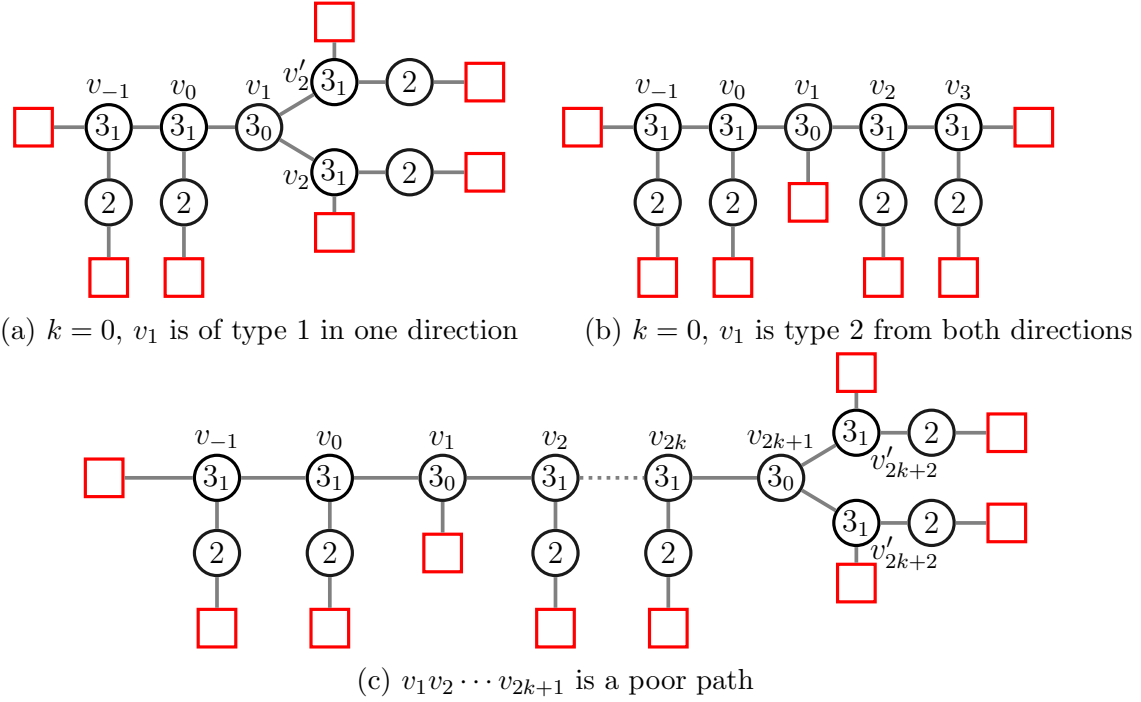
\begin{figure}[htbp]
\begin{minipage}[t]{0.5\textwidth}
\centering
\begin{tikzpicture}[>=latex,
roundnode/.style={circle, draw=black!90, very thick, minimum size=6mm, inner sep=0pt},
blacknode/.style={circle, draw=black, very thick, minimum size=6mm, inner sep=0pt},
bluenode/.style={circle, draw=black!90, very thick, minimum size=6mm, inner sep=0pt},
squarednode/.style={rectangle, draw=red, very thick, minimum size=5mm, inner sep=0pt}, scale=1
] 
\node [blacknode] (x) at (2,1){$3_1$};
\node at (2,1.5){$v_{-1}$};
\node [bluenode] (x1) at (2,0){$2$};
\node [squarednode] (x0) at (1,1){};
\node [squarednode] (x2) at (2,-1){};
\node [blacknode] (y) at (3,1){$3_1$};
\node at (3,1.5){$v_0$};
\node [bluenode] (y1) at (3,0){$2$};
\node [squarednode] (y2) at (3,-1){};
\node [roundnode] (v) at (4,1){$3_0$};
\node at (4,1.5){$v_1$}; 

\node [blacknode](x'_{2k+2}) at (5,1.6){$3_1$};
\node   at (4.5,1.7) {$v'_{2}$};
\node [blacknode](x_{2k+2}) at (5,0.4){$3_1$};
\node   at (4.5,0.3){$v_{2}$};
\node [squarednode](p) at (5,2.4){};
\node [bluenode](p1) at (6,1.6){$2$};
\node [squarednode](p2) at (7,1.6){};

\node [bluenode](q1) at (6,0.4){$2$};
\node [squarednode](q) at (5,-0.4){};
\node [squarednode](q2) at (7,0.4){};

\draw [line width =1.4pt,-, gray] (x)--(y)--(v);
\draw [line width =1.4pt, gray] (v)--(x'_{2k+2});
\draw [line width =1.4pt, gray] (v)--(x_{2k+2});
\draw [line width =1.4pt, gray, -] (x0)--(x)--(x1)--(x2);
\draw [line width =1.4pt, gray, -] (y)--(y1)--(y2); 
\draw [line width =1.4pt, gray, -] (p)--(x'_{2k+2})--(p1)--(p2);	
\draw [line width =1.4pt, gray, -] (q)--(x_{2k+2})--(q1)--(q2); 
\end{tikzpicture}
\subcaption{$k=0$, $v_1$ is of type 1 in one direction}
\end{minipage} 
\begin{minipage}[t]{0.5\textwidth}
\begin{tikzpicture}[>=latex,
roundnode/.style={circle, draw=black!90, very thick, minimum size=6mm, inner sep=0pt},
blacknode/.style={circle, draw=black, very thick, minimum size=6mm, inner sep=0pt},
bluenode/.style={circle, draw=black!90, very thick, minimum size=6mm, inner sep=0pt},
squarednode/.style={rectangle, draw=red, very thick, minimum size=5mm, inner sep=0pt}, scale=1
] 
\node [blacknode] (x) at (2,1){$3_1$};
\node at (2,1.5){$v_{-1}$};
\node [bluenode] (x1) at (2,0){$2$};
\node [squarednode] (x0) at (1,1){};
\node [squarednode] (x2) at (2,-1){};
\node [blacknode] (y) at (3,1){$3_1$};
\node at (3,1.5){$v_0$};
\node [bluenode] (y1) at (3,0){$2$};
\node [squarednode] (y2) at (3,-1){};
\node [roundnode] (v) at (4,1){$3_0$};
\node at (4,1.5){$v_1$};
\node [squarednode] (v1) at (4,0){};
\node [blacknode](w) at (5,1){$3_1$};
\node at (5,1.5){$v_2$};
\node [blacknode](z) at (6,1){$3_1$}; 
\node at (6,1.5){$v_3$};
\node [bluenode](w1) at (5,0){$2$};
\node [squarednode](w2) at (5,-1){};	
\node [bluenode](z1) at (6,0){$2$};
\node [squarednode](z2) at (6,-1){};
\node [squarednode](z3) at (7,1){};
\draw [line width =1.4pt, gray, -] (x)--(y)--(v)--(w);
\draw [line width =1.4pt, gray] (w)--(z);
\draw [line width =1.4pt, gray, -] (x0)--(x)--(x1)--(x2);
\draw [line width =1.4pt, gray, -] (y)--(y1)--(y2);
\draw [line width =1.4pt, gray, -] (w)--(w1)--(w2);
\draw [line width =1.4pt, gray, -] (z3)--(z)--(z1)--(z2);	
\draw [line width =1.4pt, gray, -] (v)--(v1);	        
\end{tikzpicture}
\subcaption{$k=0$, $v_1$ is type 2 from both directions}
\end{minipage} 

\begin{minipage}[t]{\textwidth}
\centering
\begin{tikzpicture}[>=latex,
roundnode/.style={circle, draw=black!90, very thick, minimum size=6mm, inner sep=0pt},
blacknode/.style={circle, draw=black, very thick, minimum size=6mm, inner sep=0pt},
bluenode/.style={circle, draw=black!90, very thick, minimum size=6mm, inner sep=0pt},
squarednode/.style={rectangle, draw=red, very thick, minimum size=5mm, inner sep=0pt}, scale=1
] 
\node [blacknode] (x) at (2,1){$3_1$};
\node at (2,1.5){$v_{-1}$};
\node [bluenode] (x1) at (2,0){$2$};
\node [squarednode] (x0) at (0.5,1){};
\node [squarednode] (x2) at (2,-1){};
\node [blacknode] (y) at (3.5,1){$3_1$};
\node at (3.5,1.5){$v_0$};
\node [bluenode] (y1) at (3.5,0){$2$};
\node [squarednode] (y2) at (3.5,-1){};
\node [roundnode] (v) at (5,1){$3_0$};
\node  at (5,1.5){$v_1$};
\node [squarednode] (v1) at (5,0){};
\node [roundnode](w) at (6.5,1){$3_1$};
\node  at (6.5,1.5){$v_{2}$};
\node [roundnode](z) at (8,1){$3_1$};
\node  at (8,1.5){$v_{2k}$};
\node [roundnode](x_{2k+1}) at (9.5,1){$3_0$};
\node  at (9.5,1.5){$v_{2k+1}$};
\node [blacknode](x'_{2k+2}) at (10.5,1.6){$3_1$};
\node at (11.14,1.2){$v'_{2k+2}$};
\node [blacknode](x_{2k+2}) at (10.5,0.4){$3_1$};
\node at (11.14,0){$v'_{2k+2}$};

\node [bluenode](w1) at (6.5,0){$2$};
\node [squarednode](w2) at (6.5,-1){};

\node [bluenode](z1) at (8,0){$2$};
\node [squarednode](z2) at (8,-1){};

\node [squarednode](p) at (10.5,2.4){};
\node [bluenode](p1) at (11.5,1.6){$2$};
\node [squarednode](p2) at (12.5,1.6){};

\node [bluenode](q1) at (11.5,0.4){$2$};
\node [squarednode](q) at (10.5,-0.4){};
\node [squarednode](q2) at (12.5,0.4){};

\draw [line width =1.4pt, gray, -] (x)--(y)--(v)--(w);
\draw [line width =1.4pt, gray, dotted] (w)--(z);
\draw [line width =1.4pt, gray] (x_{2k+1})--(x'_{2k+2});
\draw [line width =1.4pt, gray] (x_{2k+1})--(x_{2k+2});
\draw [line width =1.4pt, gray, -] (x0)--(x)--(x1)--(x2);
\draw [line width =1.4pt, gray, -] (y)--(y1)--(y2);
\draw [line width =1.4pt, gray, -] (w)--(w1)--(w2);
\draw [line width =1.4pt, gray, -] (z)--(z1)--(z2);	
\draw [line width =1.4pt, gray, -] (v)--(v1);
\draw [line width =1.4pt, gray, -] (p)--(x'_{2k+2})--(p1)--(p2);	
\draw [line width =1.4pt, gray, -] (q)--(x_{2k+2})--(q1)--(q2);	
\draw  [line width=1.4pt, gray, -] (z)--(x_{2k+1});  		        
\end{tikzpicture}
\subcaption{$v_1v_2\cdots v_{2k+1}$ is a poor path}
\end{minipage} 
\caption{ Poor paths}
\label{fig:PoorPath}
\end{figure}

Let $I_{3_0} = \{1,3,\ldots, 2k-1,2k+1\}$ and $I_{3_1} = \{0,2,\ldots, 2k, 2k+2\}$. As every $3_1$-vertex has a $2$-neighbor, for $i \in I_{3_1}\setminus\{0, 2k+2\}$, the vertex $v_i$ is not adjacent to any vertex $v_j$ for $j \in I_{3_0} \cup I_{3_1} \setminus\{i-1,i+1\}$. Moreover, by Lemma \ref{lem:RC3_0-three-3_1-etc}, vertices $v_0$ and $v_0'$, when the latter exists, do not share a $2$-neighbor. Furthermore, vertices $v_0$ and $v_0'$ are not distinguishable in a poor path so we can switch their roles if necessary. We can similarly treat $v_{2k+2}$ and $v'_{2k+2}$.

\begin{lemma}\label{cla1}
In the minimum counterexample $(G, \sigma)$ and with $P$ as a minimum poor path, the vertex $v_0$ is not adjacent to $v_i$ for $i \in  \{2,3,\ldots, 2k, 2k+1\}$, and the vertex $v_{2k+2}$ is not adjacent to $v_j$ for $j \in \{1,2,\ldots, 2k\}$. Moreover, $v_0 \neq v_{2k+2}$.
\end{lemma}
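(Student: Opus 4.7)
The plan is to prove the three assertions by contradiction against the minimality of $P$, handling (a) by case analysis on $i$, reducing (c) to (a) in the small case, and obtaining (b) by a symmetric argument applied to the reversed path. The central combinatorial idea is that any unwanted adjacency either forces some vertex to exceed its known degree or yields a strictly shorter poor path.

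For (a), fix $i \in \{2,\ldots,2k+1\}$ and assume $v_0 \sim v_i$. If $v_i$ is a $3_1$-vertex (so $i$ is even with $2 \le i \le 2k$), its three neighbors are already $v_{i-1}$, $v_{i+1}$, and its unique $2$-neighbor; none of these can coincide with $v_0$ (the $v_{i\pm 1}$ are $3_0$-vertices while $v_0$ is $3_1$, and the $2$-neighbor has degree $2$), so the edge $v_0 v_i$ would force $d(v_i)\ge 4$, a contradiction. If $v_i$ is an interior $3_0$-vertex (so $i$ is odd with $3 \le i \le 2k-1$), analogous degree bookkeeping at $v_{i-1}$ and $v_{i+1}$ rules out $v_0 = v_{i\pm 1}$ (the putative extra edge to $v_1$ would give either of these $3_1$-vertices a fourth neighbor, using also that $v_0 \neq v_2$ by the labeling convention), so $v_0$ must be the third neighbor of $v_i$; but then all three neighbors of $v_i$ are $3_1$-vertices, making $v_i$ itself poorest of "type 1", and the sub-path $v_i v_{i+1}\cdots v_{2k+1}$ is a poor path on $2k-i+2 \le 2k-1$ vertices, contradicting the minimality of $P$.

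The endpoint case $i = 2k+1$ splits on $k$. For $k\ge 2$ the three-vertex path $v_1 v_0 v_{2k+1}$ is itself a poor path (its endpoints are poorest by hypothesis and $v_0$ is a $3_1$-interior), strictly shorter than $P$. When $k = 1$, this trick degenerates, so I invoke Lemma~\ref{lem:RC3_0-three-3_1-etc} at $v_3$: its neighbors are $v_2$, $v_0$, and a third vertex $w$. If $w$ is $3_1$, then $v_3$ is of type~1 and the pair $v_0, v_2$ of $3_1$-neighbors of $v_3$ share the common neighbor $v_1$; Lemma~\ref{lem:RC3_0-three-3_1-etc} then forces $d(v_1) \ge 4$, a contradiction. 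If $w$ is not $3_1$, then $v_3$ is not of type~1, and a direct inspection shows that the other neighbors of each of $v_0$ and $v_2$ are the path $3_0$-vertices $v_1, v_3$ together with a single $2$-neighbor, so neither of $v_0, v_2$ has any additional $3_1$-neighbor; hence $v_3$ cannot be type~2 either, contradicting the hypothesis that $v_3$ is a poorest $3_0$-vertex.

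Assertion (c) proceeds by the same template. If $v_0 = v_{2k+2}$, then this common vertex is adjacent to both endpoints $v_1$ and $v_{2k+1}$; for $k\ge 2$ the path $v_1 v_0 v_{2k+1}$ is a shorter poor path, for $k = 1$ the identity $v_0 = v_4$ implies $v_0 \sim v_3$ which is precisely the (now-proved) case $i = 3$ of (a), and for $k = 0$ the disjointness $v_0 \neq v_2$ is built into the labeling convention. Finally, (b) follows by replaying the argument for (a) on the reversed poor path, with $v_{2k+2}$ playing the role of $v_0$. The main obstacle is the $k=1$ boundary, where pure length comparisons fail because substituting $v_0$ or $v_{2k+2}$ for an interior $3_1$-vertex of $P$ yields an equally short poor path; there one must pivot to the type~1/type~2 dichotomy at the endpoint, exhaust the possibilities for its third neighbor, and apply Lemma~\ref{lem:RC3_0-three-3_1-etc}, and that bookkeeping is the most delicate part of the argument.
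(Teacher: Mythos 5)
Your proposal is correct and follows essentially the same route as the paper: degree counting disposes of the $3_1$-vertices $v_i$, the interior odd $i$ are handled by exhibiting the strictly shorter poor path $v_i v_{i+1}\cdots v_{2k+1}$ after noting $v_i$ becomes a type-1 poorest vertex, the $k=1$ boundary is settled by Lemma~\ref{lem:RC3_0-three-3_1-etc} applied at $v_3$ with $v_1$ as the common neighbor, and the remaining assertions follow by symmetry and the short path $v_1v_0v_{2k+1}$. The only cosmetic difference is that for $i=2k+1$, $k\ge 2$, you use the three-vertex path $v_1v_0v_{2k+1}$ where the paper uses the degenerate path $v_jv_{j+1}\cdots v_{2k+1}$; both yield the same contradiction with minimality.
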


\begin{proof}
We give a proof for $v_0$, the argument for $v_{2k+2}$ follows by symmetries. It is already mentioned in the paragraph proceeding the lemma that $v_0$ is not adjacent to $v_i$ for even values of $i$. Thus, we only need to consider the odd values of $i$.

In the case of $k=0$, because $v_1$ is of degree $3$, at least one side is of type 2, thus one of $v_0$ or $v_2$ has its degree already full and both of the claims follow.

Next we consider the case $k=1$. The claim is that $v_0$ is not adjacent to $v_3$. By contradiction suppose it is. Since $v_0$ is a $3_1$-vertex, we already have $v_1$ and $v_3$ as the neighbors of $v_0$ which are not $2$-vertices. This, in particular, implies that $v_3$ is of type 1 and that $v_0=v_4$, and $v'_4$ is the other $3_1$-neighbor of $v_3$. We may now apply Lemma~\ref{lem:RC3_0-three-3_1-etc} with $u=v_3$, this completes the proof for $k=1$. 

For $k\geq 2$, and for the first part of the claim, observe that $v_1$ must be of type 1, as otherwise $v_1$ and $v_{-1}$ are the only $3$-neighbors of $v_0$.  Assume to the contrary that $v_0$ is adjacent to $v_j$ for an odd value of $j$. We now get a contradiction with the minimality of $P$ by considering the shorter poor path: $v_jv_{j+1}\cdots v_{2k+1}$.
It remains to show that $v_0\neq v_{2k+2}$. Again, suppose to the contrary that $v_0= v_{2k+2}$. Thus $v_0$, which is a $3_1$-vertex, is adjacent to both $v_1$ and $v_{2k+1}$, and, therefore, it has no other $3$-neighbor. Hence both $v_1$ and $v_{2k+1}$ are of type 1. But again we get a contradiction to the minimality of $P$ by taking the shorter poor path: $v_1v_0v_{2k+1}$.
\end{proof}

\begin{lemma}\label{cla2}
In the minimum counterexample $(G, \sigma)$ and with $P=v_1v_2\cdots v_{2k+1}$ as a minimum poor path, the following statements hold:
	\begin{enumerate}[leftmargin =3em, label=(\alph*)]
		\item  For any $i \in I_{3_1} \cup \{-1\} $ and $j \in I_{3_1} \cup \{2k+3\}$, the vertices $v_i$ and $v_j$ do not have a common $2$-neighbor.
		\item  For any $i \in I_{3_0} \cup \{-1\} $, $j \in I_{3_0} \cup \{2k+3\}$, the vertex $v_i$ is not adjacent to the vertex $v_j$.
         \item The vertex $v_0$ is not adjacent to $v_{2k+2}$.
	\end{enumerate}
\end{lemma}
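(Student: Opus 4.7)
The plan is to prove each of (a), (b), (c) by contradiction, showing that any violation either produces a poor path strictly shorter than $P$---contradicting the minimality of $P$---or exhibits a configuration already forbidden by one of the earlier reducibility lemmas (Lemmas~\ref{lem:weakvertices}--\ref{lem:RC3_0-three-3_1-etc}). A key observation used throughout is that each $3_1$-vertex has a unique $2$-neighbor, so two $3_1$-vertices share a $2$-neighbor if and only if their unique $2$-neighbors coincide.

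For part (a), suppose $v_i, v_j$ share a $2$-neighbor $w$ with $i<j$. If both indices are strictly internal to $P$, the detour $v_i w v_j$ together with the path segment from $v_i$ to $v_j$ yields a cycle of length $j-i+2$. For $j-i=2$ the resulting $4$-cycle $v_i v_{i+1} v_j w$ forces $v_{i+1}$ to be of poorest type (by inspecting its three neighbors and invoking Lemma~\ref{lem:RC3_0-three-3_1-etc}) and gives a shorter poor path ending at $v_{i+1}$; for $j-i\ge 4$, simply extracting the sub-path on either side of the shortcut is shorter. The peripheral cases (when $i\in\{-1,0\}$ or $j\in\{2k+2,2k+3\}$) split according to whether $v_1$ or $v_{2k+1}$ is of type 1 or type 2; in the type-1 subcase, Lemma~\ref{lem:RC3_0-three-3_1-etc} applies directly to $v_1$ or $v_{2k+1}$, and the type-2 subcase reduces (via a short argument on the unique $2$-neighbor of $v_{-1}$ or $v_{2k+3}$) to the internal situation.

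Part (b) is handled analogously. An edge between two $3_0$-vertices $v_{2a+1}, v_{2b+1}$ of $P$ produces an odd cycle of length $2(b-a)+1$. When $b-a=1$, one obtains a triangle $v_{2a+1}v_{2a+2}v_{2a+3}$ whose $3_1$-apex $v_{2a+2}$ has both of its $3$-neighbors inside the triangle; combining this with Lemmas~\ref{lem:RC3_1-3_1-3_1} and \ref{lem:RC3_0-2-3_1} together with the minimality of $P$ yields a contradiction. For $b-a\ge 2$ the chord $v_{2a+1}v_{2b+1}$ allows one to splice together a shorter poor path from the end pieces of $P$. Adjacencies involving $v_{-1}$ or $v_{2k+3}$ create short cycles near the endpoints that are already ruled out by Lemma~\ref{cla1} or by the earlier reducibility lemmas applied near the endpoint of poorest type.

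For part (c), I would treat $k=0$ directly: adjacency of $v_0$ and $v_2=v_{2k+2}$ produces the triangle $v_0 v_1 v_2$ of type $(3_1,3_0,3_1)$, contradicting Lemma~\ref{lem:RC3_0-2-3_1}. For $k\ge 1$, the fact that $v_0$ and $v_{2k+2}$ are $3_1$-vertices of degree $3$, each with one prescribed $3$-neighbor ($v_1$, respectively $v_{2k+1}$) and one unique $2$-neighbor, forces the putative edge $v_0v_{2k+2}$ to occupy the only remaining neighbor slot, so $v_{-1}$ and $v_{2k+3}$ (when they exist) must collapse onto $v_{2k+2}$ and $v_0$ respectively; combined with Lemma~\ref{cla1}(c) and a shorter-poor-path argument this is impossible. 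The main obstacle throughout will be tracking the many peripheral subcases involving $v_{-1}, v'_0, v_{2k+3}, v'_{2k+2}$ and verifying in each that the shortened structure is a genuine poor path (with endpoints of poorest type) and not merely a shorter path in $G$.
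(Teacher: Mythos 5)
Your overall strategy does not match the paper's and, more importantly, it cannot work in the general case. The paper does not derive the contradiction from the minimality of $P$ or from the earlier local forbidden configurations; instead it shows that each violation of (a), (b) or (c) creates a \emph{cycle} $v_iv_{i+1}\cdots v_ju$ (or $v_iv_{i+1}\cdots v_j$ for an adjacency), and that the subgraph induced by this cycle together with its pendant $2$-neighbors is \emph{reducible}: one colors $(G,\sigma)$ minus this subgraph by minimality of the counterexample, checks that the induced list assignment on the cycle satisfies the hypotheses of the cycle list-coloring lemmas (Lemmas~\ref{lem:List-4-Cycle}, \ref{lem:evencycle} and \ref{lem:oddcycle}), and extends the coloring --- contradicting that $(G,\sigma)$ is a counterexample. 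Your proposal never invokes these lemmas, and that is the essential missing idea.

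The concrete failure point is the generic case where $j-i$ is large. A sub-path of $P$ such as $v_jv_{j+1}\cdots v_{2k+1}$ is a poor path only if its new endpoint $v_j$ is of poorest type, i.e.\ all three neighbors of $v_j$ are $3_1$-vertices or the type-2 condition holds. In Lemma~\ref{cla1} this is exactly what happens (the chord $v_0v_j$ gives $v_j$ a third $3_1$-neighbor), which is why the shorter-poor-path argument is legitimate there. But in part (a) the shared vertex is a $2$-vertex and in part (b) the new neighbor is a $3_0$-vertex, so no interior vertex of $P$ acquires poorest type; in fact the minimality of $P$ already forces interior $3_0$-vertices \emph{not} to be of poorest type. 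Likewise the earlier reducibility lemmas (Lemmas~\ref{lem:RC3_1-2-3_1}--\ref{lem:RC3_0-three-3_1-etc}) only cover configurations of bounded size and say nothing about, e.g., two $3_1$-vertices at distance six along $P$ whose unique $2$-neighbors coincide. Hence your steps ``simply extracting the sub-path on either side of the shortcut is shorter'' and ``splice together a shorter poor path from the end pieces'' are unjustified, and the argument collapses exactly where the real work is needed. (Your $k=0$ instance of (c) via Lemma~\ref{lem:RC3_0-2-3_1} is fine, but it is essentially the only surviving piece.)
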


\begin{proof}
        We prove the first two claims by contradiction. We consider all the pairs $i, j$ for which one of the two statements does not hold. Among all such pairs then we choose one where $j$ is the minimum possible and, based on this condition, $i$ is the maximum possible. Then, depending on which of the statement fails for this pair of $i,j$, we consider two separate cases.

		\medskip
		\noindent 
		{\bfseries Case 1.} The statement $(a)$ does not hold for $i$ and $j$.  We will consider four subcases based on $i$ and $j$. 
 
\begin{itemize}
\item $(i,j)=(-1,2k+3)$. Thus, in particular, we assume $v_{-1}$ and $v_{2k+3}$ exists and that they are distinct from other vertices, hence, $v_0$ is not adjacent to $v_{2k+2}$. Let $(H,\sigma)$ be the subgraph of $(G, \sigma)$ induced by the vertices of $P$, the vertices $v_{-1},v_0,v_{2k+2}, v_{2k+3}$ and all their $2$-neighbors. Let $u$ be the common 2-neighbor of $v_{-1}$ and $v_{2k+3}$. Observe that, by the maximality of $j$ and the minimality of $i$, expect for $u$, every other $2$-vertex in $(H, \sigma)$ sees only one vertex in $(H,\sigma)$ and that there is no connection between $3_0$-vertices of $(H,\sigma)$. We may then color $(G,\sigma)-(H, \sigma)$ by the minimality of $(G,\sigma)$, and with respect to this partial coloring, consider the list of available colors on the vertices of $(H, \sigma)$. Observe that if we remove all 2-vertices but $u$ from $(H, \sigma)$, we have a subgraph $(H',\sigma)$ which is a signed $(2k+6)$-cycle. Furthermore, by Lemma~\ref{lem:2-Path}, the list coloring problem on $(H,\sigma)$ can be modified to a list coloring problem on $(H', \sigma)$ where $L'(u)=C$, $L'(v_{-1})$ and $L'(v_{2k+3})$ each is a neighbored $5$-set, each $L'(v_i)$, $i=0, 2, \ldots, 2k+2$, is a paired $10$-set and each $L'(v_j)$, $j=1, 3, \ldots, 2k+1$, is a neighbored $5$-set. But then, by Lemma~\ref{lem:evencycle}~\ref{even1}, we do have a coloring of $(H', \sigma)$ with respect to this list assignment $L'$.

\item $i=-1$, $j\in I_{3_1}$. Let $u$ be the common neighbor of $v_{-1}$ and $v_{j}$. Similar to the previous case, we consider the subgraph $(H,\sigma)$ induced by $v_{-1}, v_{0}, \ldots, v_{j}$ and all their 2-neighbors, noting that, by the choice of $j$ and $i$, each such a $2$-neighbor is adjacent to only one vertex in $(H, \sigma)$ and that no two $3_0$-vertices in $(H, \pi)$ are adjacent. Thus the subgraph $(H', \sigma)$ induced by $u$ and 3-vertices of $(H, \sigma)$ is a $(j+3)$-cycle.
Again, similar to the previous case, a coloring $\phi$ of $(G, \sigma)-(H,\sigma)$ induces a list assignment on $(H',\sigma)$ which satisfies the conditions of Lemma~\ref{lem:oddcycle}~\ref{odd2}, therefore, $\phi$ can be extend to the rest of $(G, \sigma)$.

\item $i=0$. By symmetry of $1$ and $2k+1$, we may assume $j\in I_{3_1}$. First we note that if $j=2k+2$, then $v_0$ is not adjacent to $v_{2k+2}$ as otherwise we have the forbidden configuration of Lemma~\ref{lem:RC3_1-2-3_1}. Let $u$ the common 2-neighbor of $v_i$ and $v_j$. For this case we will consider two subcases based on the type of the vertex $v_1$. If $v_1$ is of type 1, then we take $(H, \sigma)$ to be the subgraph induced by $v_0, v_1, v_2, \ldots, v_j$, $v'_0$ and all their 2-neighbors. Let $\phi$ be a coloring of $(G,\sigma)-(H, \sigma)$. Let $L$ be the list assignment induced on $(H, \sigma)$ by the partial coloring $\phi$. This $L$-coloring problem is reduced to an $L'$-coloring problem of the cycle $v_0v_1\cdots v_ju$ where $L'(u)=C$, $L'(v_0)$ and $L'(v_j)$ are neighbored 5-sets, $L'(v_1)$, by Lemma~\ref{lem:3-1}, contains at least one paired 4-set from $C^+$ and one paired 4-set from $C^-$, and the rest of $L'(v_k)$ are alternatively neighbored $5$-sets and paired $10$-sets. 
Overall this cycle with respect to $L'$ satisfies the conditions of Lemma~\ref{lem:evencycle}~\ref{even3}, and, therefore, the coloring $\phi$ can be extended to the rest of $(G,\sigma)$.

If $v_1$ is of type 2, then, by similar argument, the problem is reduced to the $L'$-coloring of the cycle $v_0v_1\cdots v_ju$ where the lists of $v_0$ and $v_1$ have changed the roles, with all other remaining the same as before. We may then apply Lemma~\ref{lem:evencycle}~\ref{even2} to complete the proof.

\item $i\in I_{3_1}, i\geq 2$. By the symmetry of $i=0$ and $j=2k+2$ we may assume $j\in I_{3_1}$, $j\neq 2k+2$. As in the previous cases, we let $(H,\sigma)$ be the subgraph induced by $v_0,v_1, \ldots v_j$, one of $v_{-1}$ or $v'_{0}$ depending on the type of $v_1$, and all the $2$-neighbors of already chosen vertices. Let the common 2-neighbor of $v_i$ and $v_j$ be $u$ and note that all other 2-neighbors of the vertices in $(H, \sigma)$ are distinct. Furthermore, no pair of $3_0$-vertices in $(H,\sigma)$ are adjacent. Assume that $(G,\sigma)-(H,\sigma)$ admits a list-coloring $\phi$ and let $L$ be the associated list assignment on $(H, \sigma)$. As before, we will reduce the $L$-coloring problem of $(H, \sigma)$ to an $L'$-coloring of the cycle $uv_iv_{i+1}\cdots v_j$ which satisfies the conditions of Lemma~\ref{lem:evencycle}~\ref{even2}. To get $L'$, if $v_1$ is of type 1 we apply Lemma~\ref{lem:3-1} to $v_1$ from two directions after which we have a paired 4-set of colors available at $v_1$. Then using Lemma~\ref{oneEdgeRestriction} \ref{P2-1.5}, \ref{P2-2}, \ref{P2-3} and Lemma~\ref{lem:2-Path}, we update the lists of vertices $v_l$ of $P$ with $l\leq i$ such that we have, alternatively, lists of size $6$ and $3$ until $v_{i-1}$, and $|L'(v_i)|\geq 8$. The case when $v_1$ is of type 2 is quite similar. 
The only difference is that at the start $v_1$ would a neighbored $3$-set rather than a paired $4$-set.
\end{itemize} 

{\bfseries Case 2.} The statement $(b)$ does not hold for $i$ and $j$. The proof technique is quite similar to the previous case with less subcases to consider, so we only give the general idea. The case of $i=-1, j=2k+3$ is not possible by Lemma~\ref{lem:RC3_1-3_1-3_1}. In all other subcases, we consider the subgraph $(H, \sigma)$ induced by vertices $v_0,v_1,v_2, \ldots, v_j$, one of $v_{-1}$ or $v'_{0}$, and their $2$-neighbors. The problem then is reduced to a list coloring problem on $(H, \sigma)$, but as $(H, \sigma)$ has a unique cycle, we may further reduce the problem to list coloring of the cycles. However, in all but one of the cases we may apply Lemma~\ref{lem:oddcycle}~\ref{odd1}. In the exceptional case when  $v_0$ is adjacent to $v_{2k+2}$ and $v_1$ is adjacent to $v_{2k+1}$, we consider the $4$-cycle $v_0v_{2k+2}v_{2k+1}v_1$ and let $(H, \sigma)$ be the subgraph induced by this $4$-cycle and the two $2$-neighbors of $v_0$ and $v_{2k+2}$. Then a coloring of $(G,\sigma)-(H,\sigma)$ can be extended to $(H, \sigma)$ by Lemma~\ref{lem:List-4-Cycle}.

Finally, we prove part $c$: $v_0$ is not adjacent to $v_{2k+2}$. Assume to the contrary, $v_0$ is adjacent to $v_{2k+2}$. Then $v_0 v_1 \cdots v_{2k+2}$ is a cycle. In the above arguments, we have shown that first of all there is no chord in this cycle, secondly, for any two $3_1$-vertices of the cycle, their $2$-neighbors are distinct. As before we consider the signed subgraph $(H, \sigma)$ induced by the cycle and its 2-neighbors. Then again a coloring of  $(G,\sigma)-(H,\sigma)$ can be extended to $(H, \sigma)$ by Lemma~\ref{lem:oddcycle}~\ref{odd1}.
\end{proof}

\begin{lemma}\label{lem:RC30-31-chain}
	The minimum counterexample $(G,\sigma)$ contains no poor path.
\end{lemma}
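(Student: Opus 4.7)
The plan is to argue by contradiction. Assume $(G,\sigma)$ contains a poor path and let $P = v_1 v_2 \cdots v_{2k+1}$ be one of minimum length. I would define $H$ to be the subgraph of $G$ induced by $V(P)$, together with the end-extension vertices at each end of $P$ (namely $\{v_0, v_0'\}$ at a type-1 endpoint and $\{v_0, v_{-1}\}$ at a type-2 endpoint), and the 2-neighbors of all the $3_1$-vertices in this set. By Lemmas~\ref{cla1} and~\ref{cla2} these vertices are pairwise distinct and the restriction of $H$ to its 3-vertices is a tree $T$: a spine (a path) along $P$ together with its end extensions, possibly with a single extra pendant $3_1$-vertex attached at each type-1 endpoint of $P$. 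Moreover, each 2-vertex of $H$ is pendant in $H$ and its outer neighbor lies in $V(G) \setminus V(H)$.

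By the minimality of $(G,\sigma)$, the signed graph $(G,\sigma) - V(H)$ admits an edge-sign preserving homomorphism $\phi$ to ${\rm DSG}(K_6, M)$; let $L$ be the list assignment it induces on $V(H)$. I would reduce $L$ step by step. Via Lemma~\ref{lem:2-Path}, each 2-pendant with precolored outer neighbor absorbs into a reduction of its $3_1$-parent's list to a paired $10$-set. Via Observation~\ref{obs:8-set}, each interior $3_0$-vertex of $P$ and each type-2 endpoint of $P$ has its external precolored 3-neighbor reduce its list to a neighbored $5$-set. A type-1 endpoint $v_1$ has all three of its neighbors inside $H$ so $L(v_1) = C$ initially; applying Lemma~\ref{lem:3-1} to each of the two side branches through $v_0$ and $v_0'$ trims $L(v_1)$ to a paired set of size at least $8$ per branch, and combining the two restrictions $L(v_1)$ becomes a paired set of size at least $4$.

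With the reductions in place, the remaining problem is a list-coloring of the spine of $T$: a path whose interior $3_1$-vertices carry paired $10$-sets and interior $3_0$-vertices carry neighbored $5$-sets, with the endpoint-list sizes described above. Depending on the endpoint types, either Lemma~\ref{5-10path}, or a short direct propagation via Lemma~\ref{oneEdgeRestriction}, produces a list-coloring of the spine; this in turn lifts through the 2-pendants to a valid $L$-coloring of $H$, contradicting the assumption that $(G,\sigma)$ is a counterexample. The main obstacle is the bookkeeping over the three endpoint-type combinations (type-1/type-1, type-1/type-2, type-2/type-2), and over the small cases $k \in \{0,1\}$, where the spine has very few vertices and Lemma~\ref{5-10path} is not directly applicable; in those cases one resolves the problem by hand, typically using Observation~\ref{obs:3path} or Lemma~\ref{lem:P_3with12,10,8} to extend the partial coloring across the short spine.
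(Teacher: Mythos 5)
Your proposal follows essentially the same route as the paper: form the subgraph $H$ around a minimum poor path, use Lemmas~\ref{cla1} and~\ref{cla2} to see that it is a tree, color the complement by minimality of $(G,\sigma)$, absorb the pendant $2$-branches and end-extensions via Lemma~\ref{lem:2-Path} and Lemma~\ref{lem:3-1}, and then propagate admissible lists along the path with Lemma~\ref{oneEdgeRestriction}, which is exactly the paper's rooted-tree argument showing $L^a(v_1)\neq\emptyset$. The only quibble is that Lemma~\ref{5-10path} does not directly apply to the spine as you describe it (its hypotheses require a paired $3$-set at one end and a one-sided $4$-set at the other on an even path, which do not match the endpoint lists you derive), but your fallback of direct propagation via Lemma~\ref{oneEdgeRestriction} is precisely what the paper does and it closes every endpoint-type combination, including $k\in\{0,1\}$.
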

\begin{proof}
	Assume to the contrary and let $P=v_1\cdots v_{2k+1}$ be a minimum poor path. Let $(H,\sigma)$ be the subgraph of $(G,\sigma)$ induced by $v_0,v_1,\ldots, v_{2k+2}$, $v'_0$ or $v_{-1}$ depending on the type of $v_0$, $v'_{2k+2}$ or $v_{2k+3}$ depending on the type of $v_{2k+1}$ and all of their $2$-neighbors. Observe that by Lemma~\ref{cla1} and Lemma~\ref{cla2}, $H$ is a tree. By the minimality of $(G, \sigma)$, we have a coloring of $(G, \sigma)-(H,\sigma)$ which induces a list assignment $L$ on the signed tree $(H,\sigma)$. To complete the proof, having considered $v_1$ as the root of this tree we will show that $L^a(v_1)\neq \emptyset$.

    If $k=0$, then $H$ is either the graph $(a)$ of Figure~\ref{fig:PoorPath} or the graph $(b)$. In either case, to compute the number of colors forbidden on $v_{0}$, we apply Lemma~\ref{lem:3-1} to the $v_{-1}$-branch and Lemma~\ref{lem:2-Path} to the $2$-vertex branch, concluding that 
    $L^a(v_{0})$ contains a paired 6-set. Thus, by Lemma~\ref{oneEdgeRestriction}~\ref{P2-3}, the $v_0$-branch will forbid at most a pair of colors from $L(v_1)$. 
    If we have the case $(a)$ of the figure, then each of $v_2$ and $v'_2$, by Lemma~\ref{lem:3-1}, will forbid at most two pairs from $L(v_1)$, altogether we have at most five pairs, thus in all case $L^a(v_{1})$ contains at least one pair of color. 
    If we have the case $(b)$ of the figure, then by symmetry of $v_0$ and $v_2$ we have at most a pair forbidden from $L(v_1)$ by $v_2$. In this case $L(v_1)$ was a neighbored 5-set, this $L^a(v_{1})$ still contains at least one element.

    For $k\geq 1$, depending on the type of $v_{2k+1}$ and just as in the previous case, $L^a(v_{2k+1})$ contains either a paired 4-set or a neighbored 3-set. Then, by Lemma~\ref{lem:2-Path} and Lemma~\ref{oneEdgeRestriction} (\ref{P2-2} or \ref{P2-1.5}), $L^a(v_{2k})$ contains a paired $6$-set, which in turn implies that $L^a(v_{2k-1})$ is a neighbored $3$-set. Repeating this process, $L^a(v_{2})$ contains a paired $6$-set, thus from this branch of the tree at most one pair of colors will be forbidden on $v_1$. Now if $v_1$ is of type 1, then the branches corresponding to $v_0$ and $v'_0$ each may forbid at most two pairs of colors, and since $L(v_1)=C$, we still have a pair of available colors. If $v_1$ is of type 2, then the $v_0$-branch forbids only one pair, and since $L(v_1)$ is a neighbored 5-set, we still have a color available at $v_1$.
\end{proof}

\subsection{Discharging Method}\label{sec:discharging}

Recall that $(G, \sigma)$ is a minimum counterexample to Theorem~\ref{main-theorem}. A \emph{$3^-$-subgraph} of $G$ is a connected component $H$ of the subgraph induced by the set of 3-vertices and 2-vertices of $G$. Given a $3^-$-subgraph $H$, let $n_0(H)$ be the number of vertices of $H$ which are $3_0$-vertices in $G$ and let $n_1(H)$ be the number of $3_1$-vertices of $G$ that are of degree 3 in $H$, i.e., the number of vertices in $H$ each of which has one 2-neighbor and two 3-neighbors. 

\begin{lemma}\label{lem:n_0-n_1}
	In any $3^-$-subgraph $H$ of $G$, $n_0(H) \geq n_1(H)$.
\end{lemma}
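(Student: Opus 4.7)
Set up a bipartite auxiliary graph $B$ with parts $N_0$, the $3_0$-vertices of $H$, and $N_1^{\ast}$, the $3_1$-vertices of $G$ that have degree $3$ in $H$ (so $|N_1^\ast| = n_1(H)$); an edge of $B$ joins $u \in N_1^{\ast}$ to $v \in N_0$ whenever $u$ and $v$ are adjacent in $G$. The plan is to show $|N_1^{\ast}| \le |N_0|$ by a short discharging argument on $B$, and for this I need three degree facts. I will use that every $3$-vertex of $G$ is either $3_0$ or $3_1$; this follows from Lemma~\ref{lem:weakvertices} (the same argument, with one more propagating branch, also rules out $3_3$-vertices).

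First I record the degree bounds in $B$. For $u \in N_1^{\ast}$, both non-$2$ neighbours of $u$ are $3$-vertices by definition, and Lemma~\ref{lem:RC3_1-3_1-3_1} forbids them both from being $3_1$-vertices; hence at least one lies in $N_0$, giving $1 \le \deg_B(u) \le 2$. For $v \in N_0$, if $\deg_B(v) = 3$ then all three neighbours of $v$ in $G$ are $3_1$-vertices, which makes $v$ a poorest $3_0$-vertex of type~1 in the sense of Subsection~\ref{sec:badchain} and therefore a poor path of length $0$, contradicting Lemma~\ref{lem:RC30-31-chain}. So $\deg_B(v) \le 2$ for every $v \in N_0$.

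The heart of the argument is the following: whenever $\deg_B(v) = 2$ with $B$-neighbours $u_1,u_2$, both $u_i$ satisfy $\deg_B(u_i) = 2$. Suppose otherwise, say $\deg_B(u_1) = 1$. Then the other non-$2$ neighbour $y$ of $u_1$ is a $3$-vertex (by fullness of $u_1$) but not in $N_0$, so $y$ is a $3_1$-vertex, and clearly $y \ne v$. By Lemma~\ref{lem:RC3_0-2-3_1}, the vertices $v,u_1,u_2$ cannot form a triangle, hence $y \ne u_2$, so $y$ is a $3_1$-neighbour of $u_1$ distinct from $u_2$. Together with $v$ having the two $3_1$-neighbours $u_1,u_2$, this exhibits $v$ as a poorest $3_0$-vertex of type~2; once again $v$ alone is a forbidden poor path, contradicting Lemma~\ref{lem:RC30-31-chain}.

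With these three facts in hand, the discharging is immediate: assign charge $1$ to each $u \in N_1^{\ast}$ and let $u$ send $1/\deg_B(u)$ to each of its $B$-neighbours. Each $v \in N_0$ receives total charge $0$, at most $1$, or exactly $2 \cdot \tfrac{1}{2} = 1$ according as $\deg_B(v) = 0,1,2$, where the last case uses the third fact so both contributors each send exactly $\tfrac12$. Since total charge is conserved, $n_1(H) = |N_1^{\ast}| \le |N_0| = n_0(H)$. The only step requiring any real care is the ``type~2'' deduction above, where one must combine Lemmas~\ref{lem:RC3_1-3_1-3_1}, \ref{lem:RC3_0-2-3_1}, and \ref{lem:RC30-31-chain} to guarantee that the would-be counterexample at a degree-$2$ vertex $v$ really is a forbidden poorest configuration.
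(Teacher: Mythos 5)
Your auxiliary bipartite graph $B$ is a reasonable reformulation, and your first degree fact ($1\le \deg_B(u)\le 2$ for $u\in N_1^{\ast}$, via Lemma~\ref{lem:RC3_1-3_1-3_1}) is correct. The gap is in the other two facts, both of which rest on a misreading of what a poor path of length $0$ is. In Subsection~\ref{sec:badchain}, a single vertex $v_1$ is a poor path only when it is of poorest type \emph{from both directions}, i.e., its three neighbours can be split into two disjoint witness sets, one certifying type~1 or type~2 on each side; the paper notes that type~1 cannot occur on both sides, so the only $k=0$ poor paths are the configurations $(a)$ and $(b)$ of Figure~\ref{fig:PoorPath}. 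A $3_0$-vertex all of whose neighbours are $3_1$-vertices is merely type~1 on one side, and it is a forbidden $k=0$ poor path only if, in addition, one of those neighbours has a further $3_1$-neighbour. Likewise, a $3_0$-vertex with two $3_1$-neighbours one of which has another $3_1$-neighbour is only ``one end of a poor path'' --- exactly the hypothesis of Rule~1, case $ii$, in the paper's proof --- and is not excluded by Lemma~\ref{lem:RC30-31-chain}. Hence neither your bound $\deg_B(v)\le 2$ nor your claim that both $B$-neighbours of a $B$-degree-$2$ vertex of $N_0$ have $B$-degree $2$ is justified.

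This is not merely presentational: the paper's own proof explicitly entertains a $3_0$-vertex $u$ with three $3_1$-neighbours each of degree $3$ in $H$, which in your graph has $\deg_B(u)=3$. If each of these three neighbours has its other $3$-neighbour a $3_0$-vertex (so each has $B$-degree $2$), none of the reducible configurations of Subsection~\ref{sec:forbidden-configurations} is violated, yet under your scheme $u$ would receive $3\cdot\tfrac{1}{2}=\tfrac{3}{2}>1$ and your local bound fails. The paper's two-rule discharging --- in particular the uniqueness requirement on the path $P$ in Rule~1 and the case analysis around exactly such a vertex $u$ --- is the machinery needed to handle these configurations; your proposal needs an analogous refinement before the counting goes through.
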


\begin{proof}
Our proof is by discharging technique. We assign an initial charge of 1 to all in $H$ that are $3_0$-vertices of $G$ and a charge of 0 to all other vertices of $H$. We will introduce discharging rules and prove that, upon applying these rules, each vertex in $H$ which is a $3_1$-vertex of $G$ receives a total charge of $\frac{1}{2}$ while no $3_0$-vertex of $G$ in $H$ loses more than $\frac{1}{2}$. That would prove our claim. The discharging rules we use are as follows.
	
{\bf Rule 1} Given a $3_0$-vertex $v_1$ of $G$, assume there exists a unique path $P=v_1\cdots v_{2k+1}$, $k \geq 0$, satisfying the followings:  (1) For every odd value of $i$, the vertex $v_i$ is a $3_0$-vertex of $G$ and for every even value of $i$, the vertex $v_i$ is a $3_1$-vertex of $G$. (2) Either $i.$ $v_{2k+1}$ has two other neighbors that are $3_1$-vertices of $G$, or $ii.$ $v_{2k+1}$ has one $3_1$-neighbor, $v_{2k+2}$, which itself has a $3_1$-neighbor in $G$ ($P$ can be seen as one end of a poor path).  Then in case $i.$ if $k\geq 1$, $v_1$ gives a charge of $\frac{1}{2}$ to $v_2$, in case $ii.$ $v_1$ will give a charge of  $\frac{1}{2}$ to $v_2$ (that is even if $k=0$).
  
{\bf Rule 2} Each $3_1$-vertex of $G$ which is of degree 3 in $H$ and is of charge 0 after Rule 1, receives a charge of $\frac{1}{4}$ from each of its $3_0$-neighbor.

First observe that a $3_1$-vertex $x$ of $G$ which is of degree 3 in $H$, by Lemma~\ref{lem:RC3_1-3_1-3_1}, has at least one $3_0$-neighbor say $y$. If the other $3$-neighbor $z$ of $x$ is a $3_1$-vertex, then $P=y$ is a path  described in Rule 1 of type $ii.$ where $k=0$ and $x=v_{2}$, moreover, this is unique such a path as any other such a path $P'$ together with $P$ will form a poor path, contradicting Lemma~\ref{lem:RC30-31-chain}. Therefore, by Rule 1, $x$ will receive a charge of $\frac{1}{2}$ from $y$. If $z$ is also a $3_0$-vertex then either it receives a charge of $\frac{1}{2}$ from one of $y$ or $z$ when applying Rule 1 or, it will receive a charge of $\frac{1}{4}$ from each of them, thus in all cases it will have a final charge of $\frac{1}{2}$.

It remains to show that no $3_0$-vertex of $G$ in $H$ will lose more than $\frac{1}{2}$ of its charge. Since $G$ has no poor path, and that Rule 1 can only apply if there is a unique path $P$, it may only apply in one direction on a given $3_0$-vertex. Thus Rule 1, on its own, will take a charge of at most $\frac{1}{2}$ from a $3_0$-vertex. 

Next we consider a $3_0$-vertex $u$ which has three $3_1$-neighbors $u_1$, $u_2$ and $u_3$ each of which is a $3$-vertex of $H$. Let $u'_1$, $u'_2$ and $u'_3$ be neighbors of $u_1$, $u_2$ $u_3$, respectively, which are not $u$ and not $2$-vertices. Thus each of them has to be a $3_0$-vertex of $G$ as otherwise we have a poor path with $k=0$.  First we assume that $u'_1u_1uu_2u'_2$ is a part of a cycle where vertices are alternatively $3_0$-vertices and $3_1$-vertices of $G$. We claim that in this case $u'_3u_3u$ is the unique path $P$ satisfying Rule 1. Otherwise, a second path $P'$ starting at $u'_3$ exists. If $P'$ has no common vertex with $P$, then $P$ and $P'$ together form a poor path, contradicting Lemma~\ref{lem:RC30-31-chain}. Else $P'$ must intersect the cycle to reach $u$, in which case the common part of $P'$ and the cycle form a poor path. Thus $u_3$ receives a charge of $\frac{1}{2}$ from $u'_3$ by Rule 1. When applying Rule 2, $u$ loses only a total charge of $\frac{1}{2}$. When there is no such a cycle, then each of $u'_iu_iu$, $i=1,2,3$, is a path satisfying Rule 1 with $k=1$, furthermore, each of them satisfies the condition of being unique, as otherwise we will have a poor path. Thus $u$ will lose no charge in this case.

It remains to show that if a $3_0$-vertex $u$ has given a charge of $\frac{1}{2}$ to a $3_1$-neighbor $u_1$ by Rule 1 then $u$ will not lose any charge by Rule 2. Let $u_2$ be another $3_1$-neighbor of $u$ which is a $3$-vertex of $H$. Let $u'_2$ be the other neighbor of $u_2$ which is not a $2$-vertex. We claim that $u'_2$ is a $3_0$-vertex. Otherwise, together with the path $P$ (of Rule 1) we have a poor path. Then, by adding $u'_2$ and $u_2$ to $P$, we get a unique path satisfying the conditions of Rule 1, therefore $u'_2$ must have given a charge of $\frac{1}{2}$ to $u_2$ and, hence, $u_2$ does not take any charge when applying Rule 2.
\end{proof}

We are now ready to prove the Theorem~\ref{main-theorem2}.

\begin{proof}(Of Theorem~\ref{main-theorem2}.) By discharging technique, the initial charge assigned to each vertex $v$: $$c(v)=d(v)-\frac{14}{5}.$$ 
Since we have assumed that the average degree of $G$ is strictly less than $\frac{14}{5}$, the total charge is a negative value.
However, after applying the discharging rule introduced next, we will partition the vertex set so that on each part the sum of final charges is positive. This would be in contradiction with the total charge being negative and would complete the proof of the theorem.
The discharging rule is as follow:
\medskip

{\bf Discharging rule:} A $4^+$-vertex gives a charge of $\frac{2}{5}$ to each of its $2$-neighbors and a charge of $\frac{1}{5}$ to each of its $3_1$-neighbors.

\medskip

Let $c^*(v)$ be the final charge of the vertex $v$ after discharging. It is immediate that if $d(v)\geq 5$, then $c^*(v)\geq \frac{1}{5}$.
For a $4$-vertex $v$, it follows from Lemma~\ref{lem:weakvertices} and Lemma~\ref{lem:RC4_3-3_1} that $c^*(v)\geq 0$. To complete the proof, we show that the total charges on each connected component of the $3^-$-subgraph of $G$ is non-negative. 

Let $H$ be one such a component. If $H$ has no vertex which is a $2$-vertex in $G$, then all vertices have positive charges. Let $v$ be a $2$-vertex of $G$ in $H$. Observe that if $H$ consists of only $v$, then both its neighbors are $4^+$-vertices and $c^*(v)=0$. Otherwise, either $c^*(v)=-\frac{2}{5}$ or $c^*(v)=-\frac{4}{5}$. For the former to be the case, one of the neighbors of $v$ must be a $4^+$-vertex of $G$, thus $v$ has a unique neighbor in $H$. For the latter to be the case, both neighbors of $v$ must be $3$-vertices and thus $v$ has two neighbors in $H$. Let $l$ be the number of $2$-vertices of $G$ in $H$ that their final charge is $-\frac{2}{5}$ and let $k$ be the number of $2$-vertices of $G$ in $H$ that their final charge is $-\frac{4}{5}$. By Lemma~\ref{lem:weakvertices}, the neighbors of these $2$-vertices are $l+2k$ distinct $3_1$-vertices of $G$ in $H$. Of these $l+2k$ vertices in $H$, suppose $p$ of them are of degree 3 in $H$, and that the rest are either of degree $2$ or $1$, the latter being possible only when $H$ is just an edge. Observe that $3$-vertices of $G$ with at most two neighbors in $H$ have a third neighbor that is a necessarily $4^+$-vertex of $G$, and, therefore, such vertices have charge at least $\frac{2}{5}$. For the $p$ vertices that are $3_1$-vertices of $G$ in $H$, by Lemma~\ref{lem:n_0-n_1}, there must be at least $p$ other vertices in $H$ that are $3_0$-vertices of $G$. As all these vertices have a charge of $\frac{1}{5}$, the over-all charge in a connected component $H$ of the $3^-$-subgraph of $G$ is non-negative, proving our claim.
\end{proof}

\section{Mapping signed graphs to $(K_{2k}, M)$}\label{sec:K2k}

When the target signed graph is $(K_8, M)$, rather than $(K_6,M)$, we show the condition of a maximum average degree strictly smaller than $3$ is enough for $(G, \sigma)$ to map to it. The proof is quite easy in this case. In the next section, we will see that this condition is not only the best possible for $(K_8,M)$ but that it cannot be improved for $(K_{2k},M)$, $k\geq 4$, either. As in the previous case to prove our claim we will work with ${\rm DSG}(K_8, M)$.

\begin{theorem}\label{main-theorem22}
Every signed graph of maximum average degree less than $3$ admits an edge-sign preserving homomorphism to ${\rm DSG}(K_8, M)$. Moreover, the bound $3$ is best possible.
\end{theorem}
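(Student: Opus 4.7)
The strategy mirrors that of the proof of Theorem~\ref{main-theorem2}: by Theorem~\ref{thm:Hom->Edge-SignPres} it suffices to work with edge-sign preserving homomorphisms to ${\rm DSG}(K_8, M)$, and we consider a minimum counterexample $(G, \sigma)$. The proof should indeed be easier than for $(K_6, M)$ because the target now has $16$ vertices (instead of $12$), its vertex set splits into $4$ layers of $4$ vertices (instead of $3$ layers), and each vertex has $7$ positive and $7$ negative neighbors (instead of $5$ of each), providing considerably more slack when extending a partial coloring.

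First, I would set up the analog of Lemma~\ref{oneEdgeRestriction} for ${\rm DSG}(K_8, M)$: for any color $c$ and sign $\alpha$, the $\alpha$-neighborhood of $c$ has $7$ elements, with at most $2$ of them in any single layer. In particular, at least $5$ such neighbors lie outside any specified layer. This simple counting is the main source of flexibility.

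Second, I would identify the reducible configurations. The trivial one is that $G$ has no vertex of degree at most $1$. The main configuration is an edge between two $2$-vertices: if $uv$ is such an edge with $u'$ the other neighbor of $u$ and $v'$ the other neighbor of $v$, color $G - \{u, v\}$ by minimality, pick $\phi(v)$ as a $\sigma(vv')$-neighbor of $\phi(v')$ lying in a layer different from that of $\phi(u')$ (at least $5$ candidates available by the slack observation), then extend to $\phi(u)$ using that $\phi(v)$ and $\phi(u')$ lie in different layers, which guarantees a common neighbor with the required signs (analogously to Observation~\ref{obs:K2DifferentLayers}). Further configurations, required to balance the discharging --- namely, a $3$-vertex having a $2$-neighbor, and for $k \ge 4$, a $k$-vertex having more than $2(k-3)$ $2$-neighbors --- are handled analogously, by rerunning the list-coloring arguments of Section~\ref{sec:precoloring} in the slightly larger target, which simplifies because the relevant list sizes grow by $2$.

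Third, the discharging: assign the initial charge $c(v) = d(v) - 3$, whose sum is $2|E(G)| - 3|V(G)| < 0$ by the $\mad$ hypothesis. Apply the single rule: each vertex of degree at least $3$ sends $\tfrac{1}{2}$ to each of its $2$-neighbors. Using the reducibility constraints, each vertex ends with non-negative charge (a $2$-vertex has $-1 + 2\cdot\tfrac{1}{2} = 0$; a $3$-vertex has $0$ since it has no $2$-neighbor; a $k$-vertex for $k \ge 4$ has $(k-3) - r/2 \ge 0$ where $r \le 2(k-3)$), producing the desired contradiction with the total being negative.

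The main obstacle I foresee is ruling out a $3$-vertex with a $2$-neighbor when the other two neighbors happen to be precolored with a ``bad pair'' of colors, meaning two colors sitting in the same layer with an incompatible sign configuration, so that no valid color exists for the $3$-vertex itself. Handling this either requires a Kempe-style local recoloring of $G - \{u, v\}$ to escape the bad pair, or a slight strengthening of the reducible configuration to include an additional structural hypothesis on the $3$-vertex, in the spirit of Lemmas~\ref{lem:RC4_3-3_1}--\ref{lem:RC3_0-three-3_1-etc}. The tightness of the bound $3$ is deferred to Section~\ref{sec:examples}, where an explicit construction of a signed graph with $\mad = 3$ not admitting a homomorphism to $(K_8, M)$ is given.
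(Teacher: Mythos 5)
Your overall strategy is exactly the paper's: a minimum counterexample, the same list of reducible configurations ($1$-vertices, $2_1$-vertices, $3_1$-vertices, and $k$-vertices with more than $2(k-3)$ neighbors of degree $2$ for $k=4,5$), and the same discharging with initial charge $d(v)-3$ and a single rule transferring $\tfrac{1}{2}$ across each edge incident to a $2$-vertex. The counting observation you start from (seven $\alpha$-neighbors of any color, at most two per layer) is precisely the content of the paper's analogue of Lemma~\ref{oneEdgeRestriction} for ${\rm DSG}(K_8,M)$, and your treatment of the $2_1$-vertex is correct.

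The one genuine gap is the obstacle you flag yourself: ruling out a $3_1$-vertex $v$ when its two $3^+$-neighbors $v_1,v_2$ could be precolored with a bad pair (same layer, incompatible signs), leaving $v$ with no admissible color. Neither of your proposed fixes is needed, and the Kempe-style recoloring in particular would be delicate to carry out. The resolution is to choose the reduction more carefully: delete only the $2$-neighbor $u$ of $v$, not $v$ itself. By minimality, $G-u$ admits an edge-sign preserving homomorphism $\phi$, and the mere existence of $\phi(v)$ certifies that $\phi(v_1)$ and $\phi(v_2)$ are \emph{not} a bad pair; hence $C\setminus(F_{v_1}(v)\cup F_{v_2}(v))$ contains two colors lying in different layers. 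The $2$-vertex $u$, whose list $C\setminus F_{u_1}(u)$ is a neighbored $7$-set, forbids only a paired $2$-set at $v$ (both colors in one layer), so at least one of those two colors survives and the coloring extends to $v$ and then to $u$. With this adjustment your argument closes completely; the tightness construction is the family $(G_l,\sigma)$ of Proposition~\ref{thm:K8tight}, whose average degree tends to $3$.
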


To prove Theorem~\ref{main-theorem22}, we assume that $(G, \sigma)$ is a minimum counterexample which does not admit an edge-sign preserving homomorphism to ${\rm DSG}(K_8, M)$. First we study the properties of a list homomorphism of a signed rooted tree to ${\rm DSG}(K_8, M)$.

\begin{lemma}
	\label{8oneEdgeRestriction}
	Let $xy$ be a signed edge and let $L$ be its ${\rm DSG}(K_8, M)$-list assignment. Then the following statements hold:
	\begin{enumerate}[leftmargin =3em, label=(\arabic*)]
		\item \label{8-P2-0.5} If $|L^a(x)|= 1$, then $F_x(y)$ is a paired set of size $9$.
		\item \label{8-P2-2.5} If $L^a(x)$ contains either a neighbored $5$-set or a one-sided $6$-set, then $F_x(y)$ is a paired set of size at most $2$.
	\end{enumerate}
\end{lemma}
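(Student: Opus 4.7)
The plan is to follow the same strategy as the proof of Lemma~\ref{oneEdgeRestriction}, adapted to the larger target ${\rm DSG}(K_8,M)$, which has $|C|=16$, four layers, and positive-neighborhoods of size $7$ at each vertex. By Observation~\ref{obs:L-Lx} applied to the single edge $xy$ (switch at $x$ if needed), I may assume throughout that $xy$ is a positive edge. Then by definition
\[
F_x(y) \;=\; \{c\in C : \text{for every } c'\in L^a(x),\ m^*(cc')\neq +\} \;=\; C \setminus \bigcup_{c'\in L^a(x)} N^{+}(c'),
\]
so in both parts the task reduces to computing the union of positive neighborhoods of a small set of colors. Note that $F_x(y)$ can only shrink as $L^a(x)$ grows, so in part~(2) it is enough to treat the case $|L^a(x)|=5$ or $|L^a(x)|=6$ exactly.

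For part~(1), suppose $L^a(x)=\{c\}$; by symmetry among vertices of ${\rm DSG}(K_8,M)$ I may take $c=1^+$. The positive neighbors of $1^+$ are precisely the seven vertices $\{3^+,4^+,5^+,6^+,7^+,8^+,2^-\}$ (all same-side neighbors except $\textit{Pair}(1^+)=2^+$, together with the opposite-side pair-image $2^-$). Hence $F_x(y)$ is the nine-element set
\[
\{1^+,\,2^+,\,1^-,\,3^-,\,4^-,\,5^-,\,6^-,\,7^-,\,8^-\},
\]
and a direct check shows that every element but the single unpaired $1^-$ has its pair in the set, so $F_x(y)$ is a paired $9$-set, as required.

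For part~(2), I split into the two hypothesized subcases and in each reduce to a WLOG representative using the symmetries of ${\rm DSG}(K_8,M)$. If $L^a(x)$ contains a neighbored $5$-set, take this set to be $\{1^+,2^+,3^+,4^+,5^-\}$; unioning the positive neighborhoods shows that every color in $C$ except $5^-$ and $6^-$ appears, so $F_x(y)\subseteq\{5^-,6^-\}$, a paired set of size at most $2$. If $L^a(x)$ contains a one-sided $6$-set, take it to be $\{1^+,2^+,3^+,4^+,5^+,6^+\}$; each $i^+$ in this list contributes $\{j^+:j\neq i,\,j\ne\textit{Pair}(i)\}\cup\{\textit{Pair}(i)^-\}$ to the positive union, so together they cover $C^+$ entirely and cover $\{1^-,2^-,3^-,4^-,5^-,6^-\}$ on the other side, leaving $F_x(y)\subseteq\{7^-,8^-\}$, again a paired $2$-set. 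I do not anticipate any real obstacle here: the argument is pure enumeration, and the only thing to be careful about is invoking the symmetries of $(K_8,M)$ to justify the WLOG choice of representatives (exchanging the two sides, permuting the four pairs, and swapping the two elements of a pair), which leaves the notions of ``paired'', ``layered'', ``one-sided'' and ``neighbored'' invariant.
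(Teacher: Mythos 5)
Your proof is correct and follows exactly the approach the paper intends: the lemma is stated without proof as the routine ${\rm DSG}(K_8,M)$-analogue of Lemma~\ref{oneEdgeRestriction}, and your direct enumeration of positive neighborhoods (after the standard switching reduction and WLOG choice of representatives) is precisely that verification, with all the computed sets $\{1^+,2^+,1^-,3^-,\dots,8^-\}$, $\{5^-,6^-\}$ and $\{7^-,8^-\}$ checking out.
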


\begin{corollary}
	\label{8-path3twoolored}
	Let $(P_3, \sigma)$ be a signed path $xvy$ and let $L$ be a ${\rm DSG}(K_8, M)$-list assignment of $(P_3, \sigma)$ with $L(v) =C$, $L(x)=\{c_x\}$ and $L(y)= \{c_y\}$. Then $C \setminus (F_x(v) \cup F_y(v))$ contains two colors which are in different layers unless one of the followings holds:
	\begin{enumerate}[leftmargin =3em, label=(\arabic*)]
		\item $c_x$ and $c_y$ are in the same layer but on different sides, and $P_3$ is a positive path;
		\item $c_x$ and $c_y$ are in the same layer and on the same side, and $P_3$ is a negative path.
	\end{enumerate}
\end{corollary}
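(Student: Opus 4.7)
The plan is to read off the structure of $S_x := C\setminus F_x(v)$ and $S_y := C\setminus F_y(v)$ layer by layer in ${\rm DSG}(K_8,M)$ (which has four layers of four vertices each), and then carry out a short case analysis on how the two $7$-element sets intersect.

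First I would pin down $S_x$. Since $L(x)=\{c_x\}$, Lemma~\ref{8oneEdgeRestriction}~\ref{8-P2-0.5} gives $|S_x|=7$. Writing $c_x\in C^a$ and $s_x:=\sigma(xv)$, direct inspection of ${\rm DSG}(K_8,M)$ shows that $S_x$ is exactly the set of $s_x$-neighbors of $c_x$: in the layer $L_x$ of $c_x$ it contains a single element lying on side $\alpha_x^{\mathrm{in}}:=-as_x$ (the in-layer $s_x$-neighbor of $c_x$, namely its $M$-pair if $s_x=-$, or the inverse of the pair if $s_x=+$), while in each of the three remaining layers it contains exactly two elements, both lying on side $\alpha_x := as_x$. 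The analogous description holds for $S_y$ using $b$, $s_y$, $\alpha_y$, $\alpha_y^{\mathrm{in}}$.

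If $L_x\ne L_y$, each of the two layers outside $\{L_x,L_y\}$ contributes two elements of $S_x$ on side $\alpha_x$ and two of $S_y$ on side $\alpha_y$. When $\alpha_x=\alpha_y$ these agree and produce four elements of $S_x\cap S_y$ across two different layers. When $\alpha_x\ne\alpha_y$, that is $\alpha_y=-\alpha_x=\alpha_x^{\mathrm{in}}$, the singleton of $S_x$ in $L_x$ shares its side with the pair of $S_y$ in $L_x$ and hence belongs to $S_x\cap S_y$; the symmetric statement holds in $L_y$. Either way, two colors of $S_x\cap S_y$ sit in different layers.

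If $L_x=L_y=:L_1$, each of the three layers outside $L_1$ again holds two elements of $S_x$ on side $\alpha_x$ and two of $S_y$ on side $\alpha_y$. When $\alpha_x=\alpha_y$ the intersection contains two colors in each such layer, far more than needed. When $\alpha_x\ne\alpha_y$ nothing survives outside $L_1$, and the singletons of $S_x$ and $S_y$ inside $L_1$ lie on opposite sides since $\alpha_x^{\mathrm{in}}=-\alpha_y^{\mathrm{in}}$, so in fact $S_x\cap S_y=\emptyset$. Translating $L_x=L_y$ together with $\alpha_x\ne\alpha_y$, that is $ab\cdot s_x s_y=-$, into English yields exactly the two configurations ``different sides with positive $P_3$'' and ``same side with negative $P_3$'' listed in the statement. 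The delicate point, and the step most likely to hide a sign slip, is tracking which side of $L_x$ the singleton $S_x\cap L_x$ occupies as a function of both $a$ and $s_x$; this bookkeeping is what separates the generic situation from the two excluded cases.
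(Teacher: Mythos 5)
Your proof is correct. The paper states this corollary without any proof (treating it as immediate from Lemma~\ref{8oneEdgeRestriction}), and your argument supplies exactly the direct verification that is intended: you identify $C\setminus F_x(v)$ as the $7$-set of $\sigma(xv)$-neighbours of $c_x$ in ${\rm DSG}(K_8,M)$ (one in-layer element on side $-as_x$, two elements on side $as_x$ in each of the other three layers), do the same for $y$, and then the four-way case analysis on whether $L_x=L_y$ and whether $\alpha_x=\alpha_y$ correctly shows that the intersection contains two colours in distinct layers except precisely when $L_x=L_y$ and $ab\cdot s_xs_y=-$, which translates exactly into the two excluded configurations of the statement.
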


We note that in the two special cases  $(P_3, \sigma)$ admits no $L$-coloring.

Next we list a set of forbidden configurations in the minimum counterexample $(G, \sigma)$.

\begin{lemma}\label{8onevertex}
	The signed graph $(G,\sigma)$ does not contain the following vertices: $1$-vertex, $2_1$-vertex, $3_1$-vertex, $4_3$-vertex and $5_5$-vertex.
\end{lemma}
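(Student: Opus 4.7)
I would prove Lemma~\ref{8onevertex} by showing each listed configuration is reducible: assume $(G,\sigma)$ is a minimum counterexample containing the configuration, delete a small set of vertices around it, use minimality to color the remaining signed graph (which still satisfies $\mad<3$), and extend. In ${\rm DSG}(K_8,M)$ every vertex has $7$ positive and $7$ negative neighbors and a unique non-neighbor (its inverse), so extensions enjoy ample slack. The main tools are Lemma~\ref{8oneEdgeRestriction} together with the observation that for any $c_x\in C$ and any signs $(\sigma_1,\sigma_2)$, the set of colors reachable from $c_x$ via a signed $2$-edge path (i.e., through a vertex of degree $2$) has size at least $14$, and the two colors it misses form a same-layer pair. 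I would verify this by a single representative computation using the vertex-transitivity of ${\rm DSG}(K_8,M)$.

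The easy cases reduce to direct counting. A $1$-vertex $v$ has $7$ extensions once its unique neighbor is colored. For a $5_5$-vertex $v$, deleting $v$ and its five $2$-neighbors $u_i$ and fixing the far endpoints $c_{x_i}$, the intersection of the five reach sets misses at most $5\cdot 2=10$ colors, leaving at least $6$ valid $c_v$; each $c_{u_i}$ is then chosen independently. The $4_3$-vertex is similar, with fourth neighbor $w$: three reach sets jointly miss at most $6$ colors, and intersecting with $\mathrm{Nbr}(c_w,\sigma(vw))$ of size $7$ yields at least $10+7-16=1$ valid $c_v$. For the $2_1$-vertex $v$ with $2$-neighbor $u$ (other endpoint $x$) and other neighbor $w$, after deleting $\{u,v\}$ and rooting the resulting tree at $v$: Lemma~\ref{8oneEdgeRestriction}~\ref{8-P2-0.5} gives $L^a(u)$ as a neighbored $7$-set, then part~\ref{8-P2-2.5} gives $|F_u(v)|\leq 2$, and with $|F_w(v)|=9$ we have $|L^a(v)|\geq 16-2-9=5>0$.

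The hardest case is the $3_1$-vertex $v$ with $2$-neighbor $u$ (far endpoint $x$) and other neighbors $w_1,w_2$. After deleting $\{u,v\}$ and fixing $c_x,c_{w_1},c_{w_2}$ we need $c_v\in R_x\cap S$, where $R_x$ is the reach set from $c_x$ through $u$ and $S=\mathrm{Nbr}(c_{w_1},\sigma(vw_1))\cap\mathrm{Nbr}(c_{w_2},\sigma(vw_2))$. A direct enumeration, performed after switching at $v$ to make both edges positive and setting $c_{w_1}=1^+$ by transitivity, shows $|S|\in\{0,2,4,6,7\}$. When $|S|\geq 2$ a $2$-element $S$ is always cross-layer and cross-side, whereas the pair missed by $R_x$ is a same-layer pair, so $R_x\cap S\neq\emptyset$. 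The main obstacle is $|S|=0$, which happens precisely when $c_{w_2}$ is the inverse or pair-inverse of $c_{w_1}$. To handle it I would use a local recoloring at $w_1$ or $w_2$: since each has degree at least $3$ in $G$, it has at least two other colored neighbors in $G-\{u,v\}$, and its admissible list typically contains colors outside the $2$-element bad set; in the extremal subcase where both admissible lists coincide with their respective bad pairs, I expect to derive a contradiction using the earlier forbidden-configuration arguments or the global bound $\mad(G)<3$.
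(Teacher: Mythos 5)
Your counting for the $1$-, $2_1$-, $4_3$- and $5_5$-vertices is sound and matches what the paper dismisses as almost direct corollaries of Lemma~\ref{8oneEdgeRestriction}. The gap is in the $3_1$-vertex case, which is precisely the one case the paper writes out in full. You correctly identify that after deleting $\{u,v\}$ the set $S=\mathrm{Nbr}(c_{w_1},\sigma(vw_1))\cap\mathrm{Nbr}(c_{w_2},\sigma(vw_2))$ can be empty (exactly in the two exceptional situations of Corollary~\ref{8-path3twoolored}), but your resolution of this case --- ``a local recoloring at $w_1$ or $w_2$'' followed by ``I expect to derive a contradiction'' --- is not an argument. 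Nothing prevents $w_1$ and $w_2$ from having large degree and rigid constraints from their other colored neighbors that force the bad pair, and neither the reducible configurations established up to this point nor the bound $\mad(G)<3$ by itself rules that out. As written, the hardest case of the lemma is unproved.

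The paper's proof avoids the problem with a small but essential change of reduction: delete only the $2$-vertex $u$, not $v$. By minimality $(G-u,\sigma)$ admits an edge-sign preserving homomorphism $\phi$ to ${\rm DSG}(K_8,M)$, and since $v$ is still present in $G-u$ and adjacent there to $v_1$ and $v_2$, the color $\phi(v)$ witnesses $S\neq\emptyset$; Corollary~\ref{8-path3twoolored} then upgrades this to the statement that $S$ contains two colors lying in different layers. Now uncolor $v$, reinsert $u$ with list $C\setminus F_{u_1}(u)$ (a neighbored $7$-set), and observe via Lemma~\ref{8oneEdgeRestriction}~\ref{8-P2-2.5} that the $u$-branch forbids only a paired $2$-set at $v$, i.e., two colors in a single layer; such a set cannot cover two colors from different layers, so $v$ can be colored from $S$ and then $u$ can be colored. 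If you want to salvage your version, the fix is exactly this: keep $v$ in the colored subgraph and let the minimal counterexample itself certify that $S\neq\emptyset$, rather than trying to repair an arbitrary coloring of $G-\{u,v\}$ afterwards.
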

\begin{proof}
		
	We only prove the case of a $3_1$-vertex, the remaining cases being almost direct corollary of the Lemma~\ref{8oneEdgeRestriction}. Suppose to the contrary that $v$ is a $3_1$-vertex in $G$. Let $u$ be the $2$-neighbor of $v$, let $v_1,v_2$ be the other two neighbors of $v$, and let $u_1$ be the second neighbor of $u$, see Figure~\ref{8FC3_1-3}.
	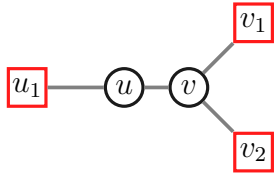
\begin{figure}[h]
		\centering
		\begin{tikzpicture}[>=latex,
		roundnode/.style={circle, draw=black!90, very thick, minimum size=5mm, inner sep=0pt},
		roundnode2/.style={circle, draw=black!90, very thick, minimum size=5mm, inner sep=0pt},
		squarednode/.style={rectangle, draw=red!90, very thick, minimum size=5mm, inner sep=0pt}, scale=0.86
		] 
		\node [roundnode] (u) at (2,1){$u$};
		\node [roundnode] (v) at (3,1){$v$};
		\node [squarednode] (u1) at (0.5,1){$u_1$};
		\node [squarednode](v3) at (4,0){$v_2$};
		\node [squarednode](v1) at (4,2){$v_1$};
		\draw [line width =1.4pt, gray] (u)--(u1);
		\draw [line width =1.4pt, gray, -] (u)--(v);
		\draw [line width =1.4pt, gray, -] (v)--(v1);
		\draw [line width =1.4pt, gray, -] (v)--(v3);
		\draw [line width =1.4pt, gray] (v1);
		\end{tikzpicture}
		\caption{A $3_1$-vertex with its neighbors.}
		\label{8FC3_1-3}
	\end{figure}
	Let $G'$ be the graph obtained from $G$ by removing $u$ and let $H$ be the subgraph induced by $\{u,v\}$. By the minimality of $(G, \sigma)$, there is an edge-sign preserving homomorphism $\phi$ of $(G',\sigma)$ to ${\rm DSG}(K_8, M)$. Since $\phi(v)$ exists, the two exceptions of Corollary~\ref{8-path3twoolored} cannot be the case here and, therefore, $C\setminus (F_{v_1}(v) \cup F_{v_2}(v))$ contains two colors which are in different layers. Let $\phi'$ be the restriction of $\phi$ on $(G, \sigma)-(H, \sigma)$ and let $L$ be an associated list assignment of $(H, \sigma)$. Now we shall show that $(H, \sigma)$ is $L$-colorable, where $L(u)=C\setminus F_{u_1}(u)$ is a neighbored $7$-set, $L(v) =C\setminus (F_{v_1}(v) \cup F_{v_2}(v))$. By Lemma \ref{8oneEdgeRestriction} \ref{8-P2-2.5}, $F_{u}(v)$ is a paired $2$-set, thus $L^a(v) = L(v)\setminus F_{u}(v) \neq \emptyset$, a contradiction.
\end{proof}

\begin{proof}(Of Theorem \ref{main-theorem22}.) By discharging method, we assign an initial charge of $$c(v)=d(v)-3$$ at each vertex $v$ of $(G, \sigma)$. Then by the assumption on the average degree we have $\sum_{v \in V(G)}c(v)<0$. We apply only the following discharging rule:

\medskip
{\bf Discharging rule} Every $2$-vertex receives $\frac{1}{2}$ from each of its two neighbors.
\medskip

It is straightforward to check that all vertices have non-negative charges after applying this rule, a contradiction with the fact that the total charge is a negative value.
\end{proof} 

\section{Tightness and planarity}\label{sec:examples}

In this section, first we will give several examples to show the tightness of our theorems. Noting that our examples are, in particular, signed planar graphs which imply that the conditions of the no-homomorphism lemma is not sufficient for mapping signed planar graph to $(K_{3,3}, M)$, $(K_6, M)$ and $(K_8, M)$ while it is sufficient for some other signed graphs such as $(K_4,\empty)$, $(K_{4,4},M)$. We then apply our results to planar graph with further structural conditions, and propose further direction of study.

\subsection{Tightness and examples}

The first example, given in Figure~\ref{fig:sharp6}, shows the bound of $\frac{14}{5}$ in Theorem~\ref{main-theorem} is sharp.
 
\begin{figure}[htpb]
	\begin{minipage}[t]{.5\textwidth}
	\centering
	\begin{tikzpicture}[>=latex,
	roundnode/.style={circle, draw=black!90, very thick, minimum size=5mm, inner sep=0pt}, scale=0.86
	] 
	\node [roundnode] (u) at (0,0){$u$};
	\node [roundnode] (v) at (4,0){$v$};
 
	\node [roundnode](x) at (2,0.7){$x$};
	\node [roundnode](y) at (2,1.7){$y$};
 
	\node [roundnode](w) at (2,3){$w$}; 
	\draw [line width =1.4pt, blue] (y)--(w)--(v)--(u)--(w);
	\draw [line width =1.4pt, blue] (v)--(x)--(u);
 	\draw [dashed, line width =1.4pt, red] (x)--(y);
	\end{tikzpicture}
	\caption{$\mad(G, \sigma)=\frac{14}{5}$}
	\label{fig:sharp6}
	\end{minipage}
		\begin{minipage}[t]{.5\textwidth}
		\centering
		\begin{tikzpicture}
		[>=latex,	
		roundnode/.style={circle, draw = black, very thick, minimum size=4mm, inner sep=0pt},
		bluenode/.style={circle, draw=black!90, very thick, minimum size=3mm, inner sep=0pt}, scale=0.86
		] 
		\centering
		\foreach \i in {1,2,3,4}
		{
			\draw[rotate=-90*(\i-2)+45] (0, 2) node[roundnode] (u_\i){};
		}
		\foreach \i in {1,2,3,4}
		{
			\draw[rotate=-90*(\i-2)+45] (0, 0.8) node[roundnode] (v_\i){};
		}
		
		\foreach \i/\j in {1/2,2/3,3/4}
		{
			\draw  [line width=1.4pt, blue] (u_\i) -- (u_\j);
		}
		\foreach \i/\j in {1/2,2/3,4/1}
		{
			\draw  [line width=1.4pt, blue] (v_\i) -- (v_\j);
		}
		\foreach \i in {1,3,4}
		{
			\draw  [line width=1.4pt, blue] (u_\i) -- (v_\i);
		}
		\foreach \i/\j in {4/1}
		{
			\draw  [dashed, line width=1.4pt, red] (u_\i) -- (u_\j);
		}
		\foreach \i/\j in {3/4}
		{
			\draw  [dashed, line width=1.4pt, red] (v_\i) -- (v_\j);
		}
		\foreach \i in {2}
		{
			\draw  [dashed, line width=1.4pt, red] (v_\i) -- (u_\i);
		}
		
		\end{tikzpicture}
		\caption{A signed graph does not map to $(K_{3,3}, M)$}
		\label{fig:bipartiteK33}
		\end{minipage}
\end{figure}

\begin{proposition}
	There exists a signed graph with maximum average degree $\frac{14}{5}$ which does not admit a homomorphism to $(K_6, M)$.
\end{proposition}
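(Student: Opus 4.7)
The plan is to verify that the $5$-vertex, $7$-edge signed graph of Figure~\ref{fig:sharp6} has $\mad=\tfrac{14}{5}$ and admits no homomorphism to $(K_6,M)$.

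First, a direct count gives $\deg(u)=\deg(v)=\deg(w)=\deg(x)=3$ and $\deg(y)=2$, so $|E(G)|=7$ and the average degree is $\tfrac{14}{5}$. The densest proper subgraph is the $K_4$ minus the edge $wx$ induced by $\{u,v,w,x\}$, with $5$ edges and average degree $\tfrac{5}{2}<\tfrac{14}{5}$; every subgraph on fewer than four vertices is visibly sparser. Hence $\mad(G)=\tfrac{14}{5}$.

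Suppose, for contradiction, that $\phi\colon (G,\sigma)\to (K_6,M)$ is a homomorphism. The key structural observation is that every positive triangle of $(K_6,M)$ avoids the perfect matching $M$: a triangle cannot contain two matching edges (a matched vertex has a unique partner), and a triangle with exactly one matching edge is negative. Hence the three vertices of any positive triangle of $(K_6,M)$ lie in three distinct matching pairs. Now the triangles $uvw$ and $uvx$ of $(G,\sigma)$ are positive (all three edges are positive) and share the positive edge $uv$, so their images are positive triangles sharing the edge $\phi(u)\phi(v)$. Therefore $\phi(u)$ and $\phi(v)$ lie in two different matching pairs, and both $\phi(w)$ and $\phi(x)$ must belong to the remaining (third) matching pair.

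To finish I exploit the $4$-cycle $C=u\,w\,y\,x$, which has exactly one negative edge ($xy$) and is therefore negative in $(G,\sigma)$; its image under $\phi$ must be a negative closed walk of length $4$ in $(K_6,M)$. If $\phi(w)=\phi(x)$, then the image of $C$ is $\phi(u)\,\phi(w)\,\phi(y)\,\phi(w)\,\phi(u)$, traversing each of the edges $\phi(u)\phi(w)$ and $\phi(w)\phi(y)$ exactly twice, so its sign product is $+$. Otherwise $\phi(w)$ and $\phi(x)$ are matching partners; writing $a=\phi(u)$, $c=\phi(w)$, $c'=\phi(x)$ and $d=\phi(y)$, the adjacency of $y$ with both $w$ and $x$ forces $d\notin\{c,c'\}$, so $d$ lies in a third matching pair and none of $ac$, $cd$, $dc'$, $c'a$ is a matching edge. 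Hence the image $a\,c\,d\,c'\,a$ of $C$ has sign $+$. In either case the image of $C$ is positive, contradicting the fact that $\phi$ preserves the sign of closed walks and $C$ is negative.

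The only nontrivial step is the pair-assignment deduction forced by two positive triangles sharing an edge; once that is in place, the contradiction is delivered by a short two-case analysis on the image of the negative $4$-cycle through $y$.
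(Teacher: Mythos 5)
Your proof is correct, and it reaches the contradiction by a different route than the paper. The paper first invokes Theorem~\ref{thm:Sign-preserving} to replace the homomorphism by an edge-sign preserving one for some switching-equivalent signature $\sigma'$, argues that the two triangles force all five of their edges to be positive in $\sigma'$ (so that exactly one of $xy$, $yw$ is negative), sends the negative edge to the matching edge $12$, and concludes because $\{3,4,5,6\}$ induces no positive triangle. You instead stay entirely with the switch-homomorphism definition: from the fact that a positive triangle of $(K_6,M)$ must have its three vertices in three distinct matching pairs, the two triangles glued along $uv$ force $\phi(w)$ and $\phi(x)$ into the same pair, and then the negative $4$-cycle $uwyx$ is shown to have a positive image in both subcases ($\phi(w)=\phi(x)$, or $\phi(w)$ and $\phi(x)$ matched), contradicting preservation of signs of closed walks. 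Your version buys self-containedness (no appeal to the switching reduction, no normalization of $\sigma'$ or of the image of $xy$) and localizes the contradiction in a single closed walk; the paper's version is shorter on the case analysis and fits the edge-sign-preserving framework used throughout the rest of the paper. One cosmetic remark: in your second subcase the phrase ``$d$ lies in a third matching pair'' overstates what you need and what you know --- $d=\phi(y)$ could lie in the same pair as $a=\phi(u)$; all that is required, and all that follows from $d\notin\{c,c'\}$ together with $a$ being in a different pair from $c$, is that none of the four edges $ac$, $cd$, $dc'$, $c'a$ is a matching edge, which is exactly the conclusion you then draw. The explicit verification that $\mad(G)=\tfrac{14}{5}$ is a welcome addition the paper leaves implicit.
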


\begin{proof}
The signed graph of Figure~\ref{fig:sharp6} is an example of a signed graph of maximum average degree $\frac{14}{5}$ which we prove to not admit any homomorphism to $(K_6, M)$.
Suppose to the contrary that $(G, \sigma)$ admits a homomorphism to $(K_6, M)$ (illustrated in Figure~\ref{fig:K6}). By Theorem~\ref{thm:Sign-preserving}, there exists a switching-equivalent signature $\sigma'$ such that $(G, \sigma')$ admits an edge-sign preserving homomorphism to $(H, \pi)$. 
Observe that a positive triangle with two negative edges does not admit an edge-sign preserving homomorphism to $(K_6, M)$. Thus considering the triangles $uvw$ and $uvx$, all their edges must be positive in $\sigma'$. Hence, only one of $xy$ or $yw$ is negative. Considering the symmetry of $xy$ and $yw$, we may assume $\sigma'$ is the signature given in the figure. By symmetries of $(K_6,M)$, we may assume $xy$ is mapped to $12$. Then none of the other three vertices can map to $1$ or $2$. But then there is no positive triangle induced by $\{3, 4, 5, 6\}$ to map them to.
\end{proof}

With regard to mapping signed bipartite planar graphs to $(K_{3,3}, M)$ and $(K_{4,4},M)$, the existence of a (simple) signed bipartite planar graph all whose mapping to $(K_{4,4},M)$ are onto, is followed from a general construction of \cite{NSS16}. However, in this special case, we have a smaller example of Figure~\ref{fig:bipartiteK33}. In this example, any pair of vertices in the same part of the bipartition belongs to a negative $4$-cycle, and thus identifying any such pair would create a negative $2$-cycle. Hence, any mapping of this signed bipartite graph to $(K_{4,4}, M)$ is onto. Thus it does not map to its subgraph $(K_{3,3}, M)$.

%
%

Finally, noting that Theorem~\ref{main-theorem22} implies any graph of maximum average degree less than 3 maps to $(K_{2k}, M)$ for $k\geq 4$, we show that the conditions of maximum average degree cannot be improved for any value of $k$. Our examples, depicted in Figure~\ref{fig:couterK8}, are built from a negative cycle on each of whose edge we build a positive triangle.

\begin{figure}[h]
	\centering
	\centering
	\begin{tikzpicture}[>=latex,	
	roundnode/.style={circle, draw = black, very thick, minimum size=4mm, inner sep=0pt},
	bluenode/.style={circle, draw=black!90, very thick, minimum size=3mm, inner sep=0pt}, scale=0.86
	] 
	\node [roundnode] (A) at (0:2cm){};
		\node[bluenode] (AB) at (18:2.67cm){};
	
	\node [roundnode] (B) at (36:2cm){};
		\node[bluenode] (BC) at (54:2.67cm){};
	\node [roundnode] (C) at (72:2cm){};
		\node[bluenode] (CD) at (90:2.67cm){};
	\node [roundnode] (D) at (108:2cm){};
		\node[bluenode] (DE) at (126:2.67cm){};
	\node [roundnode] (E) at (144:2cm){};
		\node[bluenode] (EF) at (162:2.67cm){};
	\node [roundnode] (F) at (180:2cm){};
		\node[bluenode] (FG) at (198:2.67cm){};
	\node [roundnode] (G) at (216:2cm){};
		\node[bluenode] (GH) at (236:2.67cm){};
	\node [roundnode] (H) at (252:2cm){};
	\node [roundnode] (I) at (288:2cm){};
		\node[bluenode] (IJ) at (306:2.67cm){};
	\node [roundnode] (J) at (324:2cm){};
		\node[bluenode] (JA) at (342:2.67cm){};
	
	\foreach \t in {18,54,90,126,162,198,234,306,342}{	
		\node at (\t:2.2cm){$+$};}
	\node at (0:0cm){$-$};
			
	\draw[line width = 1.4pt, gray] (A)--(B)--(C)--(D)--(E)--(F)--(G)--(H);
	
	\draw[line width = 1.4pt, gray] (A)--(AB)--(B)--(BC)--(C)--(CD)--(D)--(DE)--(E)--(EF)--(F)--(FG)--(G)--(GH)--(H);
	
	\draw[line width = 1.4pt, gray] (I)--(IJ)--(J)--(JA)--(A);

	\draw[line width = 1.4pt, dotted, gray] (H)--(I);
	\draw[line width = 1.4pt, gray]	(I)--(J)--(A);
	\end{tikzpicture}
	\caption{A tight example $(G, \sigma)$}
	\label{fig:couterK8}
\end{figure}
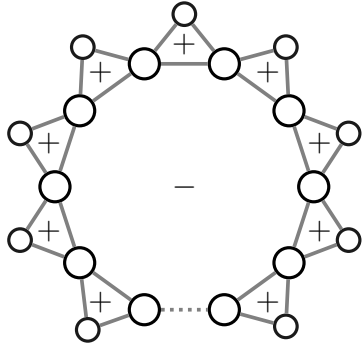

\begin{proposition}\label{thm:K8tight}
The signed graph $(G_{l}, \sigma)$, built from a negative $l$-cycle by adding a positive triangle on each edge, does not map to $(K_{2k}, M)$. 
\end{proposition}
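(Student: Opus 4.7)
I would argue by contradiction, using Theorem~\ref{thm:Sign-preserving} to pass to an edge-sign preserving setting. Suppose $(G_l,\sigma)\to(K_{2k},M)$; then there exists a signature $\sigma'$ switching-equivalent to $\sigma$ together with a mapping $\phi:(G_l,\sigma')\spto (K_{2k},M)$. Since switching preserves the signs of closed walks, in $\sigma'$ each triangle $T_i=v_iv_{i+1}w_i$ remains positive and the $l$-cycle $C=v_1v_2\cdots v_l$ remains negative.

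The key local observation is that, because $T_i$ is positive in $\sigma'$, the product of the signs of the two triangle edges $v_iw_i$ and $w_iv_{i+1}$ equals the sign of the base edge $v_iv_{i+1}$. I would then show that every base edge $v_iv_{i+1}$ must be positive in $\sigma'$. Assume to the contrary that $v_iv_{i+1}$ is negative in $\sigma'$; then $v_iw_i$ and $w_iv_{i+1}$ carry opposite signs, say $v_iw_i$ is positive and $w_iv_{i+1}$ is negative. Under the edge-sign preserving map $\phi$, negative edges must go to $M$. Thus $\phi(v_iv_{i+1})\in M$ forces $\phi(v_i)=\mathrm{Pair}(\phi(v_{i+1}))$, and $\phi(w_iv_{i+1})\in M$ forces $\phi(w_i)=\mathrm{Pair}(\phi(v_{i+1}))$. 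Since pairing is unique in $(K_{2k},M)$, this yields $\phi(w_i)=\phi(v_i)$, contradicting the fact that $v_iw_i$ is an edge (its image has to be an edge of $K_{2k}$, hence has distinct endpoints).

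Consequently every edge of $C$ is positive in $\sigma'$, so $C$ is a positive cycle in $(G_l,\sigma')$. But switching preserves the sign of every cycle, so $C$ must still be negative in $\sigma'$, contradicting what we just derived.

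The argument is structural and short; the only step that needs any care is the rigidity deduction in the second paragraph, where one must use that the negative edges of $(K_{2k},M)$ form a perfect matching (so each vertex has a unique pair). I don't anticipate a genuine obstacle, and the same proof works uniformly for every $k\geq 3$ and every $l\geq 3$.
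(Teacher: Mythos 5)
Your proposal is correct and follows essentially the same route as the paper: pass to an edge-sign preserving homomorphism via Theorem~\ref{thm:Sign-preserving}, note that the negative $l$-cycle forces some base edge to be negative in the switched signature, and derive a contradiction from the resulting triangle with two negative edges. Your ``rigidity'' step is just an explicit unfolding of the paper's one-line observation that $(K_{2k},M)$ admits no edge-sign preserving image of a triangle with two negative edges, since two such edges share a vertex while $M$ is a matching.
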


\begin{proof}
The proof is based on the fact that $(K_{2k}, M)$ (for any given $k$) has no triangle with two negative edges. Suppose to the contrary that there exists a homomorphism of $(G_{l}, \sigma)$ to $(K_{2k}, M)$. Then, by Theorem~\ref{thm:Sign-preserving}, there exists a switching-equivalent signature $\sigma'$ and an edge-sign preserving homomorphism of $(G_{l}, \sigma')$ to $(K_{2k}, M)$. But then at least one edge of the negative $l$-cycle is assigned a negative sign by $\sigma'$ and then the triangle on this edge has two negative edges.
\end{proof}

Observe that $(G_{l},\sigma)$ is a signed planar graph that satisfies the conditions of no-homomorphism lemma with respect to $(K_{2k},M)$. We note that mapping signed bipartite planar graphs to $(K_{2k},M)$ is equivalent to mapping them to $(K_{k,k},M)$, and that mapping to the latter is a strengthening of the Four-Color Theorem as stated in Theorem~\ref{thm:4CTStrengthened}. Thus we would like to raise the following question:

\begin{problem}
For which values of $g_{01},g_{10},g_{11}$, the condition of $g_{ij}(G, \sigma)\geq g_{ij}$, $ij\in \{01,10,11\}$ would imply a mapping of signed planar graph $(G, \sigma)$ to $(K_{8}, M)$?
\end{problem}

\subsection{Application to planar graphs}

Applying Euler's formula to planar graphs, one concludes that any planar graph of girth at least $7$ has an average degree strictly less than $\frac{14}{5}$, and, since girth condition is a hereditary property, the same holds for the maximum average degree. Thus we have:
 
\begin{corollary}
If $G$ is a planar graph of girth $7$, then for every signature $\sigma$, $(G,\sigma)$ admits a homomorphism to $(K_6, M)$. 
\end{corollary}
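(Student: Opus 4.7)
The plan is to derive the maximum average degree bound from Euler's formula and then invoke Theorem~\ref{main-theorem} directly; there is essentially no difficulty beyond a standard calculation.

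First I would observe that $\mad(G)$ is controlled by the girth of $G$. Let $H$ be any subgraph of $G$ with at least one edge. If $H$ is acyclic, then $|E(H)|\leq |V(H)|-1$, so the average degree of $H$ is strictly less than $2<\frac{14}{5}$. Otherwise $H$ contains a cycle, and since every cycle of $H$ is a cycle of $G$, the girth of $H$ is at least $7$. Moreover $H$ is planar as a subgraph of a planar graph.

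Next I would apply Euler's formula to such an $H$. Writing $n=|V(H)|$, $m=|E(H)|$ and $f$ for the number of faces in a planar embedding, we have $n-m+f=2$. Since every face is bounded by at least $7$ edges and each edge lies on at most two faces, $2m\geq 7f$, hence $f\leq \tfrac{2m}{7}$. Substituting into Euler's formula gives
\[
2=n-m+f\leq n-m+\tfrac{2m}{7}=n-\tfrac{5m}{7},
\]
so $m\leq \tfrac{7}{5}(n-2)$ and therefore the average degree of $H$ satisfies $\tfrac{2m}{n}\leq \tfrac{14(n-2)}{5n}<\tfrac{14}{5}$. Taking the supremum over all subgraphs yields $\mad(G)<\tfrac{14}{5}$.

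Finally, since $\mad(G)<\tfrac{14}{5}$, Theorem~\ref{main-theorem} applies to $(G,\sigma)$ for any signature $\sigma$, producing the desired homomorphism to $(K_6,M)$. The only step requiring any care is the acyclic case (handled above), since the Euler-formula computation implicitly assumes a face of length at least $7$ exists; all remaining steps are immediate.
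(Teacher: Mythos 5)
Your proposal is correct and follows the same route as the paper: the paper also deduces $\mad(G)<\frac{14}{5}$ from Euler's formula together with the fact that girth is hereditary, and then invokes Theorem~\ref{main-theorem}. You simply spell out the computation (including the acyclic case) that the paper leaves implicit.
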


We do not know if $7$ is the best possible girth condition in this result. On the other hand, the following restatement of Gr\"otzsch's theorem suggests a different approach.

\begin{theorem}[Gr\"otzsch's theorem restated]
Given a triangle-free planar graph $G$, the signed bipartite (planar) graph $S(G)$ maps to $(K_6,M)$. 
\end{theorem}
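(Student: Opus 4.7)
The plan is to derive this statement as a direct consequence of Gr\"otzsch's classical theorem combined with Corollary~\ref{coro:X->K_{2k}}. First, by Gr\"otzsch's theorem, every triangle-free planar graph $G$ satisfies $\chi(G)\leq 3$. Then, applying Corollary~\ref{coro:X->K_{2k}} with $k=3$, the inequality $\chi(G)\leq 3$ is equivalent to the existence of a homomorphism $S(G)\to (K_{6},M)$. Chaining these two equivalences yields the desired statement immediately.

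The only subtlety worth commenting on is the chain of reductions hidden inside Corollary~\ref{coro:X->K_{2k}}. That corollary is obtained by combining Theorem~\ref{thm:X->S(G)}, which characterizes $k$-colorability of $G$ via the homomorphism $S(G)\to (K_{k,k},M)$, with Theorem~\ref{thm:K_{k,k}toK_{2k}}, which, using bipartiteness of $S(G)$, shows that mapping a signed bipartite graph to $(K_{k,k},M)$ is equivalent to mapping it to $(K_{2k},M)$. Both ingredients have been established earlier in the paper, and Gr\"otzsch's theorem is a classical external input, so there is no new technical content to produce here.

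Consequently, there is essentially no obstacle: the proof is a one-line invocation of the established framework. The role of the statement in the paper is expository rather than technical, namely, to make explicit that the classical $3$-coloring theory of triangle-free planar graphs translates verbatim into a homomorphism statement for signed bipartite planar graphs, thereby motivating the Steinberg-type questions raised for this class in the surrounding discussion.
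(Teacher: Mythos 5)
Your proof is correct and takes exactly the route the paper intends: the statement is presented there as an immediate restatement obtained by feeding Gr\"otzsch's theorem ($\chi(G)\leq 3$ for triangle-free planar $G$) into Corollary~\ref{coro:X->K_{2k}} with $k=3$, which in turn is just Theorem~\ref{thm:X->S(G)} combined with Theorem~\ref{thm:K_{k,k}toK_{2k}}. Nothing further is needed.
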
 

In this reformulation, $S(G)$ contains negative 4-cycles, but it has no $6$-cycle. Furthermore, if $G$ is assumed to be of girth $5$, then $S(G)$ will contain no $8$-cycle either. This calls for a study in the line of Steinberg's conjecture \cite{S93} who proposed that planar graphs with no cycle of length $4,5,6$ are $3$-colorable. The conjecture is recently disproved in \cite{CHKLS17}. However, some supporting results have been proved earlier, most notable one being the result of \cite{BGRS05} which shows if cycles of length $4,5,6,7$ are not subgraphs of a planar graph $G$, then $G$ is 3-colorable.

Thus it is natural to ask:

\begin{problem}\label{conj:StrengtheningGro}
What is the smallest value of $k$, $k\geq 3$, such that every signed bipartite planar graph with no $4$-cycles sharing an edge and no cycles of length $6, 8, \ldots, 2k$, admits a homomorphism to $(K_{6}, M)$ (or equivalently to $(K_{3,3}, M)$)?
\end{problem} 
	
That such a $k$ exists follows from Theorem~\ref{main-theorem}. Indeed for $k\geq 14$ such a planar graph, by Euler's formula, will have a maximum average degree strictly less than $\frac{14}{5}$. If $k=4$ works, then this would be an strengthening of Gr\"otzsch's theorem.
	
We note that if a signed bipartite planar graph has no $C_4$, then it maps to $(K_{6}, M)$. This is a recent result of Naserasr, Pham and Wang and is based on the $4$-color theorem. 

For further study on this direction we refer to a recent work of \cite{XJ20}. In this work replacing $3$-coloring problem with homomorphism (of graphs) to $C_{2k+1}$, authors consider the question of when forbidden cycles of length $1,2, \ldots, 2k$, $2k+2, \ldots, f(k)$  and planarity imply a mapping to $C_{2k+1}$. They conclude that this is only possible when $2k+1$ is a prime number.  A natural analogue question is to ask the same for negative even cycles when signed bipartite planar graphs are considered.

\begin{figure}[h]
	\centering
	\begin{tikzpicture}
	[>=latex,	
	roundnode/.style={circle, draw = black, very thick, minimum size=4mm, inner sep=0pt},
	bluenode/.style={circle, draw=black!90, very thick, minimum size=3mm, inner sep=0pt},
	] 
	\centering
		\draw(-1, 0) node[roundnode] (x){};
		\draw(1, 0) node[roundnode] (y){};
		\draw[blue, line width =1pt](x) edge[bend right] (y);
		\draw[dashed, red, line width =1pt](x) edge[bend left] (y);
		\draw [dashed, line width=1pt, red] (x) .. controls (-1.8, 0.6) and (-1.8,-0.6) .. (x);
	
	\end{tikzpicture}
	\caption{A signed multi-graph $(D, \pi)$ on two vertices}
	\label{fig:digon}
\end{figure}
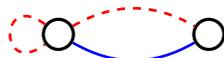

Finally we note that, if in place of $(K_6,M)$, we consider homomorphism target that are allowed to have loops or parallel edge (of different signs), then it is a restatement of the result of \cite{MRS16} that every signed planar graph of girth at least $5$ admits a homomorphism to the signed graph $(D, \pi)$ of Figure~\ref{fig:digon}.  An analogue to Steinberg question then is considered in \cite{HL18} where it is proved that every signed planar graph without cycles of length $4, 5,6,7, 8$ admits a homomorphism to $(D, \pi)$.\\

{\bf Acknowledgment.} This work is supported by the ANR (France) project HOSIGRA (ANR-17-CE40-0022). It has also received funding from the European Union's Horizon 2020 research and innovation program under the Marie Sklodowska-Curie grant agreement No 754362. It is partially supported by ARRS Projects P1-0383, J1-1692,  BI-FR/19-20-PROTEUS-001 and BI-CN/18-20-015. The Grant Number of the fourth author is NSFC 11871439, and he is also supported by Fujian Provincial Department of Science and Technology(2020J01268).

\bibliographystyle{acm}
\bibliography{homoK6}

\end{document}